\numberwithin{equation}{section}
\numberwithin{figure}{section}
\theoremstyle{plain}
\newtheorem{thm}{\protect\theoremname}[section]
\theoremstyle{definition}
\newtheorem{problem}[thm]{\protect\problemname}
\theoremstyle{plain}
\newtheorem{cor}[thm]{\protect\corollaryname}
\theoremstyle{plain}
\newtheorem{conj}[thm]{\protect\conjecturename}
\theoremstyle{definition}
\newtheorem{defn}[thm]{\protect\definitionname}
\theoremstyle{plain}
\newtheorem{lem}[thm]{\protect\lemmaname}
\providecommand{\corollaryname}{Corollary}
\providecommand{\conjecturename}{Conjecture}
\providecommand{\definitionname}{Definition}
\providecommand{\lemmaname}{Lemma}
\providecommand{\problemname}{Problem}
\providecommand{\theoremname}{Theorem}
\begin{document}
\title{Ramsey numbers of sparse digraphs}
\author{Jacob Fox\thanks{Department of Mathematics, Stanford University, Stanford, CA 94305, USA. Email: \url{jacobfox@stanford.edu}. Research supported by a Packard Fellowship and by NSF award  DMS-185563.} \and Xiaoyu He\thanks{Department of Mathematics, Stanford University, Stanford, CA 94305, USA. Email: \url{alkjash@stanford.edu}. Research supported by NSF GRFP Grant DGE-1656518.} \and Yuval Wigderson\thanks{Department of Mathematics, Stanford University, Stanford, CA 94305, USA. Email: \url{yuvalwig@stanford.edu}. Research supported by NSF GRFP Grant DGE-1656518.}}
\date{}

\maketitle
\global\long\def\va{\overrightarrow{va}}%

\global\long\def\RR{\mathbb{R}}%

\global\long\def\QQ{\mathbb{Q}}%

\global\long\def\HH{\mathbb{H}}%

\global\long\def\E{\mathbb{E}}%

\global\long\def\Var{\operatorname{Var}}%

\global\long\def\CC{\mathbb{C}}%

\global\long\def\NN{\mathbb{N}}%

\global\long\def\ZZ{\mathbb{Z}}%

\global\long\def\GG{\mathbb{G}}%

\global\long\def\BB{\mathbb{B}}%

\global\long\def\DD{\mathbb{D}}%

\global\long\def\cL{\mathcal{L}}%

\global\long\def\supp{\operatorname{supp}}%

\global\long\def\one{\boldsymbol{1}}%

\global\long\def\range#1{\left[#1\right]}%

\global\long\def\d{\operatorname{d}}%

\global\long\def\falling#1#2{\left(#1\right)_{#2}}%

\global\long\def\f{\mathbf{f}}%

\global\long\def\im{\operatorname{im}}%

\global\long\def\sp{\operatorname{span}}%

\global\long\def\rank{\operatorname{rank}}%

\global\long\def\sign{\operatorname{sign}}%

\global\long\def\mod{\operatorname{mod}}%

\global\long\def\id{\operatorname{id}}%

\global\long\def\disc{\operatorname{disc}}%

\global\long\def\lindisc{\operatorname{lindisc}}%

\global\long\def\tr{\operatorname{tr}}%

\global\long\def\adj{\operatorname{adj}}%

\global\long\def\Unif{\operatorname{Unif}}%

\global\long\def\Po{\operatorname{Po}}%

\global\long\def\Bin{\operatorname{Bin}}%

\global\long\def\Ber{\operatorname{Ber}}%

\global\long\def\Exp{\operatorname{Exp}}%

\global\long\def\Geom{\operatorname{Geom}}%

\global\long\def\sat{\operatorname{sat}}%

\global\long\def\Hom{\operatorname{Hom}}%

\global\long\def\vol{\operatorname{vol}}%

\global\long\def\comp{\operatorname{comp}}%

\global\long\def\depth{\operatorname{depth}}%

\global\long\def\floor#1{\left\lfloor #1\right\rfloor }%

\global\long\def\ceil#1{\left\lceil #1\right\rceil }%

\global\long\def\cond{\,\middle|\,}%

\let\polishL\L

\global\long\def\L{\mathcal{L}}%

\DeclareRobustCommand{\L}{\ifmmode{\mathcal{L}}\else\polishL\fi}

\global\long\def\randS{\boldsymbol{S}}%

\global\long\def\S{\mathcal{S}}%

\global\long\def\Sm#1{\mathcal{S}_{#1}}%

\global\long\def\ord{\mathcal{O}}%

\global\long\def\ordm#1{\mathcal{O}_{#1}}%

\global\long\def\cordm#1#2{\mathcal{O}_{#2}^{#1}}%

\global\long\def\cSm#1#2{\mathcal{S}_{#2}^{#1}}%

\global\long\def\ext#1{\mathcal{O}^{\mathrm{ext}}\left(#1\right)}%

\global\long\def\randN{\boldsymbol{N}}%

\global\long\def\randNt{\boldsymbol{N_{2}}}%

\global\long\def\randNk{\boldsymbol{N_{k}}}%

\global\long\def\randX{\boldsymbol{X}}%

\global\long\def\coG{\overline{G}}%

\global\long\def\Sext#1{\mathcal{S}^{\mathrm{ext}}\left(#1\right)}%

\global\long\def\randl{\boldsymbol{\lambda}}%

\global\long\def\randz{\boldsymbol{z}}%

\global\long\def\randR{\boldsymbol{R}}%

\global\long\def\randL{\boldsymbol{L}}%

\global\long\def\randQ{\boldsymbol{Q}}%

\global\long\def\TT{\textnormal{TT}}%

\global\long\def\ovar#1{\overrightarrow{#1}}%
\usetikzlibrary{patterns}

\tikzstyle{vertex} = [draw, circle, fill = black!5, minimum size=11pt, inner sep=1pt]
\tikzstyle{medium vertex} = [draw, circle, fill = black!5, minimum size=15pt, inner sep=0pt]
\tikzstyle{small vertex} = [draw, circle, fill = black!5, minimum size=5pt, inner sep=0.5pt]
\tikzstyle{small yellow vertex} = [draw, circle, fill = yellow!50, minimum size=5pt, inner sep=0.5pt]
\tikzstyle{large dashed vertex} = [draw=blue!30, circle, minimum size=42.5pt, inner sep=0pt, line width=0.25mm, dashed]
\tikzstyle{enormous dashed vertex} = [draw=black!50, circle, minimum size=80pt, inner sep=0pt, line width=0.25mm, dashed]
\tikzstyle{large dashed red vertex} = [draw=red!50, fill=red!20, circle, minimum size=40pt, inner sep=0pt, line width=0.5mm, dashed]
\tikzstyle{black edge} = [draw, line width=1.2pt, ->, black]
\tikzstyle{black thin undirected edge} = [draw, line width=0.6pt, -, black]
\tikzstyle{black undirected edge} = [draw, line width=1.2pt, -, black]

\tikzstyle{red edge} = [draw, line width=1.2pt, ->, red]
\tikzstyle{red undirected edge} = [draw, line width=1.2pt, -, red]
\tikzstyle{blue undirected edge} = [draw, line width=1.2pt, -, blue]
\tikzstyle{blue thick undirected edge} = [draw, line width=3pt, -, blue]

\tikzstyle{red dashed ellipse} = [draw=red!50, circle, x radius=5mm, y radius=20mm, fill=red!20, line width=0.5mm, dashed]

\begin{abstract}
Burr and Erd\H{o}s in 1975 conjectured, and Chv\'atal, R\"odl, Szemer\'edi and Trotter later proved, that the Ramsey number of any bounded degree graph is linear in the number 
of vertices. In this paper, we disprove the natural directed analogue of the Burr--Erd\H os conjecture, answering a question of Buci\'c, Letzter, and Sudakov. If $H$ is an acyclic digraph, the \emph{oriented Ramsey number} of $H$, denoted $\ovar{r_{1}}(H)$,
is the least $N$ such that every tournament on $N$ vertices contains
a copy of $H$. We show that for any $\Delta \geq 2$ and any sufficiently large $n$, there exists
an acyclic digraph $H$ with $n$ vertices and maximum degree $\Delta$ such
that 
\[
\ovar{r_{1}}(H)\ge n^{\Omega(\Delta^{2/3}/ \log^{5/3} \Delta)}.
\]
This proves that $\ovar{r_{1}}(H)$ is not always linear in the number of vertices for bounded-degree $H$.
On the other hand, we show that $\ovar{r_{1}}(H)$ is nearly linear in the number of vertices for typical bounded-degree acyclic digraphs $H$, and obtain linear or nearly linear bounds for several natural families of bounded-degree acyclic digraphs. 

For multiple colors, we prove a quasi-polynomial upper bound $\ovar{r_{k}}(H)=2^{(\log n)^{O_{k}(1)}}$
for all bounded-degree acyclic digraphs $H$ on $n$ vertices, where $\ovar{r_k}(H)$ is the least $N$ such that every $k$-edge-colored tournament on $N$ vertices contains a monochromatic copy of $H$. For $k\ge2$ and $n\geq 4$, we exhibit an acyclic digraph $H$ with $n$ vertices and maximum degree $3$ such that $\ovar{r_{k}}(H)\ge n^{\Omega(\log n/\log\log n)}$, showing that 
these Ramsey numbers can grow faster than any polynomial in the number of vertices. 

\end{abstract}

\section{Introduction}

The $k$-color Ramsey number of a (simple undirected) graph $H$, denoted $r_k(H)$, is the minimum $N$ such that every $k$-edge-coloring of the complete graph $K_N$ contains a monochromatic copy of $H$. Broadly speaking, the main question in graph Ramsey theory is to understand how $r_k(H)$ depends on $H$ and $k$. The most well-studied case is that of two colors, $k=2$. For $H$ on $n$ vertices, it is known \cite{Conlon, Sudakov} that $r_2(H)$ grows exponentially in $n$ if and only if $H$ has $\Omega(n^2)$ edges.

However, it has long been observed that the Ramsey number of a sparse graph $H$ is much smaller than exponential in $|V(H)|$. In their foundational paper on the topic, Burr and Erd\H os \cite{BuEr} conjectured that this phenomenon is quite general and that any sparse graph has linear Ramsey number. Here, the appropriate notion of sparsity is \emph{degeneracy:} $H$ is said to be $d$-degenerate if every subgraph of $H$ has a vertex of degree at most $d$, and the degeneracy of $H$ is the minimum $d$ such that $H$ is $d$-degenerate. Burr and Erd\H os conjectured that $r_k(H) = O_{k,d}(n)$ for any $n$-vertex graph $H$ with degeneracy $d$. Here and throughout we use the standard asymptotic notation where the implicit constant is allowed to depend only on the subscripts of $O(\cdot)$. Major progress towards this conjecture was made by Chv\'atal, R\"odl, Szemer\'edi, and Trotter \cite{ChRoSzTr}, who proved the Burr--Erd\H os conjecture under the stronger assumption that $H$ has bounded degree (rather than bounded degeneracy), that is, that $r_k(H)=O_{k,\Delta}(n)$ for any $n$-vertex graph $H$ with maximum degree $\Delta$. Finally, building on many prior developments (e.g.\ \cite{FoSu, KoSu}), the full Burr--Erd\H os conjecture was proved by Lee \cite{Lee} in 2017.

There are many analogous questions and results for directed graphs (henceforth \emph{digraphs}). We assume all digraphs are simple and oriented, so they do not contain self-loops, parallel edges or anti-parallel edges. For a digraph $H$, define the
$k$-color \emph{oriented Ramsey number} $\ovar{r_{k}}(H)$ to be the minimum
$N$ such that any $k$-edge-colored tournament on $N$ vertices contains
a monochromatic copy of $H$. Note that if $H$ contains a directed
cycle, then $H$ does not appear in any transitive tournament, so
$\ovar{r_{k}}(H)$ only exists for acyclic $H$. Henceforth,
we work exclusively with acyclic digraphs $H$.

Unlike undirected Ramsey numbers, oriented Ramsey
numbers are interesting even in the case of one color, $k=1$, where
$\ovar{r_{1}}(H)$ is simply the minimum $N$ such that any tournament on
$N$ vertices contains a copy of $H$. Let $\TT_{n}$ denote the transitive
tournament on $n$ vertices. The study of oriented Ramsey numbers
was initiated by Stearns \cite{Stearns} in 1959 and Erd\H os and
Moser \cite{ErMo} in 1964, who showed the upper and lower bounds,
respectively, in 
\begin{equation}
2^{n/2-1}\le \ovar{r_{1}}(\TT_{n})\le2^{n-1}.\label{eq:transitive-tournament}
\end{equation}
The exponential constants in these bounds have not been improved, similar to the classical case of the diagonal undirected Ramsey number $r_2(K_n)$ \cite{Conlon,Sah,Spencer}. This may not be surprising, given that $\ovar{r_{1}}(\TT_{n})\le r_2(K_{n})$. Thus, improving the lower bound in (\ref{eq:transitive-tournament})
is at least as difficult as improving the lower bound on $r_2(K_{n})$.

Somewhat more is known about the oriented Ramsey number $\ovar{r_{k}}(H)$
when $H$ is sparse. When $H=P_{n}$ is the directed path on
$n$ vertices, Chv\'atal \cite{Chvatal} and Gy\'arf\'as and Lehel
\cite{GyLe} determined that $\ovar{r_{k}}(P_{n})=(n-1)^{k}+1$ using
the Gallai--Hasse--Roy--Vitaver theorem \cite{Gallai, Hasse, Roy, Vitaver}. In the case of one color, it
was more generally conjectured by Sumner in 1971 that for any oriented tree $T$
on $n\ge2$ vertices, $\ovar{r_{1}}(T)\le2n-2$. Sumner's conjecture has
received a considerable amount of attention over the years (see e.g.\
\cite{DrHa,ElSahili,HagTh,Havet,HavTh,KuMyOs,Thomason}); it was proven for $n$ sufficiently large by K\"uhn,
Mycroft, and Osthus \cite{KuMyOs}, and Dross and Havet \cite{DrHa} showed that
$\ovar{r_{1}}(T)\le \frac{21}{8}n-\frac{47}{16}$ for all $n\ge2$. In more colors, it was shown by
Buci\'c, Letzter, and Sudakov \cite{BuLeSu} that $\ovar{r_{k}}(T)= O_{k}(|V(T)|^{k})$
for any oriented tree $T$ and all $k\ge1$. In the same paper, they
asked a natural directed analogue of the classical Burr--Erd\H os
problem. 
\begin{problem}[\cite{BuLeSu}]
\label{prob:directed-burr-erdos} Is it true that $\ovar{r_{1}}(H)=O_{\Delta}(n)$
for every acyclic digraph $H$ with $n$ vertices and maximum degree
$\Delta$?
\end{problem}

Here we write $N^{+}(v)$ for the
out-neighborhood and $N^{-}(v)$ for the in-neighborhood of a vertex
$v\in V(H)$, and say that a digraph $H$ has maximum degree $\Delta$ if $\max_{v\in V(H)}(|N^{+}(v)|+ |N^{-}(v)|) = \Delta$.

Yuster \cite{Yuster} recently initiated the study of the special
case of Problem \ref{prob:directed-burr-erdos} when $H=P_{n}^{\ell}$ is the
$\ell$-th power of a directed path $P_{n}$, which is the digraph on vertex set $[n]$ whose edges are the ordered pairs $(i,j)$ satisfying $1\le j-i \le \ell$.
 This case was recently settled by Dragani\'c et al.\ \cite{Draganicetal}, who showed that $\ovar{r_{1}}(P_{n}^{\ell})=O_{\ell}(n)$.
Letting the \emph{bandwidth} of an acyclic digraph $H$ on $n$ vertices be the minimum $\ell$
such that $P_{n}^{\ell}$ contains a copy of $H$, this aforementioned result
implies that $\ovar{r_{1}}(H)=O_{\ell}(n)$ if $H$ has $n$ vertices and bandwidth $\ell$.

In this paper, we answer Problem \ref{prob:directed-burr-erdos} in
the negative, and show that in fact $\ovar{r_{1}}(H)$ can grow faster than
any fixed power of $n$, as long as the maximum degree is a sufficiently large constant.
\begin{thm}
\label{thm:general-lower-bound}For any $\Delta\ge2$ and $n$ sufficiently large in terms of $\Delta$, there exists
an acyclic digraph $H$ on
$n$ vertices and maximum degree at most $\Delta$ for which
\[
\ovar{r_{1}}(H)\geq n^{\Omega(\Delta^{2/3}/\log^{5/3}\Delta)}.
\]
\end{thm}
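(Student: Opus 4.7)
Our plan is to use the probabilistic method to construct, for each $\Delta\ge 2$ and $n$ sufficiently large in terms of $\Delta$, an acyclic digraph $H$ on $n$ vertices of maximum degree at most $\Delta$ whose oriented Ramsey number is at least $N=n^{\Omega(\Delta^{2/3}/\log^{5/3}\Delta)}$.

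The candidate $H$ will be a random layered digraph. We partition $[n]$ into $\ell$ consecutive layers of equal size, and between each pair of consecutive layers we choose a random bipartite digraph (oriented from the earlier layer to the later one) in which every vertex has in-degree and out-degree exactly $d$, so that $H$ is automatically acyclic with maximum degree $2d\le\Delta$. The two parameters $\ell$ and $d$ will be chosen at the end to optimize the bound.

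To show $\ovar{r_1}(H)>N$, we prove that a uniformly random tournament $T$ on $N$ vertices contains no copy of $H$ with positive probability. A pure first-moment bound yields only $N\le 2^{O(\Delta)}$, independent of $n$, so the argument must go beyond it. The strategy is to embed $H$ layer by layer: given a partial embedding of the first $i$ layers, the image of each vertex $v\in L_{i+1}$ must lie in the common out-neighborhood of the images of the $d$ in-neighbors of $v$, a set whose size in a random tournament concentrates around $N\cdot 2^{-d}$ by Chernoff. Because the edges of $H$ were chosen randomly, the candidate sets for different vertices of $L_{i+1}$ behave almost independently; iterating this across all $\ell$ layers and handling the correlations via a second-moment or Lov\'asz Local Lemma argument gives a multiplicative saving over the first-moment estimate. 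This saving is strong enough to push $\log N$ up to order $(\Delta^{2/3}/\log^{5/3}\Delta)\log n$ after optimization.

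The main technical obstacle is controlling the correlations between layer-by-layer embedding attempts and between the two independent sources of randomness (in $H$ and in $T$). The unusual exponent $\Delta^{2/3}/\log^{5/3}\Delta$ is expected to emerge from a three-way balance between the per-layer degree $d$, the number of layers $\ell$, and the logarithmic slack needed for the per-layer concentration estimates to hold uniformly over all layers. An alternative path we regard as equally plausible, given the paper's abstract, is to derive Theorem~\ref{thm:general-lower-bound} from the multi-color lower bound on $\ovar{r_k}(H_0)$ via a gadget that encodes $k$ colors as $O(k)$ additional edges per vertex, and then optimizes $k$ as a function of $\Delta$.
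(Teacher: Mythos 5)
Your proposal contains a fatal conceptual error: a uniformly random tournament $T$ on $N$ vertices contains every fixed bounded-degree acyclic digraph $H$ on $n$ vertices with high probability once $N\geq C_\Delta n$ for a suitable constant $C_\Delta$. This is because a random tournament is pseudorandom---by Chernoff plus a union bound, it contains no $(1-c)$-dense pair of linear size---and then the contrapositive of the paper's own Lemma~\ref{lem:dense-pair} (greedy embedding) forces a copy of $H$. Consequently, the quantity you want to bound, $\Pr[\text{random }T\text{ contains no copy of }H]$, tends to $0$ for \emph{every} choice of $H$, including your random layered one, so there is no positive probability to exploit, and no second-moment or Lov\'asz Local Lemma refinement can rescue the argument: those tools would at best show copies of $H$ \emph{do} exist. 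In short, Ramsey lower bounds in this bounded-degree regime cannot be obtained from random tournaments; a structured adversarial tournament is essential. There is a second, independent difficulty: your proposed $H$ (random $d$-regular bipartite digraphs between consecutive layers) is \emph{graded} in the paper's sense, and Theorem~\ref{thm:graded} caps $\ovar{r_1}(H)\leq h^{10\Delta\log\Delta}n$ in terms of the height $h$, which severely restricts how large the Ramsey number of such a graph can be unless the number of layers is itself polynomial in $n$---a sign that your $H$ lacks the crucial multiscale structure.

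The paper takes a fundamentally different route. The hard-to-embed digraph is an explicit, deterministically constructed \emph{interval mesh} (Lemma~\ref{lem:interval-mesh}): a bounded-degree acyclic digraph on $[n]$ with at least one edge between every pair of nearby intervals of comparable length, at every dyadic scale simultaneously. The adversarial tournament is a lexicographic power $T=R^m$, where $R$ is a small tournament on $r=2^{\Omega(t)}$ vertices with no transitive subtournament of size $t$ (the Erd\H{o}s--Moser bound is the only place randomness enters, and only to build the small base tournament $R$). The heart of the argument (Lemma~\ref{lem:walk-reduction}) shows that any embedding $H\hookrightarrow R^m$ induces, via projection to the first lexicographic coordinate, a highly constrained \emph{$(R,f,s)$-walk} on $R$; the multiscale structure of the interval mesh forces such a walk to be short (Lemmas~\ref{lem:Rfs} and~\ref{lem:Rfs-specific}). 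Peeling off one lexicographic coordinate at a time yields $m\gtrsim\log n/\log t$, hence $|T|=r^m\geq n^{\Omega(t/\log t)}$, and the exponent $\Delta^{2/3}/\log^{5/3}\Delta$ emerges from optimizing the parameter $t$ against the degree bound $\Delta=\Theta(t^{3/2}\log t)$ required by the greedy interval-mesh construction. The alternative reduction you float---encoding $k$ colors as extra edges in a one-color gadget---is also not what the paper does and would require inventing a nontrivial gadget that preserves both acyclicity and bounded degree; no such reduction appears in the paper.
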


The power $\Delta^{2/3}$ seems to be best possible with our method,
but the power of $\log\Delta$ can be improved. Although the answer to Problem \ref{prob:directed-burr-erdos} is negative, we prove an almost linear upper bound on $\ovar{r_1}(H)$ for almost all $H$, in the following sense. Define $\overrightarrow{G}(n,d)$
to be the orientation of the random regular graph $G(n,d)$ on vertex
set $[n]$ with all edges pointing to the right.
\begin{thm}
\label{thm:G(n,d)-almost-linear}If $d\ge2$ is fixed and $H=\overrightarrow{G}(n,d)$,
then w.h.p.\footnote{As usual, we say that an event $\mathcal E$ happens \emph{with high probability (w.h.p.)}\ if $\Pr(\mathcal E) \to 1$ as $n\to \infty$.}\ (as $n\rightarrow\infty$)
\[
\ovar{r_{1}}(H)\leq n(\log n)^{4\log d}.
\]
\end{thm}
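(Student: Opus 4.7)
Proof plan. Our approach is a block-based randomized greedy embedding, combining a median ordering of $T$ with typical structural properties of $H=\ovar{G}(n,d)$.

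Setup. Set $N := n(\log n)^{4\log d}$ and fix an arbitrary tournament $T$ on $N$ vertices. Choose a \emph{median ordering} $\sigma=(w_1,\ldots,w_N)$ of $V(T)$, i.e., an ordering maximizing the number of forward edges; a standard swap argument gives the key property that for every $i<j$, the vertex $w_i$ has at least $(j-i)/2$ out-neighbors among $\{w_{i+1},\ldots,w_j\}$. Partition $V(T)$ along $\sigma$ into $n$ consecutive blocks $I_1,\ldots,I_n$, each of size $L:=(\log n)^{4\log d}$. Let $v_1,\ldots,v_n$ denote the natural topological order on $V(H)$, and attempt to embed $v_i$ into $I_i$ by the following greedy randomized procedure: at step $i$, form the candidate set
\[
C_i \;=\; \bigl\{u\in I_i : \phi(v_j)\to u\text{ in }T\text{ for every }v_j\in N_H^-(v_i)\bigr\},
\]
and pick $\phi(v_i)\in C_i$ uniformly at random, failing if $C_i=\emptyset$.

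Analysis. For a fixed $u\in I_i$, the independence of the earlier embedding choices gives $\Pr[u\in C_i]=\prod_{v_j\in N_H^-(v_i)}|N^-(u)\cap I_j|/L$, and the median property guarantees that for each $u$ the average of $|N^-(u)\cap I_j|/L$ over $j<i$ is at least $1/2$. Leveraging typical properties of $H$---each in-neighborhood $N_H^-(v_i)$ has size at most $d$ and hits a well-distributed set of earlier blocks---a careful entropy/moment estimate yields $\E[|C_i|]\ge L^{1-o(1)}$. Concentration then follows from an Azuma--Hoeffding martingale applied to the sequence of embedding decisions, giving $|C_i|\ge 1$ with failure probability $O(n^{-2})$ at step $i$. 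A union bound over the $n$ steps shows the embedding succeeds with probability $1-o(1)$. Crucially, the dependence on $T$ enters only through the median ordering and the block structure, while the typical properties used of $H$ are independent of $T$, so the same analysis applies verbatim to every tournament on $N$ vertices, yielding $\ovar{r_1}(H)\le N$ w.h.p.\ over $H$.

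Main obstacle. The chief technical hurdle is the lower bound on $\E[|C_i|]$: the median property only controls the \emph{average} of $|N^-(u)\cap I_j|/L$ across $j$, so a handful of blocks could in principle be pathological, forcing $\Pr[u\in C_i]$ to vanish for many $u$ simultaneously. Ruling out such simultaneous pathologies requires exploiting the randomness of $H$: the in-neighborhood $N_H^-(v_i)$ is itself a spread-out random subset, and \emph{a priori} unlikely to fall entirely within the small set of ``bad'' blocks for any fixed $u$. Quantitatively, the $(\log n)^{4\log d}$ slack is tuned so that even after worst-case multiplicative shrinkage across all (at most $d$) in-neighbors, enough of $I_i$ survives to guarantee $|C_i|\ge 1$; indeed, an alternative route via a recursive halving of $H$'s edge set---grouping edges into $O(\log d)$ ``layers'' and embedding layer-by-layer with a polylogarithmic loss per layer---suggests the same bound and transparently explains the origin of the exponent $4\log d$.
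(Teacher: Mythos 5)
Your proposal takes a genuinely different route from the paper, but it has a serious gap at exactly the step you flag as the ``main obstacle,'' and I do not believe that gap can be closed as described.

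The paper does not use a median ordering at all. Its proof of this theorem proceeds via the forest-partition machinery built in Section 3: w.h.p.\ $\ovar{G}(n,d)$ admits a directed partition into $O(d)$ parts each inducing a forest whose components have size $O(\log n)$ (Lemma \ref{lem:bounded-arboricity-G(n,d)}), and then Theorem \ref{thm:general-arboricity} --- which is proved by iterating the greedy-embedding skeleton lemmas --- converts that combinatorial fact into the nearly linear bound. Your plan instead blocks $T$ along a median ordering and runs a randomized greedy embedding block by block. Interestingly, the authors explicitly remark in Section \ref{sec:concluding} that median-ordering techniques, while powerful elsewhere, are precisely what they were \emph{unable} to combine with greedy embedding in this setting; this is not decisive evidence that your route is dead, but it is a warning sign.

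There are two concrete problems. First, a technical imprecision: since $\phi(v_j)$ is drawn uniformly from $C_j$, not from $I_j$, the factor in your product should be $|N^-(u)\cap C_j|/|C_j|$, not $|N^-(u)\cap I_j|/L$; the two differ exactly when $C_j$ is skewed inside $I_j$, which is the regime you need to control, and they are not independent across $j$ because each $C_j$ is itself a random set determined by prior choices. Second, and more fundamentally, the lower bound $\E[|C_i|]\ge L^{1-o(1)}$ is not established, and your proposed fix via ``the randomness of $H$'' inverts the quantifiers. To prove $\ovar{r_1}(H)\le N$ for a fixed $H$, you must succeed against \emph{every} tournament $T$; the adversary sees $H$ before choosing $T$. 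The median-ordering property only controls the in-degree of $u=w_k$ on initial segments $\{w_m,\dots,w_{k-1}\}$ --- it permits the ``bad'' blocks (where $|N^-(u)\cap I_j|/L$ is tiny) to be concentrated far from $u$, and there is nothing preventing the adversary from arranging $T$ so that, for each $i$ and each $u\in I_i$, the blocks containing $N^-_H(v_i)$ are exactly the bad ones. Ruling this out would require showing that the median-ordering constraint forces some quantitative rigidity across many $u$ simultaneously, and you do not supply such a lemma; I believe the adversary genuinely has too much freedom here (blow-up constructions of the kind used for the paper's lower bound are the natural candidates). Your parenthetical ``recursive halving of $H$'s edge set into $O(\log d)$ layers'' is much closer in spirit to the paper's actual argument (prefix labelings of depth $O(\log d)$), and that is the direction that does work.
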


It is not difficult to extend Theorem \ref{thm:G(n,d)-almost-linear} to
show $\ovar{r_{1}}(H)=n(\log n)^{O_\Delta (1)}$ w.h.p.\ if $H$ is the forward acyclic orientation
of a uniformly random graph with any fixed degree sequence $\Delta=d_{1}\ge d_{2}\ge\cdots\ge d_{n}$, and therefore also for the forward acyclic orientation of a uniformly random bounded-degree graph. We also prove a similar bound $\ovar{r_1}(H)\le n(\log n)^{O_d(1)}$ when $H = \ovar G(n,p)$ is the forward acyclic orientation of an Erd\H os--R\'enyi random graph of constant average degree $d =pn$.

Although we are able to show that $\ovar{r_{1}}(H)$ is w.h.p.\ almost linear for a random bounded-degree acyclic digraph, we have not determined the worst-case behavior of this Ramsey number. For general acyclic digraphs $H$ on $n$ vertices with maximum degree $\Delta$, the best upper bound we are able to prove is $\ovar{r_1}(H) \leq n^{O_\Delta(\log n)}$ (see Theorem~\ref{thm:multicolor-upper-bound} below). Nonetheless, we are able to prove stronger (and in some cases linear) upper bounds on $\ovar{r_1}(H)$ in case $H$ lies in certain natural families. We now give two examples of such families.

Let the \emph{height} of $H$ be the number of vertices on the longest directed path in $H$. Equivalently, the height can be seen as a directed analogue of the chromatic number: $H$ has height at most $h$ if and only if $V(H)$ can be partitioned into independent sets $S_1,\dotsc,S_h$ such that every edge between $S_i$ and $S_j$ is oriented from $S_i$ to $S_j$, for every $i<j$. For acyclic digraphs of bounded height and bounded degree, we are able to prove the following linear bound on $\ovar{r_{1}}(H)$.

\begin{thm}
\label{thm:height-ub}If $H$ is an acyclic digraph on $n$ vertices
with maximum degree $\Delta$ and height $h$, then
\[
\ovar{r_{1}}(H)\le(\Delta h)^{10\Delta\log h}n.
\]
In particular, $\ovar{r_1}(H) = O_{h, \Delta}(n)$.
\end{thm}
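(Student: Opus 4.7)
The plan is to prove the bound by strong induction on the height $h$, using a halving strategy that accounts for the $\log h$ factor in the exponent. The base case $h = 1$ is trivial, since $H$ is then edgeless and embeds into any tournament on $n$ vertices. Each inductive step should reduce from height $h$ to height $\lceil h/2 \rceil$ at a multiplicative cost of at most $(\Delta h)^{10\Delta}$; iterating $\log h$ times yields the claimed bound.

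For the inductive step, I would partition $V(H)$ according to the height layers into $V_L = S_1 \cup \cdots \cup S_{\lceil h/2 \rceil}$ and $V_R = V(H) \setminus V_L$, and let $H_L, H_R$ be the induced subdigraphs on these parts. Each has height at most $\lceil h/2 \rceil$, maximum degree at most $\Delta$, and at most $n$ vertices, so the inductive hypothesis applies. Given a tournament $T$ on $N = (\Delta h)^{10\Delta \log h} n$ vertices, the goal is to locate disjoint $A, B \subseteq V(T)$ of size at least $N / (\Delta h)^{10\Delta}$ such that $H_L$ embeds into $A$ by induction, and then $H_R$ embeds into $B$ via a strengthened inductive hypothesis admitting candidate-set constraints. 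The cleanest sufficient condition would be that every $a \in A$ has at least $(1 - 1/(10\Delta)) |B|$ out-neighbors in $B$: for any vertex $u \in V_R$ with up to $\Delta$ previously embedded in-neighbors $a_1, \ldots, a_k \in A$, the candidate set $B \cap \bigcap_{i} N^+(a_i)$ then still contains at least $9|B|/10$ vertices, so the inductive embedding of $H_R$ proceeds.

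The main obstacle is establishing this structural lemma. Strict per-vertex domination cannot hold in a quasirandom tournament, where $|N^+(a) \cap B| \approx |B|/2$ for all disjoint $A, B$ of linear size, so the domination property must be replaced by a weaker common-out-neighborhood property achievable via dependent random choice. Concretely, one samples a random subset $U \subseteq V(T)$ of carefully tuned size and sets $A = \{v : U \subseteq N^+(v)\}$; by convexity, $\mathbb{E} |A| \geq N/2^{|U|}$, and every subset $S \subseteq A$ automatically satisfies $U \subseteq \bigcap_{a \in S} N^+(a)$. The technical challenge is to upgrade this so that common out-neighborhoods of $\Delta$-subsets of $A$ fill nearly all of a large $B$, while keeping $|A|, |B| \geq N/(\Delta h)^{10\Delta}$; this likely requires iterating the DRC argument with a cleaning step that discards vertices of $B$ whose presence would violate the common-neighborhood property. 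An alternative would be to forgo the explicit split and embed $H$ into $T$ by a greedy vertex-by-vertex procedure in topological order, managing candidate sets via a potential function that amortizes shrinkage across the $h$ layers, with the $\log h$ factor arising from the amortization analysis.
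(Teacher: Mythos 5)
Your high-level plan---halve the height at each step, pay a factor of $(\Delta h)^{O(\Delta)}$ per halving, iterate $\log h$ times---matches the shape of the paper's argument, which encodes the same idea via a ``prefix coloring'' of $H$ whose codewords are the $\lceil\log h\rceil$-bit binary representations of the height layers. But the engine you propose to drive the recursion has a genuine gap, and you identify it yourself: you want disjoint $A,B$ with $|A|,|B|\ge N/(\Delta h)^{10\Delta}$ such that every $a\in A$ has out-degree at least $(1-1/(10\Delta))|B|$ into $B$, and you correctly observe that no such pair can exist in a quasirandom tournament. Dependent random choice does not rescue this: the usual DRC conclusion is that small subsets of $A$ have \emph{large} common out-neighborhoods, not that their common out-neighborhood occupies a $(1-1/(10\Delta))$-fraction of some \emph{prescribed} large set $B$ into which you also need to recursively embed $H_R$. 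You acknowledge this as ``the technical challenge,'' but there is no clear path through it as written.

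The missing idea is that one should not try to find the dense pair unconditionally; one should condition on $T$ being $H$-free. If $T$ is $H$-free, you may try to greedily embed $H$ (in a degenerate ordering) into $T$, shrinking each future candidate set by a factor $c$ whenever an embedded vertex is a neighbor; the attempt must fail at some step, and when it fails, the vertex $w$ you failed to place has a neighbor slot $j$ such that many candidates $w'$ for step $t$ have out-degree (or in-degree) less than $c|U_j|$ into $U_j$---which is exactly a $(1-c)$-dense pair of the size you want. This is an Erd\H{o}s--Hajnal-style dichotomy (the paper's Lemma 3.5), not dependent random choice, and it makes the structural lemma essentially free. The quasirandomness objection evaporates: a large quasirandom tournament is never $H$-free for bounded-degree $H$, so the case you worried about cannot arise. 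Once you have this dichotomy, the paper iterates it depth-first along the binary tree of height ranges to produce a ``skeleton'' of sets $\{V_x\}$ with $(1-\frac{1}{8\Delta^2})$-dense pairs between all pairs needed, and then a single greedy pass (the paper's Lemma 3.8) places each vertex of $H$ in the set indexed by its height-layer codeword. Your alternative sketch (greedy with a potential function) gestures at the inner stage but does not supply the outer-stage dichotomy that makes the candidate sets dense enough for the greedy pass to succeed, so it is underspecified in the same place.
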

Note that this theorem also implies the aforementioned $n^{O_\Delta(\log n)}$ upper bound on $\ovar{r_1}(H)$ for any bounded-degree acyclic digraph $H$, since the height of an acyclic digraph is at most its vertex count.

Next, we say that an acyclic digraph $H$ of height $h$ is \emph{graded}
if its vertex set can be partitioned into $h$ independent sets $S_{1},\dotsc,S_{h}$
such that every edge in $H$
is directed from some $S_{i}$ to $S_{i+1}$. Equivalently, $H$ is graded if for every pair of vertices $(u,v)$, all directed paths from $u$ to $v$ have the same length (the equivalence of the definitions follows e.g.\ from \cite[Proposition 4.4]{Lovasz}). A natural example
of a graded digraph is a grid (in any dimension) with all edges
oriented towards the first orthant. In general, a graded digraph can be obtained from any graded lattice (in the sense of partially ordered sets) $L$ by orienting every edge $x<y$ of the Hasse diagram of $L$ from $x$ to $y$. For graded digraphs of bounded degree, we are able to prove a polynomial bound on $\ovar{r_1}(H)$.
\begin{thm}\label{thm:graded}
If $H$ is a graded digraph on $n$ vertices with maximum degree $\Delta$ and height $h$, then
\[
\ovar{r_1}(H) \leq h^{10\Delta \log \Delta} n.
\]
In particular, since $h \leq n$, we have that $\ovar{r_1}(H)\leq n^{O(\Delta \log \Delta)}$.
\end{thm}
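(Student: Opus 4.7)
The plan is to embed $H$ into $T$ layer by layer, processing $S_1,S_2,\dots,S_h$ in order. The key fact we exploit, and which distinguishes graded from general bounded-height digraphs, is that whenever we place a vertex $v\in S_{i+1}$, its entire in-neighborhood has already been embedded in the single previous layer $\varphi(S_i)$. So as long as we have chosen $\varphi(S_i)$ to have good common-out-neighborhood properties with respect to a reservoir set, the extension step for layer $S_{i+1}$ will succeed. Accordingly, I would aim to construct disjoint sets $W_1,\dots,W_h\subseteq V(T)$ together with the embedding, satisfying the invariant that for every $i$ and every $\Delta$-subset $A\subseteq W_i$, the common out-neighborhood $|N^+(A)\cap W_{i+1}|$ is at least $|S_{i+1}|$. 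Once these sets are in hand, we embed $S_i$ into $W_i$ greedily: for each $v\in S_{i+1}$, the in-neighborhood $\varphi(N^-(v))\subseteq W_i$ is a $\Delta$-subset, and the invariant gives at least $|S_{i+1}|$ valid candidates in $W_{i+1}$, so an unused one exists.

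The core technical step is the construction of the $W_i$'s. I would proceed in reverse, setting $W_h$ to be a large subset of $V(T)$ and then constructing $W_{h-1},\dots,W_1$ using a dependent-random-choice (DRC) style argument: sample a random $t$-subset $Y\subseteq W_{i+1}$ and take $W_i$ to be the set of vertices in the remaining pool whose out-neighborhood contains $Y$. Any $\Delta$-subset $A\subseteq W_i$ then automatically satisfies $Y\subseteq N^+(A)\cap W_{i+1}$, so the invariant will be guaranteed as soon as $t\geq |S_{i+1}|$, while the usual penalty estimate bounds the number of ``bad'' $\Delta$-subsets (those with common out-neighborhood just barely of size $t$). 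To make the expected size of $W_i$ large, I would first reorder $V(T)$ by a median order and take the initial blocks $V_1,\dots,V_h$ to be the consecutive intervals of that median order; a standard median-order calculation shows that the bipartite tournament between $V_i$ and $V_j$ for $i<j$ has at least a constant density of forward edges, which is what drives the lower bound $\E|W_i|\geq |V_i|\cdot\rho^t$ for some absolute $\rho>0$.

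The main obstacle, and the point where the graded hypothesis matters most, is achieving the quasi-polynomial dependence $h^{10\Delta\log\Delta}$ rather than an exponential $2^{\Delta h}$. A naive DRC applied once per layer loses a factor of roughly $\Delta^{O(\Delta)}$ per layer, yielding a total $\Delta^{O(\Delta h)}$ blowup, which is far too large. To get the correct bound one should instead set things up so that the cost factors are taken $O(\log h)$ times rather than $h$ times. I would do this by a divide-and-conquer on the layers: split $H$ into its top half $H^T=H[S_1\cup\cdots\cup S_{h/2}]$ and bottom half $H^B=H[S_{h/2+1}\cup\cdots\cup S_h]$, each graded of height $h/2$, and apply the inductive hypothesis to each half. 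The two halves are coupled only through the edges from $S_{h/2}$ to $S_{h/2+1}$, and these can be handled by a single DRC step in between: before invoking the inductive hypothesis for $H^B$, use DRC to locate a subset $U^B$ of the host tournament such that every $\Delta$-subset of the already-chosen $\varphi(S_{h/2})$ has at least $|S_{h/2+1}|$ common out-neighbors in $U^B$. This single-step coupling costs a factor of $\Delta^{O(\Delta)}$, so the recursion $f(h)\leq \Delta^{O(\Delta)}\cdot f(h/2)$ unwinds to $f(h)\leq \Delta^{O(\Delta\log h)}=h^{O(\Delta\log\Delta)}$, as required.

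To make the divide-and-conquer go through, I would actually prove a slightly stronger inductive statement that permits the top layer of $H$ to be already partially embedded into prescribed ``slots'' in $V(T)$, so that the coupled DRC step can hand $\varphi(S_{h/2+1})$ to the recursive call on $H^B$ as an initial condition. This stronger form is what makes it possible to glue the two halves without paying twice at the interface, and it also handles the base case $h=1$ (where $H$ is an independent set and the statement is immediate) as well as the intermediate parameter regimes. Bookkeeping the sizes of the various subsets throughout the recursion, and checking that $\Delta^{O(\Delta)}$ slack at each level is enough to absorb the DRC losses, is the main calculational burden; the sizes $|S_i|$ and $|V(H^T)|,|V(H^B)|$ appear linearly in the bounds, which keeps the final factor $h^{O(\Delta\log\Delta)}n$ linear in $n$ as claimed.
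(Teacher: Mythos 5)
Your proposal takes a genuinely different route from the paper. The paper encodes the layers via a prefix coloring (layer $i \mapsto$ the binary representation of $i-1$, yielding depth $\lceil\log h\rceil$ and dyadic complexity $\comp(\rho)=1$ because the graded structure puts edges only between consecutive layers), constructs a nested ``skeleton'' of vertex sets via a dichotomy lemma (Lemma~\ref{lem:dense-pair}: in any $H$-free tournament, a failed greedy embedding attempt produces a large $(1-c)$-dense pair), and only at the very end embeds $H$ into the skeleton in one pass (Lemma~\ref{lem:inner-stage}). The entire density structure is built before a single vertex of $H$ is placed. Your proposal instead interleaves building and embedding---embed $H^T$, then do DRC, then embed $H^B$---and tries to extract density directly from a median order rather than from the $H$-freeness dichotomy.

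There are two concrete gaps. First, the median-order density claim does not survive subsetting: between two \emph{consecutive} intervals $V_i, V_{i+1}$ of a median order one can show forward density at least $1/4$, but the density from $V_i$ to the small subset $W_{i+1}\subseteq V_{i+1}$ produced by your DRC step can be arbitrarily small, since nothing prevents $W_{i+1}$ from concentrating on vertices with few in-neighbors from $V_i$. Second, and more fundamentally, the coupling step has an ordering problem: once the inductive hypothesis has placed $\varphi(S_{h/2})$, nothing controls the common out-neighborhoods $N^+(\varphi(N^-(w)))$ for $w \in S_{h/2+1}$---they could all be empty, in which case no choice of $U^B$ can make the intended DRC step succeed. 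Your ``stronger inductive statement'' constrains the \emph{top} layer of each recursive piece, which is the wrong side of the interface: you need control over the out-neighborhoods of the \emph{bottom} layer of $H^T$'s image, and it is not clear how to obtain that from an inductive hypothesis that only prescribes slots for $S_1$.

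The paper's skeleton construction (Lemma~\ref{lem:build-skeleton}) avoids exactly this issue by a delicate ordering device: it traverses the binary prefix tree depth-first and does not define $V_{x1}$ until \emph{after} all the sets $V_y$ with $y$ prefixed by $x0$ have been constructed, so that $V_{x1}$ can be cleaned of vertices with bad out-degree to the (few) relevant $V_y$'s; for graded $H$, exactly one descendant on each side can be adjacent across the split, which is what $\comp(\rho)=1$ encodes and what keeps the per-level loss at $\Delta^{O(\Delta)}$. That ``build the whole structure first, then embed once'' discipline is the missing ingredient; without it, the interleaved embed-then-DRC recursion does not close as described.
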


Our methods are motivated by those used by Conlox, Fox,
Lee, and Sudakov \cite{CoFoLeSu} to prove bounds on ordered Ramsey
numbers, and the two problems are especially closely related when the number of colors is at least $2$. Using this connection, we are able to give a super-polynomial
lower bound for $\ovar{r_{k}}(H)$ when $k\ge2$.
\begin{thm}
\label{thm:multicolor-lower-bound} For any $n\ge 4$, there exists an acyclic digraph $H$ on $n$ vertices with maximum degree
$3$ for which
\[
\ovar{r_{k}}(H)\ge n^{\log n/20\log\log n}
\]
for all $k \geq 2$.
\end{thm}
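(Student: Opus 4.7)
The plan is to reduce the multicolor oriented Ramsey problem to the strong lower bound on ordered Ramsey numbers proved by Conlon, Fox, Lee, and Sudakov \cite{CoFoLeSu}, by appending a directed ``spine'' to an ordered matching of high ordered Ramsey number. I would begin with an ordered matching $M$ on vertex set $[n]$ (produced probabilistically in \cite{CoFoLeSu}) whose two-color ordered Ramsey number $r_{2,\mathrm{ord}}(M)$ is at least $n^{\log n/20\log\log n}$. Since any $k$-coloring with $k\ge 2$ may be viewed as a two-coloring by leaving $k-2$ colors unused, this lower bound extends automatically: $r_{k,\mathrm{ord}}(M)\ge n^{\log n/20\log\log n}$ for all $k\ge 2$.

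From such an $M$, I would define the acyclic digraph $H$ on vertex set $[n]$ whose edges are the \emph{spine} edges $i\to i+1$ for $1\le i\le n-1$ together with the \emph{matching} edges $i\to j$ for each $\{i,j\}\in M$ with $i<j$. Each vertex is incident to at most two spine edges and at most one matching edge, so the maximum total (in- plus out-) degree of $H$ is at most $3$, as required.

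The reduction is then immediate. Let $\chi$ be a $k$-coloring of the ordered complete graph on $[N]$, with $N=r_{k,\mathrm{ord}}(M)-1$, that avoids a monochromatic ordered copy of $M$. Consider the transitive tournament $T$ on $[N]$, whose edges $i\to j$ (for $i<j$) are $k$-colored by $\chi$. For any embedding $\phi\colon V(H)\to V(T)$ of $H$ as a subdigraph of $T$, the spine forces $\phi(i)<\phi(i+1)$ for every $i$, so $\phi$ is order-preserving. Restricting $\phi$ to the matching edges of $H$ therefore yields an ordered copy of $M$ in $K_N$; if the copy of $H$ were monochromatic in some color $c$, then so would this ordered copy of $M$, contradicting the choice of $\chi$. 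Hence $T$ is a $k$-colored tournament on $N$ vertices with no monochromatic copy of $H$, yielding $\ovar{r_k}(H)>N$ and thus $\ovar{r_k}(H)\ge n^{\log n/20\log\log n}$.

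The main obstacle is extracting the precise constant $1/20$ from the probabilistic construction of \cite{CoFoLeSu}: their stated bound is of the form $n^{\Omega(\log n/\log\log n)}$, but the underlying random ordered matching argument readily yields any desired constant smaller than some fixed positive value, provided one is willing to pay in the threshold for ``sufficiently large $n$''. For small $n$ (e.g.\ $4\le n\le n_0$) the claimed lower bound is a bounded absolute constant and holds trivially, since any acyclic digraph with at least two vertices has oriented Ramsey number at least $2$.
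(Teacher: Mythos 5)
Your proposal is correct and matches the paper's argument: you build $H$ from a directed Hamiltonian spine plus a high-complexity random ordered matching and reduce $\ovar{r_k}(H)$ to the CFLS ordered Ramsey lower bound via the observation that the spine forces any embedding into a transitive tournament to be order-preserving, which is exactly the content of the paper's Lemma~\ref{lem:hamiltonian-to-ordered}. Your worry about extracting the $1/20$ constant is moot, since CFLS Theorem 2.3---quoted here as Theorem~\ref{thm:scrambled-matchings}---already states the bound with that explicit constant.
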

Thus, for acyclic digraphs of bounded degree, $\ovar{r_k}(H)$ can grow super-polynomially if $k\ge 2$.
In the other direction, for any number of colors we have the following
quasi-polynomial upper bound.
\begin{thm}
\label{thm:multicolor-upper-bound}If $k\ge1$ and $H$ is any acyclic
digraph with $n$ vertices and maximum degree $\Delta$, then
\[
\ovar{r_{k}}(H)\le2^{O_{k,\Delta}\left((\log n)^{2^{2k-1}}\right)}.
\]
\end{thm}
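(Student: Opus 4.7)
The plan is to prove the theorem by induction on $k$. For the base case $k = 1$, Theorem~\ref{thm:height-ub} with the trivial bound $h \le n$ on the height of any acyclic digraph on $n$ vertices gives
\[
\ovar{r_1}(H) \le (\Delta n)^{10 \Delta \log n}\, n = 2^{O_\Delta((\log n)^2)},
\]
which matches the claim because $2^{2 \cdot 1 - 1} = 2$.

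For the inductive step from $k-1$ to $k$, observe that the target exponent satisfies $2^{2k-1} = 4 \cdot 2^{2k-3}$, so the induction should yield a recursion of the shape $\log \ovar{r_k}(H) \lesssim (\log \ovar{r_{k-1}}(H))^4$. A natural way to achieve this is through the following reduction: given a $k$-colored tournament $T$ on $N$ vertices, find a sub-tournament $T[S]$ of size $M$ with $\log M \gtrsim C_{k, \Delta}\,(\log N)^{1/4}$ on which one of the $k$ colors has been essentially eliminated---say, the color-$k$ sub-digraph on $S$ is transitive relative to some linear ordering of $S$. Given such $S$, either the color-$k$ transitive backbone already contains a copy of $H$ (since every acyclic digraph on $n$ vertices embeds into any transitive tournament of order at least $n$), or we discard the color-$k$ edges and apply the inductive hypothesis to the remaining $(k-1)$-coloring on $T[S]$ to find a monochromatic $H$ in one of the colors $1, \ldots, k-1$. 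Taking $M \ge \ovar{r_{k-1}}(H)$ then forces $\log N \gtrsim (\log \ovar{r_{k-1}}(H))^4$, which is exactly the recursion required to propagate the claimed bound.

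The main obstacle is proving the structural lemma with the sharp exponent $1/4$. A single Erd\H{o}s--Moser-style extraction of a transitive subtournament yields only $\log M \gtrsim \log \log N$, which is far too weak. Obtaining $\log M \gtrsim (\log N)^{1/4}$ should require iterating or nesting the selection procedure---for instance, alternating between selections based on color-$k$ out-neighborhoods and in-neighborhoods of well-chosen vertices, and combining these with Erd\H{o}s--Moser-type extractions inside the resulting substructures. Getting the exponent exactly $1/4$ (rather than, say, $1/8$ or worse) is the crux of the argument, and the growth of the implicit constants $C_{k, \Delta}$ through the induction must also be tracked carefully to ensure they remain finite for each fixed $k$ and $\Delta$.
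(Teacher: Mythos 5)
Your base case is fine (and in fact gives the same bound $2^{O_\Delta(\log^2 n)}$ that the paper obtains by a different route), but the inductive step has two genuine problems.

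First, the structural lemma you identify as ``the crux'' is in fact false, not merely hard: a random $k$-coloring of a random tournament on $N$ vertices has, for any fixed color, at most $O_k(1)$ vertices inducing an acyclic monochromatic sub-digraph (already avoiding a directed triangle in that color forces $|S| = O(k)$ by a first-moment count), and at most $O_k(\log N)$ vertices on which a given color is absent entirely. Both are far below the $M \geq 2^{\Omega((\log N)^{1/4})}$ you need, so no amount of clever iterated extraction will produce such an $S$.

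Second, even granting such an $S$, the dichotomy ``either the color-$k$ backbone contains $H$, or discard color $k$ and induct'' does not go through. A color-$k$ sub-digraph that is ``transitive relative to some ordering'' need not be a complete transitive tournament; it may be very sparse, so the first alternative is not forced. And in the second alternative, discarding color-$k$ edges from a tournament leaves an \emph{incomplete} oriented graph, to which $\ovar{r_{k-1}}$ simply does not apply---the inductive hypothesis is a statement about tournaments.

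This is precisely the obstruction the paper's proof is designed to sidestep. Lemma~\ref{lem:ordered-to-oriented} transfers the problem to \emph{ordered Ramsey numbers of the complete graph}, via $\ovar{r_k}(H)\le r_<(H^+,\dotsc,H^+,H^-,\dotsc,H^-)$, where forward and backward orientations become separate colors of an undirected $K_N$. In that setting, removing one color always leaves a complete graph, and one can iterate the CoFoLeSu two-color bound $r_<(G,K_n)\le 2^{O(d\log^2(2n/d))}$ by merging the remaining colors into a single ``super-color,'' as in Theorem~\ref{thm:multi-color-ordered}. Running this with $2k$ colors produces the exponent $2^{2k-1}$. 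So while your guess for the shape of the recursion $\log \ovar{r_k} \lesssim (\log \ovar{r_{k-1}})^4$ is numerically right, the mechanism that realizes it lives in the ordered/undirected world, not inside tournaments.
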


For one color, there is still a gap between the polynomial lower bound and the super-polynomial upper bound.

We remark that there is another well-studied analogue of ordinary Ramsey numbers in the directed setting, namely the \emph{directed Ramsey number} $\overleftrightarrow{r_k}(H)$, introduced by Bermond \cite{Bermond}. This is defined as the minimum $N$ such that a monochromatic copy of $H$ exists in every $k$-coloring of the edges of $\overleftrightarrow{K_N}$, the digraph with edges in both directions between all pairs of vertices. There are similarities and differences between the two theories (see e.g.\ \cite{BuLeSu}), and several of our results on oriented Ramsey numbers can be extended to the setting of directed Ramsey numbers. We expand on these connections in our concluding remarks, Section \ref{sec:concluding}.

To conclude this introduction, we remark on an interesting phenomenon brought to light by our results. An important ``meta-question'' driving many advances in Ramsey theory asks which graph parameters roughly determine a Ramsey number. In the Ramsey theory of undirected graphs, this question has been more or less resolved, at least in a qualitative sense.  Namely, for an undirected graph $H$, the degeneracy and the number of vertices are the main parameters that determine the growth of $r_2(H)$ (we focus on the two-color case to be concrete). This is easiest to see in the lower bounds: if $H$ has $n$ vertices, then certainly $r_2(H)\geq n$. Additionally, if $H$ has degeneracy $d$, then it is a simple exercise to show that a random $2$-edge-coloring on $2^{d/2}$ vertices does not contain a monochromatic copy of $H$ w.h.p.,\ implying that $r_2(H)\geq 2^{d/2}$. Putting these two facts together, we find that
\[
    \log r_2(H) =\Omega(d +\log n).
\]
Conlon, Fox, and Sudakov \cite[Conjecture 2.16]{CoFoSu} conjectured that this bound is tight up to the implicit constant, namely that
\[
    \log r_2(H) = \Theta(d + \log n)
\]
for any graph $H$ with $n$ vertices and degeneracy $d$. Moreover, this conjecture is known to be true up to a multiplicative factor of $\log^2 d$ \cite[Theorem 3.1]{FoSu}. Because of these results, one can say that the degeneracy and the vertex count of $H$ roughly determine its Ramsey number.

For acyclic digraphs, we do not know what parameters determine the growth order of $\ovar{r_1}(H)$ (indeed, we don't even know if $\ovar{r_1}(H)$ is polynomial or super-polynomial in $n$ when $H$ has bounded degree). Nonetheless, our results indicate that one parameter, which we call ``multiscale complexity,'' is relevant. Namely, suppose we order the vertices of $H$ as $v_1,\dotsc,v_n$ so that every edge is oriented to the right, that is $v_i \to v_j$ is an edge only if $i<j$ (such an ordering is often called a \emph{topological sorting} of $H$). Under this ordering we may assign every edge $v_i \to v_j$ a \emph{length} $j-i$. Here, if $u,v\in V(H)$, we write $u\rightarrow v$
to signify that there is a directed edge from $u$ to $v$, and similarly
$u\leftarrow v$ for an edge in the other direction. 

In graphs of bounded bandwidth, every edge is short and has length $O(1)$. At the other extreme, if $H$ has bounded height, then most edges of $H$ are long and have length $\Omega(n)$. The same is true in the random model $\ovar G(n,d)$, where most edges have length $\Omega(n)$. Some other acyclic digraphs have intermediate edge-length statistics, such as the \emph{directed grid} whose vertex set is $[\sqrt{n}]^2$ and all edges are oriented towards the lexicographically larger ordered pair. For every acyclic ordering of such a grid, there are many edges in most dyadic length scales between $1$ and $\Omega(\sqrt n)$. 

Loosely, let us say that $H$ has \emph{high multiscale complexity} if, for most dyadic intervals $I_t=[2^t, 2^{t+1})$ with $0 \leq t \leq \log n$, there are many edges in $H$ whose length is in $I_t$. Conversely, if most edge lengths of $H$ are concentrated in a small number of dyadic intervals, then we loosely say that $H$ has \emph{low multiscale complexity}. At a high level, all of our upper bound results prove upper bounds on $\ovar{r_1}(H)$ in terms of $n, \Delta$, and the multiscale complexity of $H$; if the multiscale complexity is low, then these bounds are stronger, and one can prove linear, nearly linear, or polynomial bounds on $\ovar{r_1}(H)$ (depending on the precise context). Conversely, our lower bound construction for Theorem \ref{thm:general-lower-bound} is a family of digraphs which we call \emph{interval meshes}. We delay the precise definition to Section \ref{sec:Lower-bounds}, but interval meshes are in some sense designed to maximize multiscale complexity among all graphs of maximum degree at most $\Delta$. 

We stress that we have not fully solved the problem of which natural parameters determine the growth order of $\ovar{r_1}(H)$, and we think this problem deserves further research. Nonetheless, our results do make it clear that some notion of multiscale complexity is one of these parameters, and we believe this notion is fundamental. As such, we state and prove many of our technical upper bounds in greater generality than is needed to deduce our main theorems, in order to demonstrate how notions of multiscale complexity arise naturally from our techniques. 

The rest of the paper is laid out as follows. Section~\ref{sec:Lower-bounds} carries out the construction of interval meshes to prove the lower bound Theorem~\ref{thm:general-lower-bound}. Section~\ref{sec:Greedy-embedding} uses the greedy embedding technique to prove the main technical lemmas needed for the upper bounds Theorems~\ref{thm:G(n,d)-almost-linear},~\ref{thm:height-ub}, and~\ref{thm:graded}. Section~\ref{sec:upper-bounds} completes the proofs of these results, as well as a more general upper bound in terms of ``multiscale complexity.''  Using the connection to ordered Ramsey numbers, Section~\ref{sec:multicolor&ordered} proves Theorems~\ref{thm:multicolor-lower-bound} and~\ref{thm:multicolor-upper-bound}. Finally, in our concluding remarks, Section~\ref{sec:concluding}, we collect a few of the tantalizing open problems that remain in this area.

\paragraph{Notation and terminology.}
  By an \emph{embedding} $H \hookrightarrow T$, we mean an injective function $V(H) \to V(T)$ such that edges of $H$ are mapped to edges of $T$, with edge orientations preserved. We say that a digraph $T$ is $H$-free if there is no embedding $H\hookrightarrow T$. All logarithms are to base $2$.  For the sake of clarity of
presentation, we sometimes omit floor and ceiling signs when they are not crucial.

\section{Lower bounds \label{sec:Lower-bounds}}

In this section, we prove the lower bound Theorem \ref{thm:general-lower-bound},
which states that for any $\Delta\ge1$ there exists a family of acyclic
digraphs $\{H_{n}\}_{n\ge1}$ with maximum degree $\Delta$ for which
$|V(H_{n})|=n$ and $\ovar{r_{1}}(H_{n})\ge n^{\Omega(\Delta^{2/3}/\log^{5/3}\Delta)}$.
The lower bound consists of three ingredients: constructing a bounded
degree acyclic digraph $H$ that is hard to embed, constructing a
Ramsey tournament $T$ that is hard to embed $H$ into, and proving that
there is no embedding $H\hookrightarrow T$.

The next three subsections separately provide these three ingredients.
Section \ref{subsec:Interval-meshes} defines a class of digraphs
$H$ with edges ``at all scales,'' which we call \emph{interval meshes,}
and proves the existence of bounded-degree $H$ with this property.
Section \ref{subsec:Walks-in-tournaments} defines the Ramsey tournament
$T$ as a lexicographic power of a tournament $R$ with no large transitive
subtournament, and shows that embeddings $H\hookrightarrow T$ correspond
to certain highly constrained walks on $R$ which we call \emph{$(R,f,s)$-walks.}
Section \ref{subsec:Completing-the-proof} completes the proof by
showing that long $(R,f,s)$-walks do not exist, and therefore large
interval meshes $H$ cannot be embedded into small powers $T=R^{m}$. 

\subsection{Interval meshes \label{subsec:Interval-meshes}}

Our proof of Theorem \ref{thm:general-lower-bound} is motivated by
the lower bound construction for ordered Ramsey numbers proved by
Conlon, Fox, Lee, and Sudakov \cite{CoFoLeSu}. They prove a lower bound on the ordered Ramsey number of a random matching on $[n]$; see Theorem \ref{thm:scrambled-matchings} and the surrounding discussion for details. The main
property they need of the random matching is that most pairs of
long intervals have an edge between them. We need the following stronger
property of the same form for our acyclic digraph $H$.
\begin{defn}
If $f:\mathbb{N}\rightarrow\mathbb{R}_{>0}$ is a nondecreasing function,
we define an \emph{$f$-interval mesh }to be an acyclic digraph $H$
whose vertex set is an interval $I\subseteq\mathbb{N}$ such that
any edge $(i,j)\in E(H)$ has $i<j$ and for all pairs of intervals
$(a_{1},b_{1}],(a_{2},b_{2}]\subseteq I$ with $b_{1}\le a_{2}$ and
\begin{equation}
a_{2}-b_{1}\le f(\min(b_{1}-a_{1},b_{2}-a_{2})),\label{eq:distance-1}
\end{equation}
there exists an edge in $H$ between $(a_{1},b_{1}]$ and $(a_{2},b_{2}]$.
When the function $f$ is clear from context, we simply call $H$
an \emph{interval mesh.}
\end{defn}
The notion of an interval mesh is one way of formalizing the notion from the introduction of ``high multiscale complexity'', since interval meshes have many edges in every length scale.
We construct interval meshes of bounded degree using a greedy algorithm. 
\begin{lem}
\label{lem:interval-mesh}For any nondecreasing function $f:\mathbb{N}\rightarrow\mathbb{R}_{>0}$
with $S=\sum_{m\ge0}f(2^{m+2})\cdot2^{-2m}<\infty$, there exists
an $f$-interval mesh $H$ on vertex set $\mathbb{N}$ with maximum degree
at most $2S+17$. 
\end{lem}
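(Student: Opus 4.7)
The plan is to construct $H$ via a scale-by-scale greedy procedure. For each $m\ge 0$, partition $\mathbb{N}$ into the dyadic intervals $I_j^{(m)} = ((j-1)\cdot 2^m,\, j\cdot 2^m]$ for $j\ge 1$, and set
\[
K_m := \left\lceil f(2^{m+2})/2^m\right\rceil + 3.
\]
At scale $m$, for every pair of dyadic intervals $(I_j^{(m)}, I_{j+k}^{(m)})$ with $1\le k\le K_m$, I would add one edge from the former to the latter, with the specific endpoint vertices inside each dyadic chosen according to a rule I will specify below.

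To verify the interval mesh property afterward: given any two intervals $(a_1,b_1],(a_2,b_2]$ satisfying the gap condition with $L=\min(b_1-a_1,b_2-a_2)$, I would choose $m=\lfloor\log_2 L\rfloor-1$. Since $L\ge 2^{m+1}$, each of the two intervals contains a full dyadic interval of length $2^m$. A short calculation (comparing the innermost such sub-intervals, and using monotonicity of $f$) shows their gap is at most $f(L)+2^{m+1}\le (K_m-1)\cdot 2^m$, so one of the scale-$m$ edges added above lies between them.

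The main obstacle is the degree bound. A naive even distribution of the $K_m$ scale-$m$ edges from each dyadic across the $2^m$ vertices gives a per-scale contribution of $\lceil K_m/2^m\rceil \ge 1$ to some vertex's degree, and summed over infinitely many scales this blows up. The fix is to designate only $K_m$ of the $2^m$ vertices of each dyadic as \emph{active} at scale $m$ (the endpoints of scale-$m$ edges) and coordinate the active sets across scales so that each vertex $v\in\mathbb{N}$ is active at only $O(S+1)$ scales in total. Concretely, I would select active positions via residues modulo $2^m$, centered appropriately inside each dyadic (e.g.\ with the active set lying in a window around $2^{m-1}$), so that the contributions from different scales exhibit a tree-like orthogonality coming from the binary expansion of a vertex's coordinate. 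The total density of active vertices across all scales is then $\sum_m 2K_m/2^m \le 2S+O(1)$, which by the hypothesized convergence of $S$ is finite; separately handling the finitely many scales with $K_m>2^m$ (their densities each exceed $1$ and so contribute only boundedly many terms to $S$) yields the stated bound of $2S+17$. The delicate step—designing the assignment of active positions so that no vertex becomes an endpoint at many scales simultaneously—is what I expect to be the technical heart of the argument.
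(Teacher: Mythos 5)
Your construction at each scale (one edge between dyadic intervals at index-distance up to $K_m\approx f(2^{m+2})/2^m+O(1)$) matches the paper's, and your verification of the interval-mesh property is sound: picking $m$ so that $2^{m+1}\le L<2^{m+2}$ ensures each interval contains a full length-$2^m$ dyadic block, and the monotonicity of $f$ gives the needed bound on the index gap. But the degree bound is where your proposal departs from the paper, and there is a genuine gap there.

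You propose to pre-designate a coordinated family of ``active'' positions $A_m\subseteq[2^m]$ at each scale, to be chosen so that each vertex is active at only $O(S)$ many scales, and you explicitly acknowledge that this is ``what I expect to be the technical heart of the argument'' without carrying it out. The bound you compute, $\sum_m 2K_m/2^m\le 2S+O(1)$, controls only the \emph{average} number of scales at which a vertex is active, not the maximum; converting this into a pointwise bound over all of $\mathbb{N}$ requires exactly the orthogonality-across-scales design you gesture at but do not construct. Your heuristic (windows around the midpoint $2^{m-1}$, binary-expansion structure) is plausible but would need real work — e.g., a vertex whose binary digits conspire could be near the window at many scales simultaneously, and one would have to rule this out carefully, possibly with nontrivial case analysis on how $K_m$ grows and shrinks.

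The paper avoids this entirely with a cleaner idea that you might appreciate: at each step, it attaches the new edge endpoint to the \emph{currently lowest-degree vertex} of the relevant dyadic interval. It then bounds the \emph{sum of degrees} $d(I_{m,i})$ within each dyadic block after subroutine $m$ by decomposing $I_{m,i}$ into its dyadic sub-blocks from earlier scales, obtaining $d(I_{m,i})\le 2^m(2S+16)$, hence average degree below $2S+17$. Since the greedy rule only ever increments a vertex whose degree is at most the current average in its block, no vertex can ever reach degree $2S+18$. This sidesteps any need to prescribe or analyze the positions of high-degree vertices. If you want to salvage your deterministic approach you must actually construct the sets $A_m$ and prove the pointwise activity bound; otherwise I would recommend replacing that portion of the argument with the paper's lowest-degree greedy rule, which is both simpler and already sufficient for the stated constant.
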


\begin{proof}
Starting
from an empty digraph on $\mathbb{N}$, we construct $H$ by using a greedy algorithm to add edges. All edges introduced
point to the right, so the resulting digraph must be acyclic. Define
$I_{m,j}$ to be the dyadic interval $(j\cdot2^{m},(j+1)\cdot2^{m}]$
with length $2^{m}$.

Let $m\ge0$ range through the nonnegative integers. On subroutine
$m$, we iterate through all pairs $(i,j)\in\mathbb{N}_{\ge0}^{2}$
satisfying 
\begin{equation}
1\le j-i\le f(2^{m+2})\cdot2^{-m}+4\label{eq:dyadic-distance-1}
\end{equation}
and add an edge between the (currently) lowest degree vertex of $I_{m,i}$
and the (currently) lowest degree vertex of $I_{m,j}$, if an edge
does not exist between $I_{m,i}$ and $I_{m,j}$ already. Writing
$d(U)$ for the sum of the degrees of the vertices in a set $U$,
we see that for any given $i$, subroutine $m$ increases $d(I_{m,i})$
by a total of at most 
\[
2[f(2^{m+2})\cdot2^{-m}+4]=2^{-m+1}\cdot f(2^{m+2})+8.
\]

Let $H$ be the digraph produced from this infinite process. Explicitly,
$H$ is the edge union of all the graphs $H^{(m)}$, where $H^{(m)}$ is
the graph produced after subroutine $m$. It has the property that
every pair of dyadic intervals $I_{m,i}$, $I_{m,j}$ satisfying (\ref{eq:dyadic-distance-1})
has an edge between them.

We first check that $H$ has maximum degree at most $2S+17$. Since
$I_{m,i}$ is a union of $2^{m-k}$ dyadic intervals of length $2^{k}$,
we have that after subroutine $m$,
\[
d(I_{m,i})\le\sum_{k=0}^{m}2^{m-k}(2^{-k+1}\cdot f(2^{k+2})+8)\le2^{m+1}\cdot S+2^{m+1}\cdot 8=2^{m}(2S+16).
\]

In particular, the average degree in $I_{m,i}$ is less than $2S+17$
at the end of subroutine $m$. However, subroutine $m$ only adds
edges incident to vertices of $I_{m,i}$ which have at most average
degree, so no new vertex of degree at least $2S+18$ can appear on
subroutine $m$. Thus, no vertex of degree at least $2S+18$ ever appears,
as desired.

Next, we check that $H$ is an $f$-interval mesh. Suppose two intervals
$(a_{1},b_{1}]$, $(a_{2},b_{2}]$ with $b_{1}\le a_{2}$ satisfy
(\ref{eq:distance-1}) and let $m$ be the largest positive integer
such that $(a_{1},b_{1}]$ and $(a_{2},b_{2}]$ both contain dyadic
intervals of length $2^{m}$. Note that in any interval $(a,b]$ of
integers, a longest dyadic subinterval $I_{\ell,i}$ has length $2^{\ell}\in[\frac{b-a}{4},b-a]$,
so in particular $2^{m+2}\ge\min(b_{1}-a_{1},b_{2}-a_{2}).$ Next,
pick $i$ maximal and $j$ minimal such that $I_{m,i}\subseteq(a_{1},b_{1}]$
and $I_{m,j}\subseteq(a_{2},b_{2}]$.

Using (\ref{eq:distance-1}), we find
\[
j-i\le4+\frac{a_{2}-b_{1}}{2^{m}}\le4+\frac{f(\min(b_{1}-a_{1},b_{2}-a_{2}))}{2^{m}}\le4+\frac{f(2^{m+2})}{2^{m}},
\]
and so there is an edge in $H$ between $I_{m,i}$ and $I_{m,j}$,
and therefore between $(a_{1},b_{1}]$ and $(a_{2},b_{2}]$ as well.
\end{proof}
The acyclic digraphs $H_{n}$ we use are induced subgraphs on intervals of
the infinite interval mesh constructed in Lemma \ref{lem:interval-mesh},
for an appropriate choice of $f$.

\subsection{Walks in tournaments \label{subsec:Walks-in-tournaments}}

Next, we construct the large tournament $T$ which is difficult to
embed $H$ into. Again, the construction is motivated by the lower bound argument
of Conlon, Fox, Lee, and Sudakov \cite{CoFoLeSu} for ordered Ramsey
numbers, although its analysis requires new techniques.

Recall that the \emph{lexicographic product} $G\cdot H$ of two digraphs
$G$ and $H$ is the digraph on vertex set $V(G)\times V(H)$ with
an edge $(g_{1},h_{1})\rightarrow(g_{2},h_{2})$ iff $g_{1}\rightarrow g_{2}$
in $G$ or $g_{1}=g_{2}$ and $h_{1}\rightarrow h_{2}$ in $H$. Henceforth,
write $G^{m}$ for the $m$-fold lexicographic product of $G$ with
itself. Note that lexicographic powers of tournaments are tournaments. By (\ref{eq:transitive-tournament}),
there exists for any $r\ge3$ a tournament $R$ on $r$
vertices with no transitive subtournament of size $2\log r+2$.
 We
take $T=R^{m}$ and show that an interval mesh $H$ is difficult
to embed into $T$. 

To this end, let $H$ be an interval mesh. We relate embeddings $H\hookrightarrow R^{m}$
to certain constrained walks on the tournament $R$.
\begin{defn}
For a tournament $R$, a nondecreasing function $f:\mathbb{N}\rightarrow\mathbb{R}_{>0}$,
and $s\ge1$, define an $(R,f,s)$\emph{-walk }to be a sequence of
ordered pairs $\{(v_{i},a_{i})\}_{i=1}^{\ell}$ $(\ell$ possibly
infinite) where for each $i$, $v_{i}\in V(R)$, $1\le a_{i}\le s$,
$v_{i}\ne v_{i+1}$, and for any pair $i<j$ for which $v_{i}\leftarrow v_{j}$
in $R$, we have
\[
a_{(i,j)}>f(\min(a_{i},a_{j})),
\]
where $a_{I}\coloneqq\sum_{k\in I}a_{k}$ if $I$ is an interval of
integers and the empty sum equals $0$. We define the \emph{length}
of an $(R,f,s)$-walk to be $a_{[1,\ell]}$.
\end{defn}

Let $L_{R,f}(s)$ be the length of the longest $(R,f,s)$-walk if
such a maximum exists, and $+\infty$ otherwise. The next lemma reduces
Theorem \ref{thm:general-lower-bound} to showing asymptotic upper
bounds on $L_{R,f}(s)$.
\begin{lem}
\label{lem:walk-reduction}Suppose there exists $r\ge1$, a nondecreasing
$f:\mathbb{N}\rightarrow\mathbb{R}_{>0}$, and a tournament $R$ on
$r$ vertices for which $\limsup_{s\rightarrow\infty}L_{R,f}(s)s^{-1}=\alpha$.
If $H$ is an $f$-interval mesh on $n$ vertices, then
\[
\ovar{r_{1}}(H)\ge n^{\frac{\log r}{\log\alpha}-o(1)}.
\]
\end{lem}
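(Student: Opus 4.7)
The plan is to take the Ramsey tournament to be the $m$-fold lexicographic power $T = R^m$, which has $r^m$ vertices, and to show that $T$ is $H$-free whenever $n = |V(H)|$ exceeds a quantity that is essentially $\alpha^m$. Optimizing the parameter $m$ then yields the desired lower bound $\ovar{r_{1}}(H) > r^m \ge n^{\log r/\log \alpha - o(1)}$.

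The crucial step will be an embedding-to-walk reduction. Suppose $\phi \colon V(H) \to V(R^m)$ is an embedding, and write $\phi(v) = (\phi_1(v),\dots,\phi_m(v))$. Since $V(H)$ is an interval of $\mathbb{N}$ with its natural topological sort $v_1, v_2, \dots, v_n$, I collapse the sequence $\phi_1(v_1), \phi_1(v_2), \dots, \phi_1(v_n)$ into maximal runs of equal values, producing data $(u_1,a_1), \dots, (u_\ell,a_\ell)$ with $u_i \in V(R)$, $u_i \ne u_{i+1}$, and $\sum_i a_i = n$. I claim this is an $(R,f,s)$-walk for $s = \max_i a_i$. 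Indeed, for $i < j$ with $u_i \leftarrow u_j$ in $R$, the corresponding blocks $B_i, B_j$ are sub-intervals of $V(H)$ of lengths $a_i, a_j$ separated by a gap of exactly $a_{(i,j)} = \sum_{k=i+1}^{j-1} a_k$. If, for contradiction, $a_{(i,j)} \le f(\min(a_i,a_j))$, then the interval mesh property of $H$ produces an edge $v \to w$ with $v \in B_i$ and $w \in B_j$. But $\phi_1(v) = u_i \ne u_j = \phi_1(w)$, so the definition of the lexicographic product forces the edge in $R^m$ between $\phi(v)$ and $\phi(w)$ to run in the same direction as the edge between $u_i$ and $u_j$ in $R$, namely $\phi(v) \leftarrow \phi(w)$, contradicting $v \to w$. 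Hence $a_{(i,j)} > f(\min(a_i,a_j))$, confirming the walk condition.

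To finish, I set up a recursion. Let $n_m$ be the largest $n$ for which some $f$-interval mesh on $n$ vertices embeds into $R^m$ (with $n_0 = 1$, and noting $n_m \le r^m$ always). Each block $B_i$ maps under $\phi$ into the subtournament $\{u_i\} \times V(R)^{m-1} \cong R^{m-1}$, and the induced subgraph $H[B_i]$ is again an $f$-interval mesh (the mesh property is inherited by sub-intervals), so $a_i \le n_{m-1}$. Consequently the walk has length $n \le L_{R,f}(n_{m-1})$, yielding the recursion $n_m \le L_{R,f}(n_{m-1})$. Using the limsup hypothesis, for any $\epsilon > 0$ there is $s_0$ with $L_{R,f}(s) \le (\alpha+\epsilon)s$ for all $s \ge s_0$; iterating and absorbing the finitely many pre-asymptotic steps into a constant gives $n_m \le C_\epsilon (\alpha+\epsilon)^m$. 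Choosing $m$ maximal with $C_\epsilon(\alpha+\epsilon)^m < n$ shows $R^m$ is $H$-free while $m \ge (1-o(1))\log n/\log(\alpha+\epsilon)$, and hence $\ovar{r_{1}}(H) > r^m \ge n^{\log r/\log(\alpha+\epsilon) - o(1)}$; sending $\epsilon \to 0$ completes the proof. The only real obstacle is the walk-verification in the second paragraph (where the interval mesh property is used to generate the forbidden backward edge); the rest is standard asymptotic bookkeeping around the $\limsup$ hypothesis.
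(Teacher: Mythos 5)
Your proof is correct and takes essentially the same approach as the paper: both extract an $(R,f,s)$-walk from the embedding $H\hookrightarrow R^m$ by projecting onto the first coordinate and collapsing into maximal runs, and both verify the walk constraint via the interval-mesh property applied to the two offending blocks. The only difference is bookkeeping in the recursion — the paper tracks $m(n)$ (the minimum $m$ such that some $n$-vertex $f$-interval mesh embeds into $R^m$) and shows $m(n)\ge m((\alpha+\varepsilon)^{-1}n)+1$ by extracting a single large block, while you track $n_m$ (the maximum such $n$ for fixed $m$) and observe that every block has size at most $n_{m-1}$, giving the equivalent dual recursion $n_m\le L_{R,f}(n_{m-1})$.
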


\begin{proof}
For each $n\ge1$, let $m=m(n)$ be the minimum positive integer for
which there exists an $f$-interval mesh $H$ with vertex set $[n]$ and
an embedding $\phi:H\hookrightarrow R^{m}$.

Let $\pi:R^{m}\rightarrow R$ be the projection onto the first coordinate.
Consider the sequence $\{\pi\circ\phi(j)\}_{j=1}^{n}$. Consecutive terms of
this sequence may repeat, so we block the sequence into consecutive
runs of identical vertices. Suppose there are $\ell$ total runs $I_{1}\sqcup\cdots\sqcup I_{\ell}=[n]$
and run $I_{i}$ consists of $a_{i}$ repetitions of vertex $v_{i}\in V(R)$.
We claim that $\{(v_{i},a_{i})\}_{i=1}^{\ell}$ is an $(R,f,s)$-walk,
where $s\coloneqq\max(a_{i})$. It is easy to see that $1\leq a_{i}\le s$
for all $i$, and that $v_{i}\ne v_{i+1}$ since we already blocked
consecutive identical vertices together. It remains to check the key
condition, that if $i<j$ and $v_{i}\leftarrow v_{j}$ in $R$, we
must have
\[
a_{(i,j)}>f(\min(a_{i},a_{j})).
\]
Suppose this is not true, so there exists some $i<j$ with $v_{i}\leftarrow v_{j}$
and $a_{(i,j)}\le f(\min(a_{i},a_{j}))$. By the definition of $v_i,v_j$, we have that $\pi(\phi(I_{i}))=\{v_{i}\}$,
$\pi(\phi(I_{j}))=\{v_{j}\}$. By the definition of the lexicographic
power, if $v_{i}\leftarrow v_{j}$ then $w_{i}\leftarrow w_{j}$ for
all $w_{i}\in\pi^{-1}(v_{i}),w_{j}\in\pi^{-1}(v_{j})$. Thus, all
edges between $\phi(I_{i})$ and $\phi(I_{j})$ are oriented from $\phi(I_{j})$
to $\phi(I_{i})$. For $\phi$ to be a homomorphism, this means that
$H$ has no edge oriented from $I_{i}$ to $I_{j}$. On the other
hand, $|I_{i}|=a_{i}$, $|I_{j}|=a_{j}$, and the distance between
the two intervals is $a_{(i,j)}\le f(\min(a_{i},a_{j}))$, so since
$H$ is an $f$-interval mesh there is such an edge. This is a contradiction,
and we see that $\{(v_{i},a_{i})\}_{i=1}^{\ell}$ is an $(R,f,s)$-walk
of length $n$.

Fix any $\varepsilon>0$. We are given that $L_{R,f}(s)\le(\alpha+\varepsilon)s$
for sufficiently large $s$, so we get $s\ge(\alpha+\varepsilon)^{-1}n$ using the fact that $n \leq L_{R,f}(s)$ since we have found an $(R,f,s)$-walk of length $n$.
Since $s=\max(a_{i})$, this means that there is some subinterval
$I_{i}\subseteq[n]$ of length at least $(\alpha+\varepsilon)^{-1}n$
for which $\pi\circ\phi$ is constant on $I_{i}$, i.e.\ the image
$\phi(I_{i})\subseteq \{v_{i}\}\times R^{m-1}$ lies in a copy of $R^{m-1}$. Putting
this together, we have shown that for large enough $n$, the existence
of an embedding $\phi:H\hookrightarrow R^{m}$ implies the existence
of an embedding $\phi':H'\hookrightarrow R^{m-1}$ for some $f$-interval
mesh $H'$ on $(\alpha+\varepsilon)^{-1}n$ vertices. In other words,

\[
m(n)\ge m((\alpha+\varepsilon)^{-1}n)+1
\]
for all $n$ sufficiently large. Applying this recursively, we obtain
that $m(n)\ge\frac{\log n}{\log(\alpha+\varepsilon)}-O_\varepsilon(1)$. By the definition of $m(n)$, any $f$-interval
mesh $H$ on $n$ vertices has no embedding into $R^{m(n)-1}$, and
so
\[
\ovar{r_{1}}(H)>|V(R^{m(n)-1})|\ge r^{\frac{\log n}{\log(\alpha+\varepsilon)}-O_\varepsilon(1)}=\Omega_{\varepsilon}\left(n^{\frac{\log r}{\log(\alpha+\varepsilon)}}\right),
\]
which proves the lemma.
\end{proof}
To finish the proof of Theorem \ref{thm:general-lower-bound}, it
remains to bound the asymptotic growth rate of $L_{R,f}(s)$, which
we do in the next section.

\subsection{Completing the proof \label{subsec:Completing-the-proof}}

The next lemma is a simple observation that is helpful for analyzing
the structure of $(R,f,s)$-walks.
\begin{lem}
\label{lem:t-sequence}If $R$ is a tournament without a transitive
$t$-subtournament, then any sequence $v_{1},\ldots,v_{t}$ of $t$
vertices either contains a back-edge $v_{i}\leftarrow v_{j}$ with
$i<j$ or two consecutive elements $v_{i}=v_{i+1}$.
\end{lem}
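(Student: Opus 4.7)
The plan is to argue by contradiction: suppose the sequence $v_1,\ldots,v_t$ has no back-edge (i.e.\ for all $i<j$ we do not have $v_i\leftarrow v_j$) and no two consecutive elements are equal. I aim to show that this forces all $v_i$ to be pairwise distinct, which then immediately yields a transitive $t$-subtournament, contradicting the hypothesis on $R$.

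First I would handle possible non-consecutive repeats. Suppose, toward a contradiction, that $v_i=v_j$ for some $i<j$; by the no-consecutive-repeats assumption, $j\ge i+2$. Apply the no-back-edge condition to the pair $(i,i+1)$: since $v_i\ne v_{i+1}$, the tournament must orient the edge $v_i\to v_{i+1}$. Now apply the no-back-edge condition to the pair $(i+1,j)$: this forces either $v_{i+1}=v_j$ or $v_{i+1}\to v_j$. But $v_j=v_i$, so the first possibility contradicts $v_i\ne v_{i+1}$ and the second gives $v_{i+1}\to v_i$, contradicting the edge just established. Hence all $v_i$ are distinct.

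Once all $t$ vertices are distinct, for every pair $i<j$ the absence of a back-edge forces $v_i\to v_j$ in $R$. Thus the induced subtournament on $\{v_1,\ldots,v_t\}$ is transitive, with the order of the sequence itself providing a topological ordering. This contradicts the assumption that $R$ has no transitive $t$-subtournament, completing the proof.

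There is no substantive obstacle here; the only mildly delicate point is ruling out non-consecutive repeats, which is handled by the two-step chaining argument in the second paragraph. The lemma essentially just repackages the contrapositive of the definition of a transitive subtournament in a form convenient for the subsequent analysis of $(R,f,s)$-walks.
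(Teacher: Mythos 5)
Your proof is correct and is essentially the same argument as the paper's, merely recast as a proof by contradiction. The paper argues directly that if $v_i = v_j$ with $j > i+1$, then one of $v_i \leftarrow v_{i+1}$ or $v_{i+1}\leftarrow v_j$ must be a back-edge; your two-step chaining via the pairs $(i,i+1)$ and $(i+1,j)$ is the contrapositive of exactly this observation.
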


\begin{proof}
If the vertices are all distinct, then since $R$ has no transitive
$t$-subtournament there must exist a back-edge. Suppose $v_{i}=v_{j}$
and $i<j$ but $j\ne i+1$. Then either $v_{i}\leftarrow v_{i+1}$
or $v_{i+1}\leftarrow v_{j}$ is a back-edge.
\end{proof}
We now prove a recursive upper bound on $L_{R,f}(s)$. Given
an implicit parameter $t\ge3$, tournament $R$, and nondecreasing function
$f:\mathbb{N}\rightarrow\mathbb{R}_{>0}$, we say that a positive
integer $s$ is \emph{short} if $L_{R,f}(s)\le2st$ and $L_{R,f}(s'')\le f(s'')$
for all $s''\le s$.
\begin{lem}
\label{lem:Rfs}Suppose $s\ge1$, $t\ge3$, $R$ is a tournament without
a transitive $t$-subtournament, and $f:\mathbb{N}\rightarrow\mathbb{R}_{>0}$
is a nondecreasing function satisfying $f(s)>6st^{2}$. If $s$ is
short, then every $s'\in[2st,4st]$ is short.
\end{lem}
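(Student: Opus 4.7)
The plan is to bound the length of every $(R,f,s'')$-walk with $s''\le s'$ uniformly by $(t-1)s''+2st^2$; both defining properties of shortness for $s'$ then follow cleanly. Specializing to $s''=s'\in[2st,4st]$ gives $(t-1)s'+2st^2\le 2s't$ (equivalent to $s'\ge 2st^2/(t+1)$, which holds since $s'\ge 2st$), and for any $s''\le s'\le 4st$ we obtain $(t-1)s''+2st^2\le 6st^2-4st<6st^2<f(s)\le f(s'')$, yielding $L_{R,f}(s'')\le f(s'')$ (the case $s''\le s$ is already covered by the shortness of $s$).

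To prove the uniform bound, given an $(R,f,s'')$-walk, I would call a term $(v_i,a_i)$ \emph{heavy} if $a_i>s$ and \emph{light} otherwise. Maximal contiguous runs of light terms form $(R,f,s)$-subwalks, each of total weight at most $L_{R,f}(s)\le 2st$ by the shortness of $s$. The key claim is that any such walk contains at most $t-1$ heavy terms; if so, the total weight is at most $(t-1)s''+t\cdot 2st=(t-1)s''+2st^2$.

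I would prove the key claim by contradiction via Lemma~\ref{lem:t-sequence}. If there were at least $t$ heavy terms, take $t$ consecutive ones $v_{i_1},\dotsc,v_{i_t}$. Lemma~\ref{lem:t-sequence} yields either (a) a back-edge $v_{i_a}\leftarrow v_{i_b}$ with $1\le a<b\le t$, or (b) two consecutive heavy terms that coincide, $v_{i_a}=v_{i_{a+1}}$. In case (a), the walk condition forces $a_{(i_a,i_b)}>f(\min(a_{i_a},a_{i_b}))\ge f(s)>6st^2$, using that both weights exceed $s$ and $f$ is nondecreasing. On the other hand, the open interval $(i_a,i_b)$ contains at most $t-2$ heavy terms (each of weight $\le s''\le 4st$) and at most $t-1$ maximal light blocks (each of weight $\le 2st$), giving $a_{(i_a,i_b)}\le(t-2)(4st)+(t-1)(2st)=6st^2-10st<6st^2$, a contradiction.

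Case (b) is the main obstacle. Here $v_{i_a}=v_{i_{a+1}}$ are equal consecutive heavy terms, necessarily separated in the walk by at least one light term (consecutive walk terms are distinct and $R$ has no $2$-cycles). Since consecutive walk terms cannot form a back-edge (else the walk condition would demand $0>f(\cdot)$), we have $v_{i_a}\to v_{i_a+1}$; combined with $v_{i_{a+1}}=v_{i_a}$ this yields a back-edge $v_{i_a+1}\leftarrow v_{i_{a+1}}$ and hence $a_{(i_a+1,i_{a+1})}>f(a_{i_a+1})$, and symmetrically $a_{(i_a,i_{a+1}-1)}>f(a_{i_{a+1}-1})$. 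To derive a contradiction, I would combine these back-edge constraints with the bound $L_{R,f}(\sigma)\le f(\sigma)$ for $\sigma\le s$ applied to the maximum-weight term of the intermediate light subwalk, iterating the argument on progressively smaller sub-subwalks if necessary. Making this iterative argument go through cleanly, or finding a structural observation that rules out case (b) outright, is the crux of the proof.
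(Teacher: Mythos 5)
Your overall strategy is sound and matches the paper's in broad outline: bound the number of ``heavy'' terms by $t-1$ via Lemma~\ref{lem:t-sequence}, account for the light blocks using $L_{R,f}(s)\le 2st$, and verify both shortness conditions for $s'$. Case (a) and the subsequent bookkeeping are handled correctly. But there is a genuine gap in case (b), which you yourself flag as ``the crux'' and leave unresolved, so the proposal does not constitute a proof.

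The missing idea is simpler than the iterative scheme you gesture at. Instead of extracting back-edge constraints from the terms \emph{adjacent} to $i_a$ and $i_{a+1}$ (whose weights may be far smaller than the light block's maximum, so that $f(a_{i_a+1})$ or $f(a_{i_{a+1}-1})$ is too small to contradict $L_{R,f}(\sigma)\le f(\sigma)$), you should apply the back-edge condition directly between a heavy endpoint and the \emph{maximum-weight} term $z$ of the light block, with $a_z=\sigma$. Because $v_{i_a}=v_{i_{a+1}}$ and $R$ is a tournament, exactly one of $v_{i_a}\leftarrow v_z$ (at positions $i_a<z$) and $v_z\leftarrow v_{i_{a+1}}$ (at positions $z<i_{a+1}$) is a back-edge. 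In either case the walk condition yields $a_{(i_a,z)}>f(\min(a_{i_a},a_z))=f(\sigma)$ or $a_{(z,i_{a+1})}>f(\sigma)$, since the heavy endpoint has weight exceeding $\sigma$. But both of those partial sums are at most $a_{(i_a,i_{a+1})}$, which is the total weight of the light subwalk and hence at most $L_{R,f}(\sigma)\le f(\sigma)$ by the second shortness condition for $s$ (as $\sigma\le s$). This is an immediate contradiction with no iteration required. Replacing your treatment of case (b) with this argument closes the gap and recovers the paper's proof.
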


\begin{proof}
Suppose $s<s'\le4st$, $\{(v_{i},a_{i})\}_{i=1}^{\ell}$ is an $(R,f,s')$-walk,
and let $i_{1}<\dotsb<i_{u}$ be the sequence of all indices where $a_{i_{j}}\ge s$. 

Our first goal is to show that $u<t$. If not, by Lemma \ref{lem:t-sequence}
there is either a back-edge $v_{i_{x}}\leftarrow v_{i_{y}}$ with
$x<y\le t$ or some $x\le t-1$ for which $v_{i_{x}}=v_{i_{x+1}}$.
We show that neither of these situations is possible.

In the first case, there is an edge $v_{i_{x}}\leftarrow v_{i_{y}}$
with $x<y\le t$. By the definition of an $(R,f,s')$-walk,
\begin{equation}
a_{(i_{x},i_{y})}>f(\min(a_{i_{x}},a_{i_{y}}))\ge f(s).\label{eq:backedge-length}
\end{equation}
On the other hand, 
\[
a_{(i_{x},i_{y})}=\sum_{j\in(x,y)}a_{i_{j}}+\sum_{j\in[x,y)}a_{(i_{j},i_{j+1})}\le s't+t\cdot L_{R,f}(s)\le6st^{2},
\]
since for each $j=x,\ldots,y-1$, the subsequence $\{(v_{i},a_{i})\}_{i=i_{j}+1}^{i_{j+1}-1}$
is an $(R,f,s)$-walk with length at most $L_{R,f}(s)\le2st$. But
$f(s)>6st^{2}$, so this contradicts (\ref{eq:backedge-length}) and
the back-edge $v_{i_{x}}\leftarrow v_{i_{y}}$ cannot exist. 

Next, suppose for some $x\le t-1$ that $v_{i_{x}}=v_{i_{x+1}}$.
The subsequence $\{(v_{i},a_{i})\}_{i=i_{x}+1}^{i_{x+1}-1}$ is an
$(R,f,s'')$-walk where $s''$ is the maximum value of $a_{i}$ in
this subsequence. Pick some $z$ for which $a_{z}=s''$. Either $v_{i_{x}}\leftarrow v_{z}$
or $v_{z}\leftarrow v_{i_{x+1}}$ is a back-edge, and without loss
of generality assume it is the former. Applying the definition of
the $(R,f,s'')$-walk on the two indices $i_{x}$ and $z$,
\[
a_{(i_{x},z)}>f(\min(a_{i_{x}},a_{z}))=f(s'').
\]
On the other hand, this sum is bounded above by $a_{(i_{x},i_{x+1})}$.
We know $a_{(i_{x},i_{x+1})}\le L_{R,f}(s'')\le f(s'')$ because $\{(v_{i},a_{i})\}_{i=i_{x}+1}^{i_{x+1}-1}$
is an $(R,f,s'')$-walk and $s\ge s''$ is short, so we have another
contradiction. Thus $u<t$. 

We obtain
\begin{equation}
a_{[1,\ell]}\le\sum_{j=1}^{u}a_{i_{j}}+\sum_{j=0}^{u}a_{(i_{j},i_{j+1})}\le s't+t\cdot L_{R,f}(s)\le s't+2st^{2},\label{eq:Rfs-recursion}
\end{equation}
for all $s<s'\le4st$ and any $(R,f,s')$-walk $\{(v_{i},a_{i})\}_{i=1}^{\ell}$.
Here we let $i_{0}=0$ and $i_{u+1}=\ell+1$ for convenience.

Inequality (\ref{eq:Rfs-recursion}) implies that $L_{R,f}(s')\le2s't$
for all $s'\in[2st,4st]$. It also implies that $L_{R,f}(s'')\le6st^{2}<f(s)\le f(s'')$
for all $s''\in(s,4st]$. We have verified both conditions for $s'$
to be short for every $s'\in[2st,4st]$, as desired.
\end{proof}
It remains to pick a function $f$ for which Lemma \ref{lem:Rfs}
bootstraps successfully. Define
\begin{equation}
f(s)\coloneqq\begin{cases}
\frac{10s^{2}t^{3/2}\log t}{\log^{2}s} & s\ge4\\
40t^{3/2}\log t & s<4,
\end{cases}\label{eq:f-definition}
\end{equation}
where the values of $f(1),f(2),f(3)$ are chosen just to make $f$
nondecreasing. Recall that all logarithms are to base $2$.
\begin{lem}
\label{lem:Rfs-specific}If $t \ge 10^6$, $f:\mathbb{N}\rightarrow\mathbb{R}_{>0}$
is defined by (\ref{eq:f-definition}), and $R$ is a tournament without
a transitive $t$-subtournament, then $L_{R,f}(s)\le2st$ for all $s$ sufficiently large.
\end{lem}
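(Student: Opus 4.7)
The plan is to iterate Lemma~\ref{lem:Rfs} starting from a carefully chosen short base case $s_\ast \coloneqq \lfloor 20\, t^{1/2} \log t \rfloor$ and continue until the resulting short intervals merge into a full half-line.

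First I would show that $s_\ast$ is short via a direct window argument. Suppose $\{(v_i,a_i)\}_{i=1}^\ell$ is any $(R,f,s)$-walk with $s\le s_\ast$ and $\ell\ge t$. Applying Lemma~\ref{lem:t-sequence} to the first $t$ vertices gives a back-edge $v_x\leftarrow v_y$ with $1\le x<y\le t$; the walk condition forces $a_{(x,y)}>f(\min(a_x,a_y))\ge f(1)=40\,t^{3/2}\log t$, whereas trivially $a_{(x,y)}\le (t-2)s \le (t-2)s_\ast$. This contradicts our choice of $s_\ast$, so any such walk has at most $t-1$ terms, giving $L_{R,f}(s)\le(t-1)s$ for every $s\le s_\ast$. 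In particular $L_{R,f}(s_\ast)\le 2 s_\ast t$, and $L_{R,f}(s'')\le(t-1)s''\le f(s'')$ for all $s''\le s_\ast$ since $f(s'')$ comfortably dominates $(t-1)s''$. Hence $s_\ast$ is short.

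Next I would verify the bootstrap hypothesis $f(s_\ast)>6s_\ast t^2$. For $t\ge 10^6$ one has $s_\ast\le t$ and hence $\log s_\ast\le\log t$, which yields $f(s_\ast)/(s_\ast t^2)\ge 10 s_\ast\log t/(t^{1/2}\log^2 t)=200>6$. Since $s\mapsto s/\log^2 s$ is increasing for $s\ge e^2$, the same inequality $f(s)>6st^2$ persists for every $s\ge s_\ast$, so Lemma~\ref{lem:Rfs} is applicable throughout the iteration. Applying it to $s_\ast$ shows $[L_1,R_1]\coloneqq[2s_\ast t,4s_\ast t]$ is short; iterating, and noting that as $s$ varies continuously over $[L_k,R_k]$ the intervals $[2st,4st]$ slide to cover $[2L_kt,4R_kt]$, we obtain that $[L_k,R_k]\coloneqq[2^k s_\ast t^k,\,4^k s_\ast t^k]$ is short for all $k\ge 1$. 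The gap ratio between consecutive intervals is $L_{k+1}/R_k=2t/2^k$, which is at most $1$ as soon as $k\ge k_0\coloneqq\lceil 1+\log t\rceil$. Consequently $\bigcup_{k\ge k_0}[L_k,R_k]=[L_{k_0},\infty)$, so every integer $s\ge L_{k_0}$ is short and in particular satisfies $L_{R,f}(s)\le 2st$, as required.

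The delicate step is the choice of $s_\ast$: it must be small enough that the direct back-edge argument applies (requiring $s_\ast\lesssim t^{1/2}\log t$), yet large enough that $f(s_\ast)>6s_\ast t^2$ (requiring $s_\ast\gtrsim t^{1/2}\log t$). The hypothesis $t\ge 10^6$ is precisely what guarantees that these two constraints are compatible with enough slack to accommodate the specific constant $20$ chosen in the definition of $s_\ast$.
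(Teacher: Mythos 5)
Your proof is correct and follows essentially the same strategy as the paper: establish shortness for a base value $s_\ast = \Theta(t^{1/2}\log t)$ via the $\ell < t$ argument, verify the threshold inequality $f(s) > 6st^2$ using $\log s_\ast \le \log t$, and then iterate Lemma~\ref{lem:Rfs} until the short intervals $[(2t)^k s_\ast, (4t)^k s_\ast]$ begin to overlap. The only cosmetic differences are the choice of constant ($20$ vs.\ the paper's $40$) and that you made explicit the overlap threshold $k_0 = \lceil 1 + \log t\rceil$ where the paper simply asserts that all sufficiently large integers are covered; you also implicitly use that the consecutive-equal-vertices case of Lemma~\ref{lem:t-sequence} is excluded by the walk definition, which the paper notes explicitly.
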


\begin{proof}
We apply Lemma \ref{lem:Rfs} inductively to show that $s$ is
short for all $s$ sufficiently large. This implies the desired result.

For the base case, we claim that $s$ is short for all $s\le s_{0}\coloneqq40t^{1/2}\log t$.
Indeed, suppose $s\le s_{0}$ and $\{(v_{i},a_{i})\}_{i=1}^{\ell}$
is an $(R,f,s)$-walk. By Lemma \ref{lem:t-sequence}, if $\ell\ge t$
then there is either a back-edge $v_{i}\leftarrow v_{j}$ with $i<j\le t$
or two consecutive elements $v_{i}=v_{i+1}$. The latter contradicts
the definition of an $(R,f,s)$-walk, so assume the former holds.
But $a_{(i,j)}<st\le f(1)\le f(\min(a_{i},a_{j}))$, so this contradicts
the definition of an $(R,f,s)$-walk again. We have shown that $\ell<t$,
so $L_{R,f}(s)<st\le f(s)$ for all $s\le s_{0}$. This proves $s$
is short for every $s\le s_{0}$.

Next, we check that $f(s)>6st^{2}$ for all $s\ge s_{0}$. Indeed, $f(s)/s$ is increasing for $s\ge s_{0}$,
and $\log s_{0}\le\log t$ since $t \ge 10^6$. We get
\[
f(s_{0})=\frac{10s_{0}^{2}t^{3/2}\log t}{\log^{2}s_{0}}>1000t^{5/2}\log t>6s_{0}t^{2},
\]
proving that the conditions of Lemma \ref{lem:Rfs} are satisfied.
By induction, Lemma \ref{lem:Rfs} then implies that $s$ is short
for $s\in[(2t)^{k}s_{0},(4t)^{k}s_{0}]$ for all $k\ge0$. All sufficiently
large integers lie in some such interval, so $L_{R,f}(s)\le2st$ for
all sufficiently large $s$, as desired.
\end{proof}
Putting everything together, we have a proof of the general lower
bound.
\begin{proof}[Proof of Theorem \ref{thm:general-lower-bound}.]
We may assume that $\Delta$ is sufficiently large as we always have $\ovar{r_{1}}(H_{n})\ge n$, which proves the theorem for small $\Delta$ by picking the implicit constant factor in the exponent appropriately. Let $t = \Delta^{2/3}/(200 \log^{2/3} \Delta)$; we may assume $t\ge 10^6$. Define $f:\mathbb{N}\rightarrow\mathbb{R}_{>0}$
by (\ref{eq:f-definition}). We have
\[
S=\sum_{m\ge0}f(2^{m+2})\cdot2^{-2m} \le f(1)\cdot\sum_{m\ge1}m^{-2} \le 80t^{3/2}\log t.
\]
Lemma \ref{lem:interval-mesh} implies that there exists an $f$-interval
mesh $H$ on $\NN$ with maximum degree at most $2S+17 \le 200 t^{3/2}\log t \le \Delta$.
For any $n\ge 1$, let $H_n$ be the induced subgraph of $H$ on the interval $[n]$, so that $H_n$ has $n$ vertices and is also an $f$-interval mesh of maximum degree at most $\Delta$.

There exists a tournament $R$ on $r=2^{\Omega(t)}$ vertices with
no transitive $t$-subtournament. By Lemmas \ref{lem:walk-reduction}
and \ref{lem:Rfs-specific} applied with these choices of $R,f$ and
$H_{n}$, we find that 
\[
\ovar{r_{1}}(H_{n})>n^{\frac{\log r}{\log(2t)}-o(1)}\ge n^{\Omega(t/\log t)},
\]
which proves the theorem, by our choice of $t$.
\end{proof}

We remark that the polylogarithmic factor in $\Delta=\Omega(t^{2/3}/\log^{2/3} \Delta)$ can be easily improved. Indeed, the growth rate of $f(s)=\Theta(s^2/\log^2 s)$ is chosen so that
\[
\sum_{m\ge 0}f(2^{m+2})\cdot 2^{-2m} < \infty,
\]
and we may take $f(s)=\Theta(s^2/\log^{1+\varepsilon} s)$ for any fixed $\varepsilon>0$ instead, leading to a slightly smaller $\Delta$.

\section{Greedy embedding \label{sec:Greedy-embedding}}

In this section we prove the main lemmas needed for all of the upper
bounds in this paper. We use the greedy embedding technique, motivated
by similar arguments for ordered graphs from \cite{CoFoLeSu}.

\textbf{Framework.} Let $H$ be an acyclic digraph on $n$ vertices
$v_{1},\ldots,v_{n}$. We would like to find an embedding $\phi:H\hookrightarrow T$
into an ambient tournament $T$. In addition we specify $n$
sets $U_{1},\ldots,U_{n}\subseteq V(T)$ and aim to satisfy $\phi(v_{i})\in U_{i}$
for all $i$. Embedding then proceeds in $n$ rounds, where round
$t$ determines the image $\phi(v_{t})$. After round $t$, we keep
track of the shrinking sets $U_{1}^{(t)},\ldots,U_{n}^{(t)}$ of ``valid
candidates'' for each vertex, where initially $U_{i}^{(0)}=U_{i}$
for all $i$. On round $t$, $\phi(v_{t})$ is a carefully chosen vertex
in $U_{t}^{(t-1)}$, and $U_{t}^{(t)}\coloneqq\{\phi(v_{t})\}$. The
other candidate sets are updated as follows:
\[
U_{j}^{(t)}\coloneqq\begin{cases}
[U_{j}^{(t-1)}\cap N^{+}(v_{t})]\setminus\{\phi(v_{t})\} & \text{if }v_{t}\rightarrow v_{j},\\{}
[U_{j}^{(t-1)}\cap N^{-}(v_{t})]\setminus\{\phi(v_{t})\} & \text{if }v_{j}\rightarrow v_{t},\\
U_{j}^{(t-1)}\setminus\{\phi(v_{t})\} & \text{else}.
\end{cases}
\]

The process fails if there is an empty $U_{t}^{(t-1)}$, as there would be no valid choice for $\phi(v_t)$. Otherwise,
it succeeds if $\phi(v_{t})$ is chosen successfully for each $t\le n$.
Note that after round $t$, $\{\phi(v_{1}),\ldots,\phi(v_{t})\}$
is an embedded copy of $H[\{v_{1},\ldots,v_{t}\}]$ in $T$, and
these vertices have been removed from the other candidate sets, so
the update rule guarantees that $U_{j}^{(t)}=\{\phi(v_{j})\}$ remains
a singleton when $t\ge j$. If the greedy embedding process succeeds,
it exhibits the existence of a copy of $H$ in $T$. See Figure \ref{fig:greedy-embedding} for a schematic illustration of the greedy embedding process.

\begin{figure}[h]
    \centering
    \begin{tikzpicture}[
containernode/.style={rectangle, draw=black, ultra thin, inner sep=0pt, minimum height=25mm, minimum width=40mm},
roundnode/.style={circle, draw=black, fill=black, semithick, inner sep=0pt, minimum size=1mm},
roundnodeempty/.style={circle, draw=black, semithick, inner sep=0pt, minimum size=1mm},
vertexset/.style={rectangle, draw=black, semithick, minimum height=8mm, minimum width=11mm},
vertexsetsmall/.style={rectangle, draw=black, semithick, minimum height=5mm, minimum width=7mm},
vertexsettiny/.style={rectangle, draw=black, semithick, minimum height=2mm, minimum width=4mm},
]

\def\cx{0}
\def\ccx{5.5}
\def\cccx{11}
\def\xgap{1.25}
\def\ry{0}
\def\rry{-3}

\node[draw=black!0] at (1.3, 1.25) {Target graph $H$};
\node[containernode] (c) at (2, \ry + 0.25) {};
\node[roundnode][label={[label distance=-0.85cm]:$v_1$}] (n11) at (0.5, \ry) {};
\node[roundnode][label={[label distance=-0.85cm]:$v_2$}] (n12) at (1.5, \ry) {};
\node[roundnode][label={[label distance=-0.85cm]:$v_3$}] (n13) at (2.5, \ry) {};
\node[roundnode][label={[label distance=-0.85cm]:$v_4$}] (n14) at (3.5, \ry) {};
\draw[semithick, ->] (n11.north) .. controls +(up:4mm) and +(up:4mm) .. ([xshift=-.05cm]n12.north);
\draw[semithick, ->] (n11.north) .. controls +(up:6mm) and +(up:6mm) .. ([xshift=-.05cm]n13.north);
\draw[semithick, ->] (n12.north) .. controls +(up:6mm) and +(up:6mm) .. ([xshift=.05cm]n14.north);
\draw[semithick, ->] (n13.north) .. controls +(up:4mm) and +(up:4mm) .. ([xshift=-.05cm]n14.north);

\node[draw, ultra thin] at (5.5, 1.25) {Step 0};
\node[vertexset][rounded corners][label={[label distance=-1.5cm]:$U_1^{(0)}$}] (n21)   at (\ccx, \ry)         {};
\node[roundnodeempty][rounded corners]                                         (n21_c) at (\ccx, \ry)         {};
\node[vertexset][rounded corners][label={[label distance=-1.5cm]:$U_2^{(0)}$}] (n22)   at (\ccx+\xgap, \ry)   {};
\node[vertexsetsmall][dashed, rounded corners]                                 (n22_c) at (\ccx+\xgap, \ry)   {};
\node[vertexset][rounded corners][label={[label distance=-1.5cm]:$U_3^{(0)}$}] (n23)   at (\ccx+2*\xgap, \ry) {};
\node[vertexsetsmall][dashed, rounded corners]                                 (n23_c) at (\ccx+2*\xgap, \ry) {};
\node[vertexset][rounded corners][label={[label distance=-1.5cm]:$U_4^{(0)}$}] (n24)   at (\ccx+3*\xgap, \ry) {};
\draw[semithick, dashed, ->] (n21_c.north) .. controls +(up:4mm) and +(up:4mm) .. (n22_c.north);
\draw[semithick, dashed, ->] (n21_c.north) .. controls +(up:8mm) and +(up:8mm) .. (n23_c.north);

\node[draw, ultra thin] at (11, 1.25) {Step 1};
\node[roundnode][label={[label distance=-1.1cm]:$\phi(v_1)$}]                        (n31)   at (\cccx, \ry)         {};
\node[vertexsetsmall][rounded corners][label={[label distance=-1.35cm]:$U_2^{(1)}$}] (n32)   at (\cccx+\xgap, \ry)   {};
\node[roundnodeempty]                                                                (n32_c) at (\cccx+\xgap, 0)     {};
\node[vertexsetsmall][rounded corners][label={[label distance=-1.35cm]:$U_3^{(1)}$}] (n33)   at (\cccx+2*\xgap, \ry) {};
\node[vertexset][rounded corners][label={[label distance=-1.5cm]:$U_4^{(1)}$}]       (n34)   at (\cccx+3*\xgap, \ry) {};
\node[vertexsetsmall][rounded corners, dashed]                                       (n34_c) at (\cccx+3*\xgap, \ry) {};
\draw[semithick, ->] (n31.north) .. controls +(up:4mm) and +(up:4mm) .. (n32.north);
\draw[semithick, ->] (n31.north) .. controls +(up:6mm) and +(up:6mm) .. (n33.north);
\draw[semithick, dashed, ->] (n32_c.north) .. controls +(up:4mm) and +(up:4mm) .. (n34_c.north);

\node[draw, ultra thin] at (0.5, -1.75) {Step 2};
\node[roundnode][label={[label distance=-1cm]:$\phi(v_1)$}]                          (n41)   at (\cx, \rry)    {};
\node[roundnode][label={[label distance=-1cm]:$\phi(v_2)$}]                          (n42)   at (\cx + \xgap, \rry) {};
\node[vertexsetsmall][rounded corners][label={[label distance=-1.25cm]:$U_3^{(2)}$}] (n43)   at (\cx + 2*\xgap, \rry)  {};
\node[roundnodeempty]                                                                (n43_c) at (\cx + 2*\xgap, \rry)  {};
\node[vertexsetsmall][rounded corners][label={[label distance=-1.25cm]:$U_4^{(2)}$}] (n44)   at (\cx + 3*\xgap, \rry) {};
\node[vertexsettiny][rounded corners, dashed]                                        (n44_c) at (\cx + 3*\xgap, \rry) {};
\draw[semithick, ->] (n41.north) .. controls +(up:4mm) and +(up:4mm) .. ([xshift=-.05cm]n42.north);
\draw[semithick, ->] (n41.north) .. controls +(up:6mm) and +(up:6mm) .. ([xshift=-.05cm]n43.north);
\draw[semithick, ->] (n42.north) .. controls +(up:6mm) and +(up:6mm) .. ([xshift=1mm]n44.north);
\draw[semithick, dashed, ->] (n43_c.north) .. controls +(up:4mm) and +(up:4mm) .. (n44_c.north);

\node[draw, ultra thin] at (5.5, -1.75) {Step 3};
\node[roundnode][label={[label distance=-1cm]:$\phi(v_1)$}]                        (n51)   at (\ccx, \rry)         {};
\node[roundnode][label={[label distance=-1cm]:$\phi(v_2)$}]                        (n52)   at (\ccx+\xgap, \rry)   {};
\node[roundnode][label={[label distance=-1cm]:$\phi(v_3)$}]                        (n53)   at (\ccx+2*\xgap, \rry) {};
\node[vertexsettiny][rounded corners][label={[label distance=-1.1cm]:$U_4^{(3)}$}] (n54)   at (\ccx+3*\xgap, \rry) {};
\node[roundnodeempty]                                                              (n54_c) at (\ccx+3*\xgap, \rry) {};
\draw[semithick, ->] (n51.north) .. controls +(up:4mm) and +(up:4mm) .. ([xshift=-.05cm]n52.north);
\draw[semithick, ->] (n51.north) .. controls +(up:6mm) and +(up:6mm) .. ([xshift=-.05cm]n53.north);
\draw[semithick, ->] (n52.north) .. controls +(up:6mm) and +(up:6mm) .. ([xshift=1mm]n54.north);
\draw[semithick, ->] (n53.north) .. controls +(up:4mm) and +(up:4mm) .. ([xshift=-1mm]n54.north);

\node[draw, ultra thin] at (11, -1.75) {Step 4};
\node[roundnode][label={[label distance=-1cm]:$\phi(v_1)$}] (n61) at (\cccx, \rry)         {};
\node[roundnode][label={[label distance=-1cm]:$\phi(v_2)$}] (n62) at (\cccx+\xgap, \rry)   {};
\node[roundnode][label={[label distance=-1cm]:$\phi(v_3)$}] (n63) at (\cccx+2*\xgap, \rry) {};
\node[roundnode][label={[label distance=-1cm]:$\phi(v_4)$}] (n64) at (\cccx+3*\xgap, \rry) {};
\draw[semithick, ->] (n61.north) .. controls +(up:4mm) and +(up:4mm) .. ([xshift=-.05cm]n62.north);
\draw[semithick, ->] (n61.north) .. controls +(up:6mm) and +(up:6mm) .. ([xshift=-.05cm]n63.north);
\draw[semithick, ->] (n62.north) .. controls +(up:6mm) and +(up:6mm) .. ([xshift=.05cm]n64.north);
\draw[semithick, ->] (n63.north) .. controls +(up:4mm) and +(up:4mm) .. ([xshift=-.05cm]n64.north);

\end{tikzpicture}
    \caption{Illustration of the greedy embedding process for an acyclic orientation of the four-cycle. A directed arrow from a vertex to a set indicates that the vertex is complete to the set. All edges point forward in this example, but we do not always make this assumption.}
    \label{fig:greedy-embedding}
\end{figure}
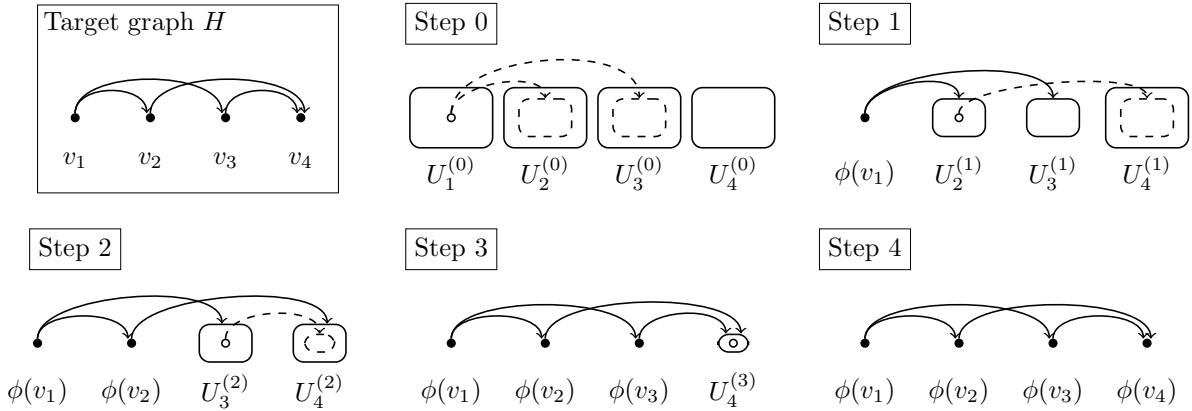

For the upper bounds described in the introduction (Theorems \ref{thm:G(n,d)-almost-linear}, \ref{thm:height-ub}, and \ref{thm:graded}), we apply greedy embedding in two separate stages, which
we call the outer stage and the inner stage. Roughly speaking, in
the outer stage, we run the greedy embedding procedure many times
to show that if $T$ is $H$-free, then $T$ contains a large ``approximate blowup of $H$.'' In the inner stage, we use greedy embedding
one final time within this ``approximate blowup'' to guarantee the
existence of a copy of $H$ in $T$. In either case, we can conclude that $T$ contains a copy of $H$---either it is found directly by the greedy embedding strategy, or else the failure of the greedy embedding yields the ``approximate blowup'' of $H$, in which a copy of $H$ can be found directly.

This section is split into three subsections. The first covers the basic results that follow from the greedy embedding framework described above, namely how a failure to greedily embed $H$ in a tournament $T$ implies that $T$ contains a certain nice structure, namely a pair of large vertex sets such that most edges between them have the same orientation. The tools built in this first subsection are then used as basic building blocks and iterated in the subsequent subsections. In the second subsection, we use them to build the outer stage of the embedding. In the third subsection, we explain how to use this outer stage construction of an ``approximate blowup of $H$'' to finally embed $H$ itself.

In some of these greedy embedding arguments, we are concerned
with partitioning an acyclic digraph $H$ into a number of parts and
embedding the parts one at a time, so the following definition will
be useful.
\begin{defn}
If $H$ is an acyclic digraph, we say that a collection $\{P_{i}\}_{i=1}^{r}$
of vertex subsets of $H$ is a \emph{directed partition} of $H$ if
$P_{1}\sqcup P_{2}\sqcup\cdots\sqcup P_{r}=V(H)$, and any edge of
$H$ between two distinct parts $P_{i},P_{j}$ with $i<j$ is oriented
from $P_{i}$ to $P_{j}$.
\end{defn}

In particular, the height of $H$ is exactly the least $h$ for
which there exists a directed partition of $H$ into $h$ independent sets. 

\subsection{The basic greedy embedding building blocks}
Recall that an undirected graph $G$ is said to be $d$\emph{-degenerate} if there
exists an ordering $v_{1},\ldots,v_{n}$ of the vertices of
$G$ such that each $|N(v_{i})\cap\{v_{1},\ldots,v_{i-1}\}|\le d$,
and such an ordering is called a $d$\emph{-degenerate ordering} of
$G$. A $d$-degenerate ordering is a natural order for greedily embedding
an undirected graph $G$, since each candidate set $U_{j}^{(t)}$ in the greedy embedding strategy
only shrinks in size by more than one at most $d$ times. We say that
a digraph $H$ is $d$-degenerate if its underlying undirected graph
is $d$-degenerate. Note that if $H$ has maximum degree
$\Delta$ then it is $\Delta$-degenerate, but $d$-degenerate digraphs
can have arbitrarily large maximum degree.

Define a $\delta$-\emph{dense pair} $(W_{1},W_{2})$ in a tournament
$T$ to be a pair of vertex subsets such that
at least $\delta|W_{1}||W_2|$ of the edges between $W_{1}$ and $W_{2}$
point from $W_{1}$ to $W_{2}$. The \emph{size} of the pair is defined
to be $\min(|W_{1}|,|W_2|)$. We do not require $W_1$ and $W_2$ to be disjoint, although the assumption of $\delta$-density implies that $W_1 \cap W_2$ cannot be too large if $\delta$ is close to $1$.

The first lemma in this subsection uses greedy embedding to show that
if $T$ doesn't contain a copy of a given $d$-degenerate $H$, then
$T$ contains some large dense pair. The undirected analogue of this lemma is well-known, and goes back at least to work of Erd\H os and Hajnal \cite[Lemma 1.5]{ErHa}.
\begin{lem}
\label{lem:dense-pair}Let $H$ be a $d$-degenerate digraph
with $n$ vertices and maximum degree $\Delta$, and let $0<c\le\frac{1}{2}$.
If $T$ is an $H$-free tournament on $N\geq2\Delta c^{-d}n$ vertices,
then $T$ contains a $(1-c)$-dense pair $(W_{1},W_{2})$ with size
at least $c^{d}N/(2\Delta)$.
\end{lem}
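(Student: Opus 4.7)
The plan is to apply the greedy embedding framework to a $d$-degenerate ordering $v_1, \ldots, v_n$ of $H$, with initial candidate sets $U_i^{(0)} = V(T)$ for all $i$. The key invariant I aim to maintain is that $|U_j^{(t-1)}| \ge c^d N/2$ for every $j$ and every time $t$. To see that this should be achievable, note that $U_j^{(t-1)}$ is obtained from $V(T)$ by at most $d$ intersections with appropriate neighborhoods in $T$---one for each back-neighbor of $v_j$ in the degeneracy ordering, of which there are at most $d$---together with at most $t - 1 < n$ single-vertex removals. Therefore, if each such intersection leaves at least a $c$-fraction of the candidate set intact, we get $|U_j^{(t-1)}| \ge c^d N - n$, and the hypothesis $N \ge 2\Delta c^{-d} n$ guarantees this quantity is at least $c^d N/2$.

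To keep the intersections controlled, at each step $t$ I would attempt to choose $\phi(v_t) \in U_t^{(t-1)}$ to be \emph{good}: that is, to satisfy $|N^\pm(\phi(v_t)) \cap U_j^{(t-1)}| \ge c|U_j^{(t-1)}|$ for every forward neighbor $v_j$ of $v_t$ in the degeneracy ordering, where the sign $\pm$ is dictated by the orientation of the edge between $v_t$ and $v_j$ in $H$. If a good vertex is available at every step, then the invariant propagates and the greedy process successfully produces an embedding of $H$ into $T$, contradicting the assumption that $T$ is $H$-free.

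Otherwise, the process fails at some step $t$: every vertex of $U_t^{(t-1)}$ is ``bad'' for at least one of the at most $\Delta$ forward neighbors $v_j$ of $v_t$ in $H$. Pigeonholing over these forward neighbors yields a specific $v_j$ together with a bad subset $B \subseteq U_t^{(t-1)}$ of size $|B| \ge |U_t^{(t-1)}|/\Delta \ge c^d N/(2\Delta)$. For each $u \in B$, since $T$ is a tournament, the failure $|N^\pm(u) \cap U_j^{(t-1)}| < c|U_j^{(t-1)}|$ forces $|N^\mp(u) \cap U_j^{(t-1)}| \ge (1-c)|U_j^{(t-1)}|$ (up to a harmless correction of $1$ in the case where $u$ happens to lie in $U_j^{(t-1)}$). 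Summing this over $u \in B$, the pair $(B, U_j^{(t-1)})$---with the roles of $W_1$ and $W_2$ chosen according to the sign $\pm$---is $(1-c)$-dense and has size at least $c^d N/(2\Delta)$, as required. The main obstacle in executing this plan cleanly will be bookkeeping the various $O(1)$ error terms in both the invariant maintenance and the final density calculation; these should be absorbable either by slightly tightening the definition of ``bad'' by an additive unit or by observing that $|U_j^{(t-1)}|$ is much larger than any such $O(1)$ correction under the hypothesis on $N$.
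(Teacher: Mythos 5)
Your proposal is correct and is essentially the paper's own proof: same greedy embedding along a $d$-degenerate ordering, same invariant that each still-active candidate set stays above $c^d N - n \ge c^d N/2$, same notion of a ``good'' choice for $\phi(v_t)$, and the same pigeonhole over the at most $\Delta$ forward neighbors of $v_t$ to extract the $(1-c)$-dense pair of size at least $c^d N/(2\Delta)$ upon failure. The $O(1)$ corrections you flag (e.g.\ when $u$ lies in $U_j^{(t-1)}$) are indeed absorbed harmlessly, just as in the paper.
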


\begin{proof}
We use the greedy embedding framework described above. Let us
label the vertices of $H$ according to the $d$-degenerate ordering
as $v_{1},v_{2},\dotsc,v_{n}.$ We initialize $U_{i}^{(0)}=V(T)$
for all $1\leq i\leq n$. We now attempt to embed the vertices of
$H$ one at a time in $T$, in the $d$-degenerate ordering. For $j>t$,
let $N_{t}(v_{j})$ denote the set of vertices $v_{i}$ with $i\leq t$
such that $v_{i}$ and $v_{j}$ are connected by an edge (in some direction).
We inductively pick the values of $\phi(v_{t})\in V(T)$ and maintain
vertex sets $U_{i}^{(t)}$ with the following properties.
\begin{enumerate}
\item For every $i\leq t$, we have $U_{i}^{(t)}=\{\phi(v_{i})\}$. 
\item For every $j>t$, we have $|U_{j}^{(t)}|\geq c^{|N_{t}(v_{j})|}N-t$.
\item For $i\le t$, if $v_{i}\to v_{j}$ then $\phi(v_{i})\to x$ for every
$x\in U_{j}^{(t)}$, and if $v_{i}\leftarrow v_{j}$ then $\phi(v_{i})\leftarrow x$
for every $x\in U_{j}^{(t)}$.
\end{enumerate}
From these properties, we see that if the process continues through
step $t=n$, then we will have embedded a copy of $H$ in $T$, contradicting
our assumption that $T$ is $H$-free. Moreover, all three properties
are vacuously true for $t=0$. Now, suppose we have maintained this
process up through step $t-1$. Suppose there exists $w_{t}\in U_{t}^{(t-1)}$
such that for every $j>t$ with $v_t\to v_j$ (resp.\ $v_{t}\leftarrow v_j$),
we have $|N^{+}(w_{t})\cap U_{j}^{(t-1)}|\geq c|U_{j}^{(t-1)}|$ (resp.\ $|N^{-}(w_{t})\cap U_{j}^{(t-1)}|\geq c|U_{j}^{(t-1)}|$). Then we
may define $\phi(v_{t})=w_{t}$, $U_{t}^{(t)}=\{\phi(v_{t})\}$, and
update the remaining sets as 
\[
U_{j}^{(t)}\coloneqq\begin{cases}
[U_{j}^{(t-1)}\cap N^{+}(v_{t})]\setminus\{\phi(v_{t})\} & \text{if }v_{t}\rightarrow v_{j},\\{}
[U_{j}^{(t-1)}\cap N^{-}(v_{t})]\setminus\{\phi(v_{t})\} & \text{if }v_{j}\rightarrow v_{t},\\
U_{j}^{(t-1)}\setminus\{\phi(v_{t})\} & \text{else},
\end{cases}
\]
for all $j>t$. Properties 1 and 3 above continue to hold automatically
after round $t$, and all that remains to check is Property 2. For
those $j>t$ for which $v_{j}$ is not adjacent to $v_{t}$, $N_{t}(v_{j})=N_{t-1}(v_{j})$
and at most one vertex is removed from $U_{j}^{(t-1)}$ to obtain
$U_{j}^{(t)}$. Therefore, 
\[
|U_{j}^{(t)}|\geq|U_{j}^{(t-1)}|-1\geq c^{|N_{t-1}(v_{j})|}N-(t-1)-1=c^{|N_{t}(v_{j})|}-t.
\]
On the other hand, if $v_{j}$ is adjacent to $v_{t}$, then $|N_{t}(v_{j})|=|N_{t-1}(v_{j})|+1$,
and therefore
\[
|U_{j}^{(t)}|\geq c|U_{j}^{(t-1)}|-1\geq c\cdot(c^{|N_{t-1}(v_{j})|}N-(t-1))-1\geq c^{|N_{t}(v_{j})|}N-t.
\]
Thus, all three properties are maintained if such a $w_{t}$ exists.

Since we assumed that $T$ was $H$-free, this process cannot continue
until step $t=n$, and therefore must fail at some step $1\leq t\leq n-1$.
Let $W_{0}=U_{t}^{(t-1)}$. Since the process fails at this step,
for every $w\in W_{0}$, we can assign some $j>t$ such that either
$v_t \to v_j$ and $|N^{+}(w)\cap U_{j}^{(t-1)}|<c|U_{j}^{(t-1)}|$,
or $v_t \leftarrow v_j$ and $|N^{-}(w)\cap U_{j}^{(t-1)}|<c|U_{j}^{(t-1)}|$.
Since $v_{t}$ has at most $\Delta$ neighbors in total, at least
$|W_{0}|/\Delta$ choices of $w$ are assigned the same $j>t$ by
the pigeonhole principle. Fix such a ``popular'' $j$.

Suppose first that $v_t \to v_j$. Let $W_{2}=U_{j}^{(t-1)}$, and let $W_{1}$ be
the set of all $w\in W_0$ which have fewer than $c|W_{2}|$ out-neighbors
in $W_{2}$. Then $(W_{1},W_{2})$ is a $(1-c)$-dense pair. Similarly, if $v_{j}\in N^{-}(v_{t})$, then we would similarly find that $(W_2, W_1)$ is a $(1-c)$-dense pair.

It remains to verify the lower bound on the sizes of $W_{1}$ and
$W_{2}$. Recall that the greedy embedding process succeeded up through
step $t-1$, meaning that 
\[
|U_{t}^{(t-1)}|\geq c^{|N_{t-1}(v_{t})|}N-(t-1)\geq c^{d}N-n\geq\frac{c^{d}N}{2},
\]
and similarly for $U_{j}^{(t-1)}$, where we use the $d$-degeneracy
assumption to conclude that $|N_{t-1}(v_{t})|\leq d$, and our assumption
that $t<n\leq c^{d}N/2$. Since $|W_{2}|=|U_{j}^{(t-1)}|$ and $|W_{1}|\geq|U_{t}^{(t-1)}|/\Delta$,
this completes the proof.
\end{proof}
The second lemma proves a much stronger bound when $H$ is $1$-degenerate and weakly connected,
i.e.\ some orientation of a tree (recall that a digraph is called \emph{weakly connected} if its underlying undirected graph is connected). Note that a $1$-dense pair in a tournament is
just a pair of sets $W_{1},W_{2}$ with all edges
directed from $W_{1}$ to $W_{2}$. 
\begin{lem}
\label{lem:one-biclique-basic}Let $H$ be a weakly connected $1$-degenerate digraph with
maximum degree $\Delta$ on $m$ vertices $v_{1},\ldots,v_{m}$, and
let $T$ be an arbitrary tournament. If there exist sets $U_{1},\ldots,U_{m}\subseteq V(T)$, each
of size $M\geq 2m\Delta$ such that if there is no embedding $\phi:H\hookrightarrow T$
satisfying $\phi(v_{i})\in U_{i}$, then $T$ contains a $1$-dense
pair with size at least $M/(m(\Delta+1))$.
\end{lem}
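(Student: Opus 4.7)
The plan is to attempt a greedy embedding of $H$ into $T$ via the framework just described, and to show that any failure of the procedure must expose a large $1$-dense pair. Since $H$ is weakly connected and $1$-degenerate, its underlying undirected graph is a tree, so I may relabel the vertices $v_{1},\dotsc,v_{m}$ in a $1$-degenerate ordering; for each $j\ge 2$ there is then a unique earlier neighbor, denoted $p(j)<j$. Set $K:=M/(m(\Delta+1))$ as the target size of the biclique and fix a threshold $\theta$ of magnitude roughly $K\Delta+m$ to be used in the greedy procedure.

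Run the greedy embedding with this threshold: at step $t$, try to choose $\phi(v_t)\in U_t^{(t-1)}$ such that for every $j>t$ with $p(j)=t$, $|N^{\pm}(\phi(v_t))\cap U_j^{(t-1)}|\ge\theta$, where the sign is determined by whether $v_t\to v_j$ or $v_j\to v_t$ in $H$. If at some step $t$ no such $\phi(v_t)$ exists, declare a failure; otherwise the greedy run yields the desired embedding of $H$ into $T$. In the failure case, every $w\in U_t^{(t-1)}$ is ``bad'' for at least one $j>t$ with $p(j)=t$, in the sense that the corresponding restricted neighborhood has size less than $\theta$. Since $v_t$ has at most $\Delta$ neighbors in $H$, by pigeonhole there exist a specific such $j$ and a subset $W\subseteq U_t^{(t-1)}$ with $|W|\ge |U_t^{(t-1)}|/\Delta$ on which every $w$ is bad for this single $v_j$. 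The choice of $\theta$ ensures the invariant $|U_t^{(t-1)}|\ge K\Delta$ is maintained throughout the greedy run (via one parent-restriction followed by at most $m$ singleton removals), so $|W|\ge K$.

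To extract a genuine $1$-dense pair from this almost-dense configuration, assume by symmetry that $v_t\to v_j$, so each $w\in W$ has fewer than $\theta$ out-neighbors in $U:=U_j^{(t-1)}$. Since $p(j)=t$, the set $U_j$ has not yet been parent-restricted, so $|U|\ge M-m$. Now I perform a greedy biclique extraction in the bipartite graph on $W\times U$ whose edges are the pairs $(w,u)$ with $u\to w$: initialize $W'=\emptyset$ and $U'=U$, then iteratively add a vertex $w\in W$ to $W'$ and update $U'\leftarrow U'\cap N^{-}(w)$, losing at most $\theta-1$ vertices of $U'$ at each step. After $K$ iterations, $|W'|=K$ and $|U'|\ge |U|-K\theta$; by construction, every edge between $U'$ and $W'$ is oriented from $U'$ to $W'$, so this yields a $1$-dense pair of size $\min(K, |U|-K\theta)$.

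The main obstacle is the delicate parameter balancing: $\theta$ must be large enough for the pigeonhole step to force $|W|\ge K$ (which requires $\theta\gtrsim K\Delta+m$) and yet small enough for the biclique extraction to satisfy $|U|-K\theta\ge K$ (which requires $K(\theta+1)\le M-m$). Substituting $K=M/(m(\Delta+1))$ reduces these to inequalities in $M$ that must be verified using the hypothesis $M\ge 2m\Delta$, with some minor casework to handle the degenerate regime where $K$ rounds down to $1$ and the conclusion is trivial.
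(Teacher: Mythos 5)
Your approach has a genuine gap that cannot be fixed by tuning constants: the parameter balancing you defer to the end is actually impossible for large $M$. To keep the invariant $|U_t^{(t-1)}|\ge K\Delta$ alive through the greedy run, you need $\theta\gtrsim K\Delta$. But the biclique extraction produces a $1$-dense pair of size at most roughly $|U|/\theta \lesssim (M-m)/(K\Delta)$, which (after substituting $K=M/(m(\Delta+1))$) is at most about $m(\Delta+1)/\Delta$ --- a quantity that is bounded independently of $M$. Meanwhile the target size $K=M/(m(\Delta+1))$ grows linearly in $M$, so as soon as $M\gg m^2\Delta$ your extracted pair is far smaller than required. Concretely, with $\Delta=2$, $m=3$, $M=1000$ you would need a $1$-dense pair of size $\ge 111$, but your method caps out near $4$ or $5$. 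The hypothesis $M\ge 2m\Delta$ is a lower bound on $M$ only, so the lemma must hold for arbitrarily large $M$ and your proof does not.

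The underlying issue is that a threshold $\theta>0$ leaves you with an ``almost-$1$-dense'' structure, and converting that to a genuine $1$-dense pair by greedy extraction inherently loses a factor of $\theta$. The paper's proof sidesteps this entirely by never introducing a positive threshold. It first replaces the $U_i$ by pairwise-disjoint subsets $V_i$ of size $M/m$, orders the tree so that each $v_i$ has at most one later neighbor (leaves-to-root order, the reverse of yours), and tracks for every $t$ the set $W_t\subseteq V_t$ of vertices $w$ for which the rooted subtree $H[S_t]$ embeds with $v_t\mapsto w$. Taking the first $t$ with $|W_t|<K$ gives the minimality needed: for each child index $i_j$, $|W_{i_j}|\ge K$, and the complement $V_t\setminus W_t$ decomposes as $X_1\cup\dotsb\cup X_s$ where each $X_j$ consists of vertices with \emph{no} appropriately oriented edge into $W_{i_j}$. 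Since $s\le\Delta$ and $|V_t|=M/m$, some $X_j$ has size $\ge K$, and the pair $(X_j,W_{i_j})$ is already a genuine $1$-dense pair of size $\ge K$, with no extraction needed. This bottom-up dynamic programming over the tree, combined with the ``first small $W_t$'' minimality trick, is the key idea your greedy-with-threshold approach is missing.
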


\begin{proof}
We begin by picking subsets $V_1 \subseteq U_1,\dotsc,V_m \subseteq U_m$, each of size $M/m$, such that $V_1,\dotsc,V_m$ are pairwise disjoint. We can do this greedily, by first picking an arbitrary subset $V_1 \subseteq U_1$ of size $M/m$, then picking an arbitrary subset $V_2 \subseteq U_2 \setminus V_1$ of size $M/m$, then picking $V_3 \subseteq U_3 \setminus (V_1 \cup V_2)$, and so on. At the $i$th step, we have deleted at most $(i-1)M/m\leq (m-1)M/m$ elements from $U_i$, so at least $M/m$ elements remain, from which we pick $V_i$ arbitrarily.

The remainder of the proof is very similar to that of the previous lemma, though it will be
more convenient to work with a slightly different setup than before.
Let the vertices of $H$ be ordered so that each $v_{i}$ has at most
one neighbor $v_{j}$ with $j>i$ (this is the reverse of the usual
degenerate ordering). For each $t$, let $S_{t}$ denote the set of
vertices $v_{i}$ with $i\le t$ that are connected to $v_i$ by a path
of vertices whose indices are monotonically increasing, including
$v_{t}$ itself. In other words, we set $S_{1}=\{v_{1}\}$, let $N_{t}(v_{t})$
denote the set of vertices $v_{i}$ adjacent to $v_{t}$ (in either
orientation) with $i<t$, and set
\[
S_{t}=\{v_{t}\}\cup\bigcup_{i\in N_{t}(v_{t})}S_{i}.
\]

We define $W_{t}\subseteq V_{t}$ to be the set of all $w\in V_{t}$
for which there exists an embedding $\phi:H[S_{t}]\hookrightarrow T$
mapping each $v_{i}$ into $V_{i}$ for $v_{i}\in S_{t}$, and mapping $v_t$ to $w$. We know
$W_{n}=\varnothing$, since otherwise there is an embedding of $H$
into the sets $V_{1},\dotsc,V_{m}$, contradicting our assumption.
Let $t\le n$ be the minimum index such that $|W_{t}|<M/(m(\Delta+1))$;
note that $t>1$ since $W_{1}=V_{1}$ has size $M/m$. Let $N_{t}(v_{t})=\{v_{i_{1}},\dotsc,v_{i_{s}}\}$.
Our maximum degree assumption implies $s\le\Delta$, while if $s=0$,
then $W_{t}=V_{t}$, contradicting our assumption that $|W_{t}|<M/(m(\Delta+1))$.
Thus, $1\le s\le\Delta$. 

By definition, $W_{t}$ is precisely the set of $w\in V_{t}$ adjacent
in the appropriate orientation to at least one vertex in each of $W_{i_{1}},\dotsc,W_{i_{s}}$.
Let $X_{1},\dotsc,X_{s}\subseteq V_{t}$ denote the choices of $w$
which do not have any edge in the appropriate orientation to $W_{i_{1}},\dotsc,W_{i_{s}}$,
respectively. We get $V_{t}=W_{t}\cup X_{1}\cup\dotsb\cup X_{s}$.
Since $s\leq\Delta$ and $|W_{t}|\leq M/(m(\Delta+1))$, we see that
there exists some $j$ for which $|X_{j}|\geq M/(m(\Delta+1))$. Moreover,
since $t$ was taken to be minimal, we have that $|W_{i_{j}}|\geq M/(m(\Delta+1))$
as well. This yields a pair of sets $(X_{j},W_{i_{j}})$ where all
edges between them are oriented the same way, and both sets have size
at least $M/(m(\Delta+1))$. This is the desired $1$-dense pair.
\end{proof}
We remark that we expect the $m$ dependence in Lemma \ref{lem:one-biclique-basic} to be unnecessary, and proving this would improve our results for random digraphs; for details, see Conjecture \ref{conj:better-biclique} and the surrounding discussion. 

We do not use Lemma \ref{lem:one-biclique-basic} directly, but only the following simple corollary. It allows us to find a large $1$-dense pair in $T$ whenever $T$ does not contain a copy of some fixed oriented forest with small maximum degree and small weakly connected components.
\begin{lem}\label{lem:one-biclique}
Let $H$ be a $1$-degenerate digraph with maximum degree $\Delta$ and vertices $v_1,\dotsc,v_n$, and suppose that every weakly connected component of $H$ has at most $m$ vertices. Let $T$ be an arbitrary tournament. For any collection of sets $V_{1},\ldots,V_{n}\subseteq V(T)$, each
of size $M\geq 3n\Delta$, either there is an embedding $\phi:H\hookrightarrow T$
satisfying $\phi(v_{i})\in V_{i}$, or $T$ contains a $1$-dense
pair with size at least $M/(4m\Delta)$.
\end{lem}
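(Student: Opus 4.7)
The plan is to iteratively apply Lemma~\ref{lem:one-biclique-basic} to the weakly connected components of $H$ one at a time, keeping careful track of how the candidate sets shrink. Decompose $H$ into its weakly connected components $H_1,\dotsc,H_c$, where each $|V(H_j)| \le m$ by hypothesis and each $H_j$ is itself a weakly connected $1$-degenerate digraph with maximum degree at most $\Delta$. Since there are no edges between distinct components, embeddings of different components combine into a valid embedding of $H$, provided they use disjoint images in $T$.

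I would process the components in order $j=1,2,\dotsc,c$, maintaining a set $X \subseteq V(T)$ of vertices that have already been used as images. Before processing $H_j$, I apply Lemma~\ref{lem:one-biclique-basic} with the digraph $H_j$, the restricted candidate sets $V_i \setminus X$ for $v_i \in V(H_j)$, and the tournament $T$. The crucial bookkeeping is that $|X| \le \sum_{k<j}|V(H_k)| \le n$, so each restricted candidate set has size at least $M-n$. Using $M \ge 3n\Delta$, we have $M - n \ge M(1 - \tfrac{1}{3\Delta}) \ge \tfrac{2M}{3} \ge 2n\Delta \ge 2m\Delta$, which is exactly the lower bound required to invoke Lemma~\ref{lem:one-biclique-basic} on the component $H_j$ (whose vertex count is at most $m$).

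Now Lemma~\ref{lem:one-biclique-basic} has two possible outcomes. If it produces an embedding of $H_j$ into $T$ with $\phi(v_i) \in V_i \setminus X$, then I add the images of $V(H_j)$ to $X$ and move on to the next component. If for \emph{every} $j$ this succeeds, the union of the component embeddings gives the desired embedding $\phi \colon H \hookrightarrow T$ with $\phi(v_i) \in V_i$. Otherwise, the lemma supplies a $1$-dense pair in $T$ of size at least
\[
\frac{M-n}{m(\Delta+1)} \ge \frac{2M/3}{2m\Delta} = \frac{M}{3m\Delta} \ge \frac{M}{4m\Delta},
\]
where I used $M-n \ge \tfrac{2M}{3}$ and $\Delta+1 \le 2\Delta$ for $\Delta \ge 1$. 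This is the desired conclusion.

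There is no genuine obstacle here beyond verifying these two numerical inequalities, which both follow comfortably from the assumption $M \ge 3n\Delta$; the slight looseness in the exponent (getting $M/(4m\Delta)$ rather than the sharper $M/(3m\Delta)$) is deliberate, giving some room to absorb the shrinkage from previously used images. The only conceptual point to check is that restricting candidate sets to $V_i \setminus X$ before invoking the connected-component lemma automatically ensures injectivity of the combined embedding across components, which is immediate from the construction.
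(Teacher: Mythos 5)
Your proof is correct and essentially the same as the paper's: both apply Lemma~\ref{lem:one-biclique-basic} to the components one at a time, shrinking the candidate sets by removing previously used image vertices and checking that the size bound $M \geq 3n\Delta$ absorbs this loss. The paper phrases it as an induction on the number of components (deleting the image of the first $r-1$ components before handling the $r$-th), while you write out the unrolled iteration with an explicit bookkeeping set $X$; the numerical verifications differ in trivial detail but are equivalent.
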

\begin{proof}
Let the weakly connected components of $H$ be $C_1,\dotsc,C_r$. We prove the result by induction on $r$. The base case, $r=1$, follows immediately from Lemma \ref{lem:one-biclique-basic}, since $n \geq m$ and $\Delta+1 \leq 2\Delta$. For the inductive step, let $H'$ be the induced subgraph of $H$ consisting of the weakly connected components $C_1,\dotsc,C_{r-1}$. By the inductive hypothesis, either $T$ contains a $1$-dense pair of size at least $M/(4m \Delta)$, in which case we are done, or else there is an embedding $\phi$ of $H'$ into $T$ satisfying that $\phi(v_i) \in V_i$ for all $v_i \in V(H')$. Let $T'$ be the tournament obtained by deleting the image of $\phi$ from $T$, and similarly let $V_1',\dotsc,V_n'$ be obtained by deleting the image of $\phi$ from $V_1,\dotsc,V_n$. Since we have deleted at most $n \leq n \Delta$ vertices, each $V_i'$ has size at least $M' \geq 2n \Delta\geq 2m \Delta$. Therefore, by Lemma \ref{lem:one-biclique-basic}, either $T'$ contains a $1$-dense pair of size at least $M'/(2m \Delta)\geq M/(4m \Delta)$, in which case so does $T$, or else there is an embedding of $H[C_r]$ into $T'$ mapping each $v_i \in C_r$ into $V_i'$. Since we deleted the image of $\phi$ from $T$ to define $T'$, such an embedding yields an embedding of $H$ into $T$, completing the induction.
\end{proof}

Our final basic greedy embedding lemma is the following, which shows that if we assume appropriate density conditions on the sets in which we are trying to greedily embed $H$, then we are guaranteed not to fail in the embedding. In Subsection \ref{subsec:inner-stage}, we use this lemma as a basic building block. Again, the undirected analogue of this lemma is well-known, e.g.\ \cite[Lemma 2]{GrRoRu}.
\begin{lem}
\label{lem:inner-stage}Let $H$ be an acyclic digraph with maximum
degree $\Delta$ on $n$ vertices $v_{1},\ldots,v_{n}$, and let $T$
be a tournament containing subsets $V_{1},\ldots,V_{n}\subseteq V(T)$,
each of size at least $4n$. If for every edge $v_{i}\rightarrow v_{j}$
in $H$, $(V_{i},V_{j})$ is a $(1-\frac{1}{8\Delta^{2}})$-dense
pair, then $T$ contains a copy of $H$.
\end{lem}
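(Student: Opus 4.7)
The plan is to apply the greedy embedding framework from the beginning of this section directly, embedding $v_1, \ldots, v_n$ in the given order into the candidate sets $V_1, \ldots, V_n$. The key is a quantitative invariant: for every unembedded vertex $v_j$ (i.e., $j > t$), we will maintain
\[
|U_j^{(t)}| \geq |V_j|\left(1 - \frac{d_t(v_j)}{4\Delta}\right) - t,
\]
where $d_t(v_j)$ denotes the number of neighbors of $v_j$ already embedded by the end of round $t$. Since $d_t(v_j) \leq \Delta$ and $|V_j| \geq 4n$, this invariant implies $|U_j^{(t)}| \geq 3|V_j|/4 - n \geq |V_j|/2$ throughout the process.

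To extend the invariant from time $t-1$ to time $t$, we need to pick $w = \phi(v_t) \in U_t^{(t-1)}$ such that, for every unembedded neighbor $v_j$ of $v_t$, the vertex $w$ has at most $|V_j|/(4\Delta)$ non-edges to $V_j$ in the appropriate orientation (out-going if $v_t \to v_j$, in-coming if $v_j \to v_t$). Call such a $w$ \emph{good}. A routine check using the update rule shows that selecting a good $w$ preserves the invariant at step $t$: for each neighbor $v_j$, the candidate set loses at most an additional $|V_j|/(4\Delta) + 1$ vertices, which matches the allowed decrement in the invariant.

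The heart of the argument is bounding the number of \emph{bad} $w \in V_t$. By the $(1 - 1/(8\Delta^2))$-density condition, the total number of wrong-orientation edges between $V_t$ and any single neighbor set $V_j$ is at most $|V_t||V_j|/(8\Delta^2)$, so a Markov-type count shows that the number of $v_j$-bad vertices in $V_t$ is at most $|V_t|/(2\Delta)$. Summing over the at most $\Delta$ unembedded neighbors of $v_t$, the total number of bad $w$ in $V_t$ is at most $|V_t|/2$. Since our invariant guarantees $|U_t^{(t-1)}| \geq 3|V_t|/4 - (t-1) > |V_t|/2$, there must exist a good $w \in U_t^{(t-1)}$, which we take as $\phi(v_t)$. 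Iterating for $n$ rounds produces the desired embedding of $H$ in $T$.

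The main obstacle is a delicate balance: the per-step multiplicative loss in $|U_j^{(t)}|$ must be small enough that $|U_t^{(t-1)}|$ stays comfortably above the number of bad $w$, while simultaneously the per-neighbor threshold $|V_j|/(4\Delta)$ must be generous enough that few $w$ are bad. The specific choice of threshold $1/(4\Delta)$ is calibrated precisely so that, under the hypotheses $|V_j| \geq 4n$ and density $1 - 1/(8\Delta^2)$, the bad count ($\leq |V_t|/2$) is strictly less than the lower bound on $|U_t^{(t-1)}|$ ($\geq 3|V_t|/4 - n$). Any weaker density hypothesis would force a coarser threshold and break this balance.
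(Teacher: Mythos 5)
Your proof is correct and follows essentially the same strategy as the paper's: use the $(1-\frac{1}{8\Delta^2})$-density condition to bound, via a Markov-type count, the fraction of vertices of $V_t$ that would doom any future candidate set, and then run the greedy embedding choosing only ``good'' vertices. The only difference is bookkeeping: the paper re-sorts the vertices into topological order and prunes each $V_i$ to a ``good'' subset $U_i$ once at the start (so that greedy embedding into the $U_i$'s never fails), whereas you keep the given order, prune dynamically by discarding bad choices at each step, and track a running invariant on $|U_j^{(t)}|$. These are interchangeable, and your version has the small advantage of not requiring a topological re-sort.
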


Note that the sets $V_{i}$ are allowed to overlap\footnote{However, if $v_i \to v_j$, then the assumption that $(V_i,V_j)$ is $(1- \frac{1}{8 \Delta^2})$-dense implies that $V_i\cap V_j$ cannot be too large.} or even be identical.
This is crucial; for instance, if $H$ has height $h$ then our choice
of $V_{i}$'s will take only $h$ distinct values.
\begin{proof}
Every acyclic digraph has a vertex ordering where all edges point
forward, so we may reorder the vertices to assume all edges $v_{i}\rightarrow v_{j}$
satisfy $i<j$. We now run the greedy embedding process for $H$
into $T$ with the given $U_{i}$, using essentially the same framework
as we used in Lemma \ref{lem:dense-pair}. The key difference is that before,
the greedy embedding process could fail and terminate prematurely,
whereas the additional assumptions here guarantee that greedy embedding
runs to completion.

We begin by refining the sets $V_{1},\dotsc,V_{n}$. Namely, for every
edge $v_{i}\to v_{j}$ in $H$, let $V_{i,j}\subseteq V_{i}$ be the
set of vertices $w\in V_{i}$ with $|N^{-}(w)\cap V_{j}|\ge|V_{j}|/(4\Delta)$.
Since the pair $(V_{i,},V_{j})$ is $(1-\frac{1}{8\Delta^{2}})$-dense,
there are at most $|V_{i}||V_{j}|/(8\Delta^{2})$ edges directed from
$V_{j}$ to $V_{i}$ in total, so
\[
|V_{i,j}|\cdot\frac{|V_{j}|}{4\Delta}\leq\frac{|V_{i}||V_{j}|}{8\Delta^{2}},
\]
and thus $|V_{i,j}|\leq|V_{i}|/(2\Delta)$. Each $v_{i}$ has at most
$\Delta$ out-neighbors $v_{j}$, which implies that $|\bigcup_{j}V_{i,j}|\leq|V_{i}|/2$.
Therefore, if $U_{i}\coloneqq V_{i}\setminus(\bigcup_{j}V_{i,j})$,
then $|U_{i}|\geq|V_{i}|/2$, and every vertex in $U_{i}$ has
at most $|V_{j}|/(4\Delta)\leq|U_{j}|/(2\Delta)$ in-neighbors in
$U_{j}$ for every $j$ such that $v_{i}\to v_{j}$.

Now, we exhibit an embedding $\phi:H\hookrightarrow T$
by picking $\phi(v_{t})\in U_{t}$ inductively for each $t\in[n]$,
as follows. Having picked $\phi(v_{1}),\ldots,\phi(v_{t-1})$, we
claim that there is at least one valid candidate for $\phi(v_{t})\in U_{t}$
which is consistent with the previous choices. Indeed, there are at
most $\Delta$ choices of $i<t$ such that $v_{i}\to v_{t}$. For
every such $i$, we have that $\phi(v_{i})\in U_{i}$, which means
that $\phi(v_{i})$ has at most $|U_{t}|/(2\Delta)$ in-neighbors
in $U_{t}$. Hence, at least $|U_{t}|/2$ vertices in $U_{t}$ are
out-neighbors of $\phi(v_{i})$ for all $i$ such that $v_{i}\to v_{j}$.
Moreover, at most $t-1$ vertices of $U_{j}$ have been picked as
outputs of $\phi$, and thus the number of possible candidates for
$\phi(v_{t})$ is at least
\[
\frac{|U_{t}|}{2}-(t-1)\geq\frac{|V_{t}|}{4}-n+1\geq1,
\]
by our assumption that $|V_{j}|\geq4n$. This shows that at every
step $t\le n$ we can pick vertex $\phi(v_{t})$ such that $\{\phi(v_{1}),\ldots,\phi(v_{t})\}$
is a copy of $H[\{v_{1},\ldots,v_{t}\}]$ in $T$, and so $T$ contains
a copy of the entirety of $H$, as desired.
\end{proof}
We remark that one can prove a strengthening of Lemma 2.4, where the assumption is weakened to each pair $(V_i, V_j)$ being merely $(1-\Omega(\frac 1\Delta))$-dense. This can be done by replacing the greedy embedding argument by a random embedding technique, using the Lov\'asz local lemma. For details, we refer the reader to \cite[Lemma 4.5]{CoFoSuII}, where the analogous undirected result is proved using this technique.

\subsection{The outer stage}

In this subsection, we show how two of our basic building blocks, Lemmas \ref{lem:dense-pair} and \ref{lem:one-biclique}, can be iterated to construct, in any $H$-free tournament $T$, an ``approximate blowup'' of $H$. This will be a large collection of vertex sets which correspond to the vertices of $H$, such that the edges between sets are mostly oriented in the correct direction. Before stating the result precisely, we need some definitions.

Let $\{0,1\}^{*}$
denote the set of all finite binary strings. Recall that a \emph{prefix
code} is a set $C\subset\{0,1\}^{*}$ with the property that no element
of $C$ is a prefix of another element of $C$. The \emph{depth }of $C$ is
defined as the maximum length of an element of $C$. Let $\prec$ denote the lexicographic ordering on $\{0,1\}^*$, namely the ordering in which $x\prec y$ if $x$ is a proper prefix of $y$ or if $x_i < y_i$ where $i$ is the first index for which $x_i \neq y_i$.
\begin{defn}
Given an acyclic digraph $H$, a \emph{prefix labeling }of $H$ is
a surjective map $\rho:V(H)\to C$ for some prefix code $C\subset\{0,1\}^{*}$,
with the property that if $v_{i}\to v_{j}$ is an edge of $H$, then
either $\rho(v_{i})=\rho(v_{j})$ or $\rho(v_{i})\prec\rho(v_{j})$. The map $\rho$ naturally defines a graph structure on $C$, where we say that two codewords $x,y$ are \emph{adjacent under $\rho$} if there exists some edge between the sets $\rho^{-1}(x)$ and $\rho^{-1}(y)$. By the \emph{maximum degree of $\rho$}, we mean the maximum degree of this graph on $C$. If $\rho^{-1}(x)$ is an independent set for every $x\in C$, then
we call $\rho$ a \emph{prefix coloring. }Less stringently, we call
$\rho$ a \emph{forest prefix labeling} if $\rho^{-1}(x)$ is a directed
forest (or equivalently, a $1$-degenerate digraph) for every $x\in C$. By the \emph{maximum component size} of $\rho$, we mean the maximum number of vertices of any weakly connected component in $\rho^{-1}(x)$, over all $x \in C$. Thus, $\rho$ is a prefix coloring if and only if its maximum component size is $1$.
\end{defn}

Thus, we see that prefix colorings of $H$ correspond to colorings
of the underlying undirected graph of $H$, with the property that
the palette of colors $C$ is a prefix code, and that the lexicographic
order on $C$ is consistent with the edge directions in $H$. Similarly,
a forest prefix labeling is in particular a partition of the underlying
undirected graph into sets which induce forests, which corresponds
to the undirected problem of \emph{vertex arboricity. }However, for
both concepts, we will crucially use the additional structure given
both by the edge directions of $H$ and by the structure of the prefix
code $C$. 

For a binary string $x$, let us denote by $x0$ and $x1$ the
strings obtained by appending a $0$ or $1$, respectively, to the end of $x$.
For a prefix code $C$, we denote by $C_{0}(x)$ the set of all elements
$y\in C$ which have $x0$ as a prefix, and by $C_{1}(x)$
the set of elements that have $x1$ as a prefix. Suppose that
$\rho:V(H)\to C$ is a prefix labeling of an acyclic digraph $H$.
For binary string $x$, let us denote by $a_{0}(x)$ the number of
codewords $y\in C_{0}(x)$ which are adjacent under $\rho$ to
some codeword $z\in C_{1}(x)$. Similarly, $a_{1}(x)$ is the number
of codewords $z\in C_{1}(x)$ that are adjacent under $\rho$ to some
codeword $y\in C_{0}(x)$. Finally, we let
\[
a(x)=\begin{cases}
1 & \text{if }a_{0}(x)=0\text{ or }a_{1}(x)=0,\\
\min(a_{0}(x),a_{1}(x)) & \text{otherwise.}
\end{cases}
\]
In particular, $a(x)=1$ if $x$ is not a proper prefix of any element
of $C$. With this notation, we can now define the key parameters
of $\rho$ that we later use to bound Ramsey numbers. 
\begin{defn}
Let $H$ be an acyclic digraph and $\rho:V(H)\to C$ some prefix labeling
of $H$. We define the \emph{dyadic complexity }of $\rho$ to be the
quantity
\[
\comp(\rho):=\max_{y\in C}\prod_{x\text{ a prefix of }y}a(x).
\]
Additionally, the \emph{depth} of $\rho$, denoted $\depth(\rho)$,
is defined as the depth of $C$, i.e.\ the maximum length of
an element of $C$. 
\end{defn}
To understand these definitions, it is helpful to think of $\{0,1\}^*$ as the vertices of the infinite binary tree. In this setup, the elements of a prefix code $C$ correspond to the leaves of some subtree. A prefix labeling $\rho:V(H) \to C$ is then a partition of the vertices of $H$ into sets labeled by the leaves of this subtree. Two codewords (leaves) are adjacent under $\rho$ if there is an edge between the corresponding vertex subsets of $H$. For a binary string $x$, which should be thought of as a non-leaf vertex of the subtree, $a(x)$ roughly records the ``cost'' of separating the descendants of $x$: it measures how many pairs of codewords adjacent under $\rho$ there are between its descendants on the left and on the right. Because of the structure of the proof, this cost function is somewhat unnatural: it is the minimum of two quantities, each of which is the number of descendants on one side which are adjacent under $\rho$ to any number of descendants on the other side. Moreover, this cost function should be thought of as multiplicative, so that the ultimate cost of the whole labeling---namely the dyadic complexity $\comp(\rho)$---is the product of the costs of all ancestors of $y$, maximized over all $y \in C$.

We remark that the dyadic complexity of a prefix coloring of $H$ is one possible formalization of the notion of multiscale complexity discussed in the introduction. Indeed, if every prefix coloring of $H$ has high dyadic complexity, then $H$ has many edges at ``many different dyadic scales''. All of our upper bounds on oriented Ramsey numbers depend on the dyadic complexity of some prefix labeling of $H$, making formal the intuition that the strength of our upper bound results depends on whether $H$ has high or low multiscale complexity.

In order to embed some acyclic digraph $H$ in a tournament $T$,
we first build a certain structure of vertex subsets of $T$
with high forward edge density between many of the pairs. Then, in
the inner stage of the embedding process, we use such
a structure to find a copy of $H$. The structure we build depends
on a parameter $\delta\in[0,1]$, as well as on a prefix labeling
$\rho$ of $H$. We now define this structure, and next prove
a lemma showing how to find such a structure.
\begin{defn}
Let $\delta\in[0,1]$ be some parameter, let $H$ be an acyclic digraph,
and let $\rho:V(H)\to C$ be some prefix labeling of $H$, for some
prefix code $C$. Let $T$ be any tournament. A \emph{$(\rho,\delta)$-skeleton
}is a collection $\{V_{x}\}_{x\in C}$ of (not necessarily disjoint)
vertex subsets of $T$, indexed by the codewords in $C$, with the
property that if $x\prec y$ are elements of $C$ that are adjacent under $\rho$, then $(V_{x},V_{y})$
is a $\delta$-dense pair. We define the \emph{size }of a $(\rho,\delta)$-skeleton
to be $\min_{x\in C}|V_{x}|$. 
\end{defn}

Our next lemma shows how to iterate Lemma \ref{lem:dense-pair} to
construct a $(\rho,\delta)$-skeleton in any sufficiently large $H$-free
tournament $T$. Roughly speaking, since the structure of a $(\rho,\delta)$-skeleton
is based on the binary tree structure of $C$, we may construct such
a skeleton by performing a depth-first search, and applying Lemma
\ref{lem:dense-pair} every time we need to split an existing node
into two daughter nodes in this binary tree. 
\begin{lem}
\label{lem:build-skeleton} Let $c\in(0,1)$ be some parameter, let
$H$ be a $d$-degenerate acyclic digraph with maximum degree $\Delta$,
and let $\rho:V(H)\to C$ be some prefix labeling of $H$, for some
prefix code $C$. Suppose that $T$ is an $H$-free tournament on $N$ vertices, with
\[
N\geq\left(\frac{4^{d+1}\Delta}{c^{d}}\right)^{\depth(\rho)}\comp(\rho)^{d}n.
\]
Then $T$ contains a $(\rho,1-c)$-skeleton of size at
least $\left(\frac{c^{d}}{4^{d+1}\Delta}\right)^{\depth(\rho)}\comp(\rho)^{-d}N$.
\end{lem}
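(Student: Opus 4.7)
My plan is to proceed by strong induction on $\depth(\rho)$, constructing the $(\rho, 1-c)$-skeleton via a depth-first traversal of the binary prefix tree associated with $C$. At each internal node $x$ we maintain an inherited subset $T_x \subseteq V(T)$, with $T_\emptyset = V(T)$, and at each leaf codeword $y \in C$ we declare $V_y = T_y$; thus the skeleton is entirely determined by the recursive splitting strategy. At an internal node $x$ where $a_0(x) = 0$ or $a_1(x) = 0$, no cross-adjacent codeword pair has $x$ as its last common ancestor, so we may safely pass $T_{x0} = T_{x1} = T_x$ down to both children (since $a(x) = 1$, no $\comp$ factor is incurred and no size loss occurs here). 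When $a_0(x), a_1(x) \geq 1$, we apply Lemma~\ref{lem:dense-pair} to the $H$-free subtournament $T[T_x]$ with an appropriately chosen parameter $c' \leq c$, producing a $(1-c')$-dense pair $(W_0, W_1)$ with $|W_0|, |W_1| \geq (c')^d|T_x|/(2\Delta)$, and we set $T_{x0} = W_0$, $T_{x1} = W_1$ and recurse.

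The crucial density-propagation observation is this: if $(W_0, W_1)$ is a $(1-c')$-dense pair and $V_y \subseteq W_0$, $V_z \subseteq W_1$ satisfy $|V_y|\,|V_z|/(|W_0|\,|W_1|) \geq c'/c$, then $(V_y, V_z)$ is $(1-c)$-dense, since the number of backward edges between $V_y$ and $V_z$ is bounded by the $c'|W_0||W_1|$ backward edges between $W_0, W_1$. We calibrate $c'$ at node $x$ to scale inversely with $a(x)$, roughly as $c'(x) \approx c/a(x)$, so that this condition is satisfied for every cross-pair of codewords with $x$ as last common ancestor. With this choice, the per-split size loss is roughly $c^d/(a(x)^d \cdot 2\Delta)$, and the cumulative loss along the worst root-to-leaf path multiplies the $a(x)^d$ factors to give at most $\comp(\rho)^d$; this matches the $\comp(\rho)^{-d}$ factor in the claimed skeleton size bound (after absorbing small multiplicative constants).

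The main obstacle is the careful calibration of $c'(x)$: it must be small enough that the density condition holds under arbitrary subsequent restriction of $W_0, W_1$ in the subtree recursion, yet large enough that the sets produced by Lemma~\ref{lem:dense-pair} remain usable in size. Tracking the constants to match the exact bound $(c^d/(4^{d+1}\Delta))^{\depth(\rho)}$ (with the extra $4^{d+1}$ factor rather than just $2$) likely requires a cleaning step at each split, discarding a constant fraction of each dense pair—for instance, restricting to vertices in $W_0$ whose out-neighborhood covers almost all of $W_1$, and symmetrically—to guarantee robust density under restriction to descendant subsets. The induction then closes by verifying that, with these choices, both the size bound and the $(1-c)$-dense pair condition hold simultaneously for every pair of codewords adjacent under $\rho$.
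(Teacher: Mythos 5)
Your proposal gets the broad architecture right---depth-first traversal of the prefix tree, an application of Lemma~\ref{lem:dense-pair} at each split with a parameter on the order of $c/a(x)$, and a recognition that some extra cleaning is needed---but the central density mechanism you propose does not actually work, and the cleaning step you suggest does not fix it.

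The ``density-propagation observation'' you state is correct as an inequality, but it requires $|V_y|\,|V_z|/(|W_0|\,|W_1|) \geq c'/c$, and with $c' \approx c/a(x)$ this forces $|V_y|\,|V_z|/(|W_0|\,|W_1|) \gtrsim 1/a(x)$. This ratio is not controlled by $a(x)$ at all: $V_y$ and $V_z$ are obtained from $W_0, W_1$ by further recursive splits, each of which shrinks the set by a factor of roughly $c^d/(4^{d+1}\Delta a(\cdot)^d)$, so after $\depth(\rho)$ more levels the ratio can be smaller than $1/a(x)$ by an exponential factor. The symmetric cleaning you suggest (keeping only vertices of $W_0$ whose out-neighborhood covers most of $W_1$, and vice versa) does not rescue this: a vertex can have out-degree $(1-\varepsilon)|W_1|$ to $W_1$ while having out-degree zero to some much smaller $V_z\subseteq W_1$. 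To guarantee $(1-c)$-density of $(V_y,V_z)$ robustly, you need a per-vertex degree bound of $V_z$ into $V_y$ itself, not into a superset.

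The paper's resolution is a genuinely asymmetric depth-first order that you are missing. After producing the dense pair $(W_0,W_1)$ (with the convention $a_0(x)\le a_1(x)$), it first performs a mild cleaning $V_{x0}\subseteq W_0$ (keep vertices with in-degree at most $2c_x|W_1|$ from $W_1$), then \emph{fully recurses} on the $x0$-subtree so that all final sets $V_{y_i}$ below $x0$ are determined, and only then defines $V_{x1}$ by deleting from $W_1$ the vertices whose out-degree to any one of the finished $V_{y_i}$ exceeds $c|V_{y_i}|$. The preliminary cleaning on $V_{x0}$ is what bounds the number of such bad vertices: since every $v\in V_{y_i}\subseteq V_{x0}$ has in-degree $\le 2c_x|W_1|$, the number of edges from $W_1$ into $V_{y_i}$ is at most $2c_x|W_1|\,|V_{y_i}|$, so at most $(2c_x/c)|W_1|=|W_1|/(2a_0(x))$ vertices of $W_1$ are bad for a given $i$. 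Summing over the $a_0(x)$ relevant codewords loses only half of $W_1$. The resulting $V_{x1}$ then has a per-vertex out-degree bound of $c|V_{y_i}|$ into each final $V_{y_i}$, which immediately gives the $(1-c)$-density of every cross-pair $(V_{y_i},V_z)$ for all $V_z\subseteq V_{x1}$, regardless of how small $V_z$ becomes. This ``resolve one child fully, then clean the other against the resolved sets'' step is the missing idea; without it, the density claim for deeply nested codeword pairs does not close.
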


\begin{proof}
For every binary string $x\in\{0,1\}^{*}$ which is a prefix of some
codeword in $C$, let $H_{x}$ denote the subgraph of $H$ induced
by the vertices $v$ for which $x$ is a prefix of $\rho(v)$. We
will construct, for every such $x\in\{0,1\}^{*}$, a vertex subset
$V_{x}\subseteq V(T)$, with
\[
|V_{x}|\geq\left(\prod_{z\text{ a proper prefix of }x}\frac{c^{d}}{4^{d+1}\Delta a(z)^{d}}\right)N.
\]
We initialize $V_{\varnothing}=V(T)$, where $\varnothing\in\{0,1\}^{*}$
denotes the empty string, and observe that this property holds vacuously
for $V_{\varnothing}$ since $\varnothing$ has no proper prefixes.
In order to construct these vertex sets for other $x$, we proceed
via a depth-first search along the binary tree, as follows. Recall
that for any binary string $x$, the numbers $a_{0}(x)$ and $a_{1}(x)$
are the number of codewords in $C$ beginning with $x0$ and $x1$,
respectively, such that some vertex labeled by that codeword has an
edge to $H_{x1}$ and $H_{x0}$, respectively. If $\min(a_{0}(x),a_{1}(x))=0$,
then we define $V_{x0}=V_{x1}=V_{x}$, and observe that our
desired inequality holds, since $a(x)=1$ in this case. In particular, if $x\in C$ is a codeword,
then we stop the inductive process, since we only wished to define
such vertex subsets for strings $x$ that are the prefix of some codeword.
Now, suppose that $\min(a_{0}(x),a_{1}(x))\geq1$, and assume without
loss of generality\footnote{If $a_1(x) < a_0(x)$, then we swap the roles of $0$ and $1$ and the roles of in- and out-neighbors in this construction.} that $a_{0}(x)\leq a_{1}(x)$. We will first show how to define $V_{x0} \subseteq V_x$ satisfying the desired inequality on its cardinality. We will then proceed to recursively define vertex subsets $V_{y}\subseteq V_{x0}$
for every binary string $y$ prefixed by $x0$. Note that we have
not yet defined the set $V_{x1}$: we are proceeding in a depth-first
fashion, so we will not define $V_{x1}$ until we have defined
$V_{y}$ for every $y$ prefixed by $x0$. This will eventually
happen, since we already described above how to define $V_{y}$ if
$y$ is a codeword of $C$; therefore, we eventually reach the
bottom of our depth-first search (namely a codeword $y\in C$), at
which point we stop going down the tree, and begin to retrace
our steps and traverse back up the tree.

Recall that we assumed that $a_0(x) \leq a_1(x)$, and let $c_{x}=c/(4a_{0}(x)).$ Since 
\begin{align}
|V_{x}|&\geq\left(\prod_{z\text{ a proper prefix of }x}\frac{c^{d}}{4^{d+1}\Delta a(z)^{d}}\right)N\notag\\
&=\frac{4\Delta (4a(x))^d}{c^d} \left(\prod_{z\text{ a proper prefix of }x0}\frac{c^{d}}{4^{d+1}\Delta a(z)^{d}}\right)N\notag
\\
&\geq\frac{\left(c^{d}/(4^{d+1}\Delta)\right)^{\depth(\rho)}\comp(\rho)^{-d}}{c^{d}/(4\Delta(4a(x))^{d})}N\notag\\
&\geq4\Delta c_{x}^{-d}n,\label{eq:V_x-lb}
\end{align}
and since $T$ contains no copy of $H$, we may apply Lemma \ref{lem:dense-pair}
with parameter $c_{x}$. Then this lemma says that $V_{x}$ contains
a $(1-c_{x})$-dense pair $(W_{0},W_{1})$, where $\min(|W_{0}|,|W_{1}|)\geq c_{x}^{d}|V_{x}|/(2\Delta)$.
Let $V_{x0}\subseteq W_{0}$ denote the set of vertices in $W_{0}$
whose in-degree to $W_{1}$ is at most $2c_{x}|W_{1}|$; since there
are at most $(1-c_{x})|W_{0}||W_{1}|$ edges directed from $W_{1}$
to $W_{0}$, we see that $|V_{x0}|\geq|W_{0}|/2$. Therefore,
\begin{equation}
|V_{x0}|\geq\frac{c_{x}^{d}}{4\Delta}|V_{x}|=\frac{c^{d}}{4^{d+1}\Delta a(x)^{d}}|V_{x}|\geq\left(\prod_{z\text{ a proper prefix of }x0}\frac{c^{d}}{4^{d+1}\Delta a(z)^{d}}\right)N,\label{eq:inductive-dyadic-lb}
\end{equation}
since the proper prefixes of $x0$ are just $x$, in addition to
all the proper prefixes of $x$ itself. 

As discussed above, we can now recursively define $V_y$ for all $y$ prefixed by $x0$. It thus only
remains to define $V_{x1}\subseteq V_{x}$, under the assumption
that we have defined $V_{y}\subseteq V_{x0}\subseteq W_{0}$ for
all $y$ prefixed by $x0$. 

To do so, let $y_{1},\dotsc,y_{a_{0}(x)}$ denote the set of codewords
prefixed by $x0$ with the property that some vertex in $\rho^{-1}(y_{i})$
is adjacent to some vertex in $H_{x1}$, noting that there are
exactly $a_{0}(x)$ such codewords by the definition of $a_{0}(x)$.
Let $W_{1}^{(i)}$ denote the subset of $W_{1}$ consisting of vertices
in $W_{1}$ whose out-degree to $V_{y_{i}}$ is at least $c|V_{y_{i}}|$.
Since every vertex in $V_{y_{i}}\subseteq V_{x0}$ has at most
$2c_{x}|W_{1}|$ in-neighbors in $W_{1}$, we see that the total number
of edges directed from $W_{1}$ to $V_{y_{i}}$ is at most $2c_{x}|W_{1}||V_{y_{i}}|$,
and therefore $|W_{1}^{(i)}|\leq(2c_{x}/c)|W_{1}|$. Because of this,
we have that
\[
\left|\bigcup_{i=1}^{a_{0}(x)}W_{1}^{(i)}\right|\leq\sum_{i=1}^{a_{0}(x)}|W_{1}^{(i)}|\leq a_{0}(x)\frac{2c_{x}}{c}|W_{1}|=\frac{|W_{1}|}{2}.
\]
Therefore, we can define $V_{x1}=W_{1}\setminus\left(\bigcup_{i=1}^{a_{0}(x)}W_{1}^{(i)}\right)$,
and we have that $|V_{x1}|\geq|W_{1}|/2$. By the same computation
as in equation (\ref{eq:inductive-dyadic-lb}), we see that this definition
of $V_{x1}$ satisfies our desired lower bound on the size of $V_{x1}$. 

We claim that in this construction, if $y\prec z$ are codewords that are adjacent under $\rho$,
then the pair $(V_{y},V_{z})$ is $(1-c)$-dense. To see this, let
$x$ be the longest common prefix of $y$ and $z$. In the construction
at level $x$, we first proceeded to either $V_{x0}$ or $V_{x1}$
in the depth-first search, depending on the relative sizes of $a_{0}(x)$
and $a_{1}(x)$. In the former case, we ensured that every vertex
in $V_{x1}$ had out-degree at most $c|V_{y}|$ to $V_{y}$, while
in the latter case, we ensured that every vertex in $V_{x0}$ had
in-degree at most $c|V_{z}|$ to $V_{z}$. In either case, we see
that $(V_{y},V_{z})$ is $(1-c)$-dense, since $V_y \subseteq V_{x0}$ and $V_z \subseteq V_{x1}$. Additionally, by the same
computation as in equation (\ref{eq:V_x-lb}), we see that $|V_{y}|\geq\left(\frac{c^{d}}{4^{d+1}\Delta}\right)^{\depth(\rho)}\comp(\rho)^{-d}N$
for every $y\in C$. This verifies all the properties of a $(\rho,1-c)$-skeleton,
and concludes the proof.
\end{proof}

Our next two lemmas are very similar to Lemmas \ref{lem:dense-pair} and \ref{lem:build-skeleton}. Namely, the first shows us how to find a $1$-dense pair in an $H$-free tournament $T$, and the second then iterates the first to form a skeleton of $1$-dense pairs. The main difference between these and the previous results are that for these lemmas, we need strengthened assumptions on $H$ (namely that it has a directed partition into forests). Moreover, the first step in the proof is an application of Lemma \ref{lem:build-skeleton}, and we find the $1$-dense pair by failing to greedily embed $H$ in the skeleton given by Lemma \ref{lem:build-skeleton}.

\begin{lem}
\label{lem:one-biclique-again} Let $d \geq 2$, and suppose that $H$ is a $d$-degenerate
acyclic digraph on $n$ vertices with maximum degree $\Delta$, and
suppose that $\rho:V(H)\to C$ is some forest prefix labeling with
maximum degree $A$ and maximum component size $m$.
Let $T$ be an $H$-free tournament on $N\geq(2^{10}A\Delta^{2})^{d\depth(\rho)}\comp(\rho)^{d}n$
vertices. Then $T$ contains a $1$-dense pair $(U_{1},U_{2})$ of
size at least $m^{-1}(2^{10}A\Delta^{2})^{-d\depth(\rho)}\comp(\rho)^{-d}N$.
\end{lem}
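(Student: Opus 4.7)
The plan is to combine Lemmas~\ref{lem:build-skeleton} and~\ref{lem:one-biclique}. First apply Lemma~\ref{lem:build-skeleton} with $c=1/(16A\Delta)$; a direct computation shows the hypothesis $N\ge(2^{10}A\Delta^{2})^{d\depth(\rho)}\comp(\rho)^{d}n$ implies the hypothesis of Lemma~\ref{lem:build-skeleton}, producing a $(\rho,1-c)$-skeleton $\{V_{x}\}_{x\in C}$ with $|V_{x}|\ge(2^{6d+2}A^{d}\Delta^{d+1})^{-\depth(\rho)}\comp(\rho)^{-d}N$ for every $x$. Next, refine each $V_{x}$ to a subset $V_{x}^{*}$ consisting of those $u\in V_{x}$ such that for every $y\in C$ adjacent to $x$ under $\rho$, $u$ has at least $(1-\tfrac{1}{4\Delta})|V_{y}|$ edges to $V_{y}$ in the direction consistent with the skeleton (out-edges when $x\prec y$, in-edges when $y\prec x$). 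A double-counting argument shows there are at most $4c\Delta|V_{x}|$ ``bad'' vertices with respect to any one adjacent $y$; summing over the at most $A$ neighbors of $x$ yields at most $|V_{x}|/4$ bad vertices in total, so $|V_{x}^{*}|\ge 3|V_{x}|/4$.

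Now attempt to embed $H$ into $T$ by processing the codewords $x_{1}\prec x_{2}\prec\dotsb\prec x_{L}$ of $C$ in lexicographic order. At step $i$, for each $v\in\rho^{-1}(x_{i})$, define $V_{v}^{(i)}\subseteq V_{x_{i}}^{*}$ to consist of those $u\in V_{x_{i}}^{*}$ with $u\in N^{+}(\phi(w))$ for every in-neighbor $w$ of $v$ in a previously embedded forest, further excluding any vertex already used as an image. The prefix labeling property ensures every cross-forest edge incident to $v$ comes from a prior forest, so no forward-looking constraints are needed. Since each relevant $\phi(w)\in V_{y}^{*}$ has at most $|V_{x_{i}}|/(4\Delta)$ non-out-neighbors in $V_{x_{i}}$, and $v$ has at most $\Delta$ such in-neighbors, we get $|V_{v}^{(i)}|\ge\tfrac{3}{4}|V_{x_{i}}|-\tfrac{1}{4}|V_{x_{i}}|-n\ge|V_{x_{i}}|/8$ (since the hypothesis on $N$ easily implies $|V_{x_{i}}|\ge 8n$). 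Apply Lemma~\ref{lem:one-biclique} to $\rho^{-1}(x_{i})$---which is $1$-degenerate with maximum degree at most $\Delta$ and maximum component size at most $m$---with the target sets $V_{v}^{(i)}$, obtaining either an extension of $\phi$ or a $1$-dense pair of size at least $|V_{x_{i}}|/(32m\Delta)$.

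If every application of Lemma~\ref{lem:one-biclique} extends $\phi$, we produce an embedding $H\hookrightarrow T$, contradicting $H$-freeness. Hence some application must fail, yielding a $1$-dense pair. Substituting the lower bound on $|V_{x_{i}}|$ gives a pair of size at least $N/(32m\Delta\cdot(2^{6d+2}A^{d}\Delta^{d+1})^{\depth(\rho)}\comp(\rho)^{d})$; since $(2^{10}A\Delta^{2})^{d\depth(\rho)}/(32\Delta\cdot(2^{6d+2}A^{d}\Delta^{d+1})^{\depth(\rho)})=2^{(4d-2)\depth(\rho)}\Delta^{(d-1)\depth(\rho)}/(32\Delta)\ge 2$ for $d\ge 2$ and $\depth(\rho)\ge 1$, this pair satisfies the claimed size bound $m^{-1}(2^{10}A\Delta^{2})^{-d\depth(\rho)}\comp(\rho)^{-d}N$. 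The main obstacle will be balancing the parameter $c$: it must be small enough that the refinement preserves most of each $V_{x}$ (this requires $cA\Delta\le 1/16$), yet large enough that Lemma~\ref{lem:build-skeleton} still produces sufficiently large skeleton sets; the trivial case $\depth(\rho)=0$ (in which $H$ is itself a forest) is handled by a single direct application of Lemma~\ref{lem:one-biclique}.
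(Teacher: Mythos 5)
Your proof is correct and follows essentially the same route as the paper: apply Lemma~\ref{lem:build-skeleton} with $c = 1/(16A\Delta)$ to produce a skeleton, refine each $V_x$ to remove vertices with abnormal degree to adjacent skeleton sets, greedily embed the forests $\rho^{-1}(x_i)$ in lexicographic order, and extract a $1$-dense pair via Lemma~\ref{lem:one-biclique} at the first step where the embedding fails. The only differences are cosmetic: you perform a two-sided refinement (both in- and out-edges) where the paper's one-sided refinement (only out-degree to later $V_y$'s, which is all the lexicographic-order embedding uses) suffices, and you reconstruct the candidate sets $V_v^{(i)}$ fresh at step $i$ rather than maintaining them iteratively as the paper does. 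One small caveat: your handling of the degenerate case $\depth(\rho)=0$ via ``a single direct application of Lemma~\ref{lem:one-biclique}'' does not quite go through, since the hypothesis then only gives $N \geq n$ rather than the $N \geq 3n\Delta$ needed for that lemma; however, the paper's own proof also implicitly assumes $\depth(\rho)\geq 1$ (its claimed skeleton size bound $\geq 12\Delta n$ fails at depth~$0$), and this case never arises in the applications.
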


\begin{proof}
We first
apply Lemma \ref{lem:build-skeleton} to the prefix labeling $\rho$
and with parameter $c=1/(16A\Delta)$. This lemma outputs
a $(\rho,1-\frac{1}{16A\Delta})$-skeleton $\{V_x\}_{x \in C}$ in $T$ of size at least
\[
\left(\frac{c^{d}}{4^{d+1}\Delta}\right)^{\depth(\rho)}\comp(\rho)^{-d}N\geq 12 \Delta n.
\]
For every pair of codewords $x\prec y$ that are adjacent under $\rho$, let $V_{x,y}$
denote the set of vertices in $V_{x}$ whose in-degree to $V_{y}$
is at least $\frac{1}{8\Delta}|V_{y}|$. Since at most $\frac{1}{16A\Delta}|V_{x}||V_{y}|$
of the edges between $V_{x}$ and $V_{y}$ are directed from $V_{y}$
to $V_{x}$, we see that $|V_{x,y}|\leq\frac{1}{2A}|V_{x}|$. Therefore,
if we define $V_{x}'=V_{x}\setminus\left(\bigcup_{y\succ x}V_{x,y}\right)$,
then we see that $|V_{x}'|\geq \frac 12|V_{x}|$, since there are at most $A$ choices for $y \succ x$ with $x,y$ adjacent under $\rho$. Additionally, every vertex in
$V_{x}'$ has in-degree at most $\frac{1}{8\Delta}|V_{y}|\leq\frac{1}{4\Delta}|V_{y}'|$
from any $V_{y}'$ with $y\succ x$ such that $x,y$ are adjacent under $\rho$.
We now attempt to greedily embed $H$ in these sets $\{V_{x}'\}_{x\in C}$. 

Let the codewords of $C$ be $x_{1},\dotsc,x_{r}$, sorted so that
$i<j$ if and only if $x_{i}\prec x_{j}$. Let $P_{i}=\rho^{-1}(x_{i})$,
so that $P_i$ is an oriented forest, each of whose weakly connected components has at most $m$ vertices. Additionally, $P_1\sqcup \dotsb \sqcup P_r$ forms a directed partition of $V(H)$, which we recall means that every edge of $H$ is oriented from $P_i$ to $P_j$ where $i \leq j$. For every vertex $v\in P_{i}$, we initialize
a set of candidates $U_{i,v}^{(0)}=V_{x_{i}}'$. Inductively, having
defined $U_{i,v}^{(t)}$ for each $i>t$ and each $v\in P_{i}$, we
attempt to pick an embedding $\phi_{t}:H[P_{t}]\hookrightarrow V_{x_{t}}'$,
such that for every $v\in P_{t}$, $\phi_{t}(v)\in U_{t,v}^{(t-1)}$.
If such a $\phi_{t}$ exists, then for each $i>t$ and each $v\in P_{i}$,
we let 
\[
U_{i,v}^{(t)}\coloneqq\{u\in U_{i,v}^{(t-1)}\backslash\phi_{t}(P_{t}):{\forall w\in N^{-}(v) \cap P_t,}\phi_{t}(w)\rightarrow u\}.
\]
Note that by the structure of the sets $V_{x_{1}}',\dotsc,V_{x_{r}}'$,
we only change $U_{i,v}^{(t)}$ as follows. First, in at most $\Delta$ steps, we embed an in-neighbor of $v$, and we decrease $|U_{i,v}^{(t)}|$ by at most $\frac{1}{4\Delta}|V_{x_{i}}'|$. Additionally, we remove at most $n$ additional vertices from $U_{i,v}^{(t)}$, corresponding to vertices that were picked as images of $\phi_t$. In total, we remove at most $\Delta \cdot \frac{1}{4\Delta}|V_{x_{i}}'| +n\leq \frac{1}{2}|V_{x_i}'|$ vertices.
We thus see that $|U_{i,v}^{(t)}|\geq \frac 12|V_{x_{i}}'|$ for all
$t$. 

If we are able to run this process for all $1\leq t\leq r$, then
we have embedded a copy of $H$ in $T$, so we may assume that the
process fails at some step $t$. This means that there is no embedding
$\phi_{t}:H[P_{t}]\hookrightarrow T$ such that every vertex
$v\in P_{t}$ is mapped to $U_{t,v}^{(t-1)}$. Therefore, by applying Lemma
\ref{lem:one-biclique} to $H[P_t]$, we conclude that $T$ contains a $1$-dense pair $(U_{1},U_{2})$
of size at least 
\[
\frac{|U_{t,v}^{(t-1)}|}{4m\Delta}\geq\frac{|V_{x_{t}}|}{16m\Delta}\geq \frac 1m (2^{10}A\Delta^{2})^{-d\depth(\rho)}\comp(\rho)^{-d}N.\qedhere
\]
\end{proof}
Our next result shows how we can iterate the previous lemma to construct
many $1$-dense pairs. The iteration is nearly identical to the one
in Lemma \ref{lem:build-skeleton}, where we iterated the construction
of one dense pair to the construction of a $(\rho,1-c)$-skeleton.
This lemma takes as input two (not necessarily distinct) prefix labelings
on $H$, one of which is a forest labeling. The reason to have two
separate labelings is that it may be useful to use the failure of
embedding of $H$ according to one labeling to construct a good embedding
structure for the other labeling.
\begin{lem}
\label{lem:biclique-skeleton}Let $d \geq 2$, and suppose that $H$ is a $d$-degenerate
acyclic digraph on $n$ vertices with maximum degree $\Delta$. Let
$\rho_{1}:V(H)\to C_{1}$ and $\rho_{2}:V(H)\to C_{2}$ be two prefix
labelings, such that $\rho_{1}$ is a forest prefix labeling of maximum
degree $A_{1}$ and maximum component size $m_1$. Let
\[
\gamma = \left(m_1(2^{10}A_{1}\Delta^{2})^{d\depth(\rho_{1})}\comp(\rho_{1})^d\right) ^{-1}.
\]
If $T$ is an $H$-free tournament on $N\geq\gamma^{-\depth(\rho_{2})}n$
vertices, then $T$ contains a $(\rho_{2},1)$-skeleton of size at
least $\gamma^{\depth(\rho_2)}N$.
\end{lem}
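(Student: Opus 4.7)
The plan is to mirror the proof of Lemma \ref{lem:build-skeleton}, replacing the use of Lemma \ref{lem:dense-pair} with an application of Lemma \ref{lem:one-biclique-again} at each step; this will produce $1$-dense splits rather than merely $(1-c)$-dense ones. Specifically, I will construct, by recursion on the binary tree of prefixes of codewords of $C_{2}$, a vertex set $V_{x}\subseteq V(T)$ for every binary string $x$ that is a prefix of some codeword in $C_{2}$, initialized by $V_{\varnothing}=V(T)$. At each internal node $x$ (i.e.\ a proper prefix of some codeword), I will apply Lemma \ref{lem:one-biclique-again} to the induced subtournament $T[V_{x}]$, which remains $H$-free, using the forest prefix labeling $\rho_{1}$, to obtain a $1$-dense pair $(W_{0},W_{1})$ of size at least $\gamma|V_{x}|$; then I set $V_{x0}\coloneqq W_{0}$ and $V_{x1}\coloneqq W_{1}$, and continue the recursion.

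The key simplification relative to Lemma \ref{lem:build-skeleton} is that $1$-density propagates downward for free, so the delicate secondary filtering step of that lemma is unnecessary. Indeed, if $(W_{0},W_{1})$ is $1$-dense, then every edge between $W_{0}$ and $W_{1}$ points from $W_{0}$ to $W_{1}$, and the same holds for any subsets $V_{y}\subseteq W_{0}$ and $V_{z}\subseteq W_{1}$. Consequently, for any two codewords $y\prec z$ of $C_{2}$ that are adjacent under $\rho_{2}$, if $p$ denotes their longest common prefix, then $y$ must be prefixed by $p0$ and $z$ by $p1$, so $V_{y}\subseteq V_{p0}=W_{0}$ and $V_{z}\subseteq V_{p1}=W_{1}$, whence $(V_{y},V_{z})$ is $1$-dense. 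This verifies that $\{V_{x}\}_{x\in C_{2}}$ is a $(\rho_{2},1)$-skeleton. One upshot is that, unlike in Lemma \ref{lem:build-skeleton}, I do not even need to carry out the recursion in depth-first order; a level-by-level construction suffices.

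For the size analysis, induction on $|x|$ shows that $|V_{x}|\geq\gamma^{|x|}N$, so the skeleton has size at least $\gamma^{\depth(\rho_{2})}N$. To legally invoke Lemma \ref{lem:one-biclique-again} at an internal node $x$, we need $|V_{x}|\geq(2^{10}A_{1}\Delta^{2})^{d\depth(\rho_{1})}\comp(\rho_{1})^{d}n=(\gamma m_{1})^{-1}n$, and since $m_{1}\geq1$ it suffices to have $|V_{x}|\geq\gamma^{-1}n$. Since internal nodes have depth at most $\depth(\rho_{2})-1$, the hypothesis $N\geq\gamma^{-\depth(\rho_{2})}n$ gives $|V_{x}|\geq\gamma^{\depth(\rho_{2})-1}N\geq\gamma^{-1}n$, as needed. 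Aside from this routine bookkeeping, no substantive obstacle remains: the heavy lifting has already been done in Lemma \ref{lem:one-biclique-again}, and the iteration here is entirely mechanical.
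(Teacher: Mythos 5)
Your proof is correct and follows essentially the same approach as the paper: a top-down recursion through the binary tree of prefixes of $C_2$-codewords, invoking Lemma~\ref{lem:one-biclique-again} at each internal node to produce $V_{x0},V_{x1}\subseteq V_x$ as the two sides of a $1$-dense pair, with the same size bookkeeping via $|V_x|\ge\gamma^{|x|}N$. Your observation that the depth-first ordering and secondary filtering of Lemma~\ref{lem:build-skeleton} are unnecessary here, because $1$-density is automatically inherited by subsets, is exactly the simplification the paper's proof relies on implicitly.
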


\begin{proof}
As in the proof of Lemma \ref{lem:build-skeleton}, we construct
our skeleton by assigning a set $V_{x}\subseteq V(T)$ for
every $x\in\{0,1\}^{*}$ that is a prefix of some codeword in $C_{2}$, with the property that $V_{x0}, V_{x1}$ are both subsets of $V_x$.
We guarantee inductively that
\begin{equation}
|V_{x}|\geq \gamma^{|x|}N,\label{eq:biclique-skeleton-size-lb}
\end{equation}
where $|x|$ is the length of $x$. To begin the induction, we set
$V_{\varnothing}=V(T)$, which satisfies our size hypothesis since
$|\varnothing|=0$. Inductively, suppose we've defined $V_{x}$. If
$x\in C_{2}$, we stop. If not, we apply Lemma \ref{lem:one-biclique-again}
to the induced subtournament on $V_{x}$, which we may do since
\begin{align*}
|V_{x}|&\geq\gamma^{|x|}N\geq\gamma^{\depth(\rho_{2})-1}N\geq(2^{10}A_{1}\Delta^{2})^{d\depth(\rho_1)}\comp(\rho_1)^{d}n.
\end{align*}
This allows us to find a $1$-dense pair $(U_{1},U_{2})$ of size
at least $\gamma|V_{x}|$.
We then set $V_{x0}=U_{1}$ and $V_{x1}=U_{2}$, which we see
satisfy (\ref{eq:biclique-skeleton-size-lb}) inductively. We continue
in this way until we define $V_{x}$ for every $x\in C_{2}$. To conclude,
suppose that $y,z\in C_{2}$ are adjacent under $\rho_2$, and let $x$ be their
longest common prefix. Then $V_{y}\subseteq V_{x0}$ and $V_{z}\subseteq V_{x1}$,
and we know that every edge between $V_{x0}$ and $V_{x1}$
is oriented from $V_{x0}$ to $V_{x1}$, which implies that
$(V_{y},V_{z})$ is $1$-dense, as claimed. 
\end{proof}

\subsection{The inner stage}\label{subsec:inner-stage}
In this subsection, we will see how to use the various structures built in the previous subsection to successfully embed a copy of $H$ in any sufficiently large tournament $T$. The basic idea is that the $(\rho,\delta)$-skeletons constructed in Lemmas \ref{lem:build-skeleton} and \ref{lem:biclique-skeleton} are precisely the structures we need in order to apply Lemma \ref{lem:inner-stage} and find a copy of $H$. 

Recall that a prefix coloring is a prefix labeling where the preimage
of every codeword is an independent set. Our first result here shows a general
upper bound on $\ovar{r_{1}}(H)$ in terms of the depth and complexity of
a prefix coloring of $H$.
\begin{thm}\label{thm:general-rho}
Let $H$ be an acyclic digraph on $n$ vertices with maximum degree
$\Delta\geq1$. Then for any prefix coloring $\rho:V(H)\to C$, we have $\ovar{r_1}(H) \leq N$, where
\[
N = \left(2^{5\Delta+4}\Delta^{2\Delta+1}\right)^{\depth(\rho)}\comp(\rho)^{\Delta}n.
\]
\end{thm}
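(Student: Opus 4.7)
The plan is to combine the two main results of the section in the natural way: use Lemma~\ref{lem:build-skeleton} (the outer stage) to produce a skeleton with the right density parameter, then apply Lemma~\ref{lem:inner-stage} (the inner stage) to finish the embedding. The choice of parameters in Lemma~\ref{lem:build-skeleton} is dictated by the hypothesis of Lemma~\ref{lem:inner-stage}: we need a $(1 - 1/(8\Delta^{2}))$-dense pair between any two candidate sets corresponding to adjacent vertices, so we should apply Lemma~\ref{lem:build-skeleton} with $c = 1/(8\Delta^{2})$.

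First, I would assume toward a contradiction that $T$ is an $H$-free tournament on $N$ vertices. Since $H$ has maximum degree $\Delta$, it is $\Delta$-degenerate, so I would apply Lemma~\ref{lem:build-skeleton} with $d = \Delta$ and $c = 1/(8\Delta^{2})$. A direct computation gives
\[
\frac{4^{d+1}\Delta}{c^{d}} \;=\; 4^{\Delta+1}\Delta \cdot (8\Delta^{2})^{\Delta} \;=\; 2^{5\Delta+2}\,\Delta^{2\Delta+1},
\]
so that the stated value of $N$ equals $4^{\depth(\rho)} \cdot (2^{5\Delta+2}\Delta^{2\Delta+1})^{\depth(\rho)}\comp(\rho)^{\Delta} n$, comfortably exceeding the threshold needed to invoke the outer-stage lemma. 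Lemma~\ref{lem:build-skeleton} then produces a $(\rho, 1 - 1/(8\Delta^{2}))$-skeleton $\{V_x\}_{x \in C}$ of size at least
\[
\left(\frac{c^{d}}{4^{d+1}\Delta}\right)^{\depth(\rho)}\comp(\rho)^{-\Delta} N \;=\; 4^{\depth(\rho)} n \;\geq\; 4n,
\]
where the last inequality uses $\depth(\rho) \geq 1$. (If $\depth(\rho) = 0$, then $C = \{\varnothing\}$, forcing $H$ to be edgeless since $\rho$ is a prefix \emph{coloring}; in that case the theorem is trivial as $\ovar{r_{1}}(H) = n$.)

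Next, for each vertex $v_i \in V(H)$, I would set $V_i \coloneqq V_{\rho(v_i)}$, so every such set has size at least $4n$, as required by Lemma~\ref{lem:inner-stage}. For every edge $v_i \to v_j$ in $H$, the fact that $\rho$ is a prefix coloring (so that $\rho^{-1}(x)$ is independent for each $x$) ensures $\rho(v_i) \neq \rho(v_j)$, and combined with the definition of a prefix labeling this yields $\rho(v_i) \prec \rho(v_j)$. Moreover, these two codewords are adjacent under $\rho$ (the edge $v_i \to v_j$ witnesses this), so the skeleton property guarantees that $(V_i, V_j) = (V_{\rho(v_i)}, V_{\rho(v_j)})$ is $(1 - 1/(8\Delta^{2}))$-dense.

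Finally, Lemma~\ref{lem:inner-stage} applies to $H$ with the candidate sets $V_1, \dotsc, V_n$ and produces an embedding $H \hookrightarrow T$, contradicting $H$-freeness. I do not expect a significant obstacle here: the theorem is essentially an assembly of the two technical lemmas of the section, and the only work is the routine verification that the chosen parameters propagate correctly through the size bounds. The slight subtlety is noting that prefix \emph{coloring} (as opposed to an arbitrary prefix labeling) is precisely what is needed to guarantee $\rho(v_i) \neq \rho(v_j)$ along each edge, so that the skeleton density condition transfers to adjacent vertices in $H$.
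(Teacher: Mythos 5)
Your proof is correct and takes essentially the same approach as the paper: apply Lemma~\ref{lem:build-skeleton} with $d=\Delta$ and $c=1/(8\Delta^{2})$ to obtain a $(\rho, 1-\frac{1}{8\Delta^{2}})$-skeleton of size at least $4n$, set $V_i = V_{\rho(v_i)}$, and invoke Lemma~\ref{lem:inner-stage}. Your explicit handling of the $\depth(\rho)=0$ edge case is a minor addition the paper leaves implicit.
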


\begin{proof}
Let $T$ be a tournament on $N$
vertices, and suppose for contradiction that $T$ is $H$-free. By
Lemma \ref{lem:build-skeleton} applied with $d=\Delta$ and $c=\frac{1}{8\Delta^2}$, we can find
in $T$ a $(\rho,1-\frac{1}{8\Delta^{2}})$-skeleton $\{V_{x}\}_{x\in C}$
of size at least 
\[
\left(\frac{(8\Delta^{2})^{-\Delta}}{4^{\Delta+1}\Delta}\right)^{\depth(\rho)}\comp(\rho)^{-\Delta}N=\left(2^{5\Delta+2}\Delta^{2\Delta+1}\right)^{-\depth(\rho)}\comp(\rho)^{-\Delta}N\geq4n.
\]
Now, for any vertex $v_{i}\in V(H)$, let $V_{i}=V_{\rho(v_{i})}$.
Then since $\rho^{-1}(x)$ is an independent set for any $x\in C$,
we see that if $v_{i}\to v_{j}$ is an edge of $H$, then $(V_{i},V_{j})$
is a $(1-\frac{1}{8\Delta^{2}})$-dense pair. Therefore, by Lemma
\ref{lem:inner-stage}, we conclude that $T$ contains a copy of $H$. 
\end{proof}
The second result of this subsection uses the $(\rho,1)$-skeletons we constructed in case we are given a forest prefix labeling. Using this skeleton, we are able to prove the following result, which takes as input a forest prefix labeling and any arbitrary prefix labeling (which may be the same as the forest prefix labeling). The output is a bound on the Ramsey number, in terms of the depth and complexity of the first labeling, and in terms of the maximum Ramsey number of the parts in the partition induced by the second prefix labeling.
\begin{thm}
\label{thm:embed-two-labelings}Let $d \geq 2$, and let $H$ be a $d$-degenerate acyclic
digraph on $n$ vertices with maximum degree $\Delta$. Suppose that
$\rho_{1}:V(H)\to C_{1}$ is a forest prefix labeling of $H$ and that $\rho_{2}:V(H)\to C_{2}$
is any prefix labeling. Let $A_1$ and $m_1$ be the maximum degree and maximum component size of $\rho_1$, respectively. Then $\ovar{r_1}(H) \leq N$, where
\[
N=\left(m_1(2^{10}A_{1}\Delta^{2})^{d\depth(\rho_{1})}\comp(\rho_{1})^d\right)^{\depth(\rho_{2})}\max\left(n,\max_{x\in C_{2}}\ovar{r_{1}}(H[\rho_2^{-1}(x)])\right).
\]
 
\end{thm}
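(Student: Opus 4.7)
The plan is to argue by contradiction: assume $T$ is an $H$-free tournament on $N$ vertices and derive a contradiction by explicitly constructing an embedding of $H$ into $T$. The first step is to apply Lemma~\ref{lem:biclique-skeleton} with the two given labelings $\rho_1$ and $\rho_2$. Since the bound on $N$ in particular forces $N \ge \gamma^{-\depth(\rho_2)} n$ (where $\gamma$ is as defined in that lemma), Lemma~\ref{lem:biclique-skeleton} produces a $(\rho_2, 1)$-skeleton $\{V_x\}_{x \in C_2}$ in $T$ with each $V_x$ of size at least
\[
\gamma^{\depth(\rho_2)} N \;=\; \max\!\Big(n,\; \max_{x \in C_2} \ovar{r_1}(H[\rho_2^{-1}(x)])\Big).
\]

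The second step, which is the key technical observation, is that the sets $\{V_x\}_{x \in C_2}$ produced by Lemma~\ref{lem:biclique-skeleton} are in fact pairwise disjoint. This does not follow from the abstract definition of a skeleton (which explicitly allows overlaps), but it holds for this particular construction: the proof of Lemma~\ref{lem:biclique-skeleton} proceeds by depth-first search along a binary tree, and at each internal node $x$ it defines $V_{x0}, V_{x1}$ to be a $1$-dense pair inside $V_x$. A $1$-dense pair must be disjoint, since a common vertex $v \in V_{x0}\cap V_{x1}$ would mean the ordered pair $(v,v)$ is not an edge, preventing all $|V_{x0}||V_{x1}|$ ordered pairs from being oriented forward; moreover, tracing into Lemma~\ref{lem:one-biclique-basic} shows that disjointness is in fact produced explicitly by that base lemma. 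By induction on the tree depth, any two codeword sets $V_x, V_y$ with $x \ne y$ lie in the two disjoint halves of the split at their longest common prefix, and are therefore disjoint.

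The final step is to assemble the embedding piece by piece. For each $x \in C_2$, since $|V_x| \ge \ovar{r_1}(H[\rho_2^{-1}(x)])$, the induced tournament $T[V_x]$ contains a copy of $H[\rho_2^{-1}(x)]$ by the definition of the oriented Ramsey number; fix such an embedding $\phi_x$. I would then define $\phi \colon V(H) \to V(T)$ by setting $\phi|_{\rho_2^{-1}(x)} = \phi_x$ for each $x \in C_2$. This map is injective because the $V_x$ are pairwise disjoint. To verify it is a digraph embedding, consider an edge $v_i \to v_j$ of $H$: if $\rho_2(v_i) = \rho_2(v_j)$, the edge is preserved by the corresponding $\phi_x$; otherwise $\rho_2(v_i) \prec \rho_2(v_j)$ (since $\rho_2$ is a prefix labeling), and the existence of this edge makes the two codewords adjacent under $\rho_2$, so the $1$-density of $(V_{\rho_2(v_i)}, V_{\rho_2(v_j)})$ forces every edge between these sets (in particular the edge between $\phi(v_i)$ and $\phi(v_j)$) to be oriented correctly. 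Thus $\phi$ embeds $H$ into $T$, contradicting $H$-freeness.

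The main subtlety will be the disjointness claim in the second step, since it is not part of the definition of a $(\rho,\delta)$-skeleton and must be extracted from the internal structure of the construction. If the skeleton sets could truly overlap, then even with $|V_x|$ large enough to host $H[\rho_2^{-1}(x)]$, the separate embeddings $\phi_x$ need not combine into an injective map; one would then have to embed the parts sequentially while avoiding previously used vertices, which would require a larger size bound than $\max(n,\max_x \ovar{r_1}(\cdots))$ and so would not prove the stated inequality.
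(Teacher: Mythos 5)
Your proposal is correct and takes essentially the same approach as the paper: apply Lemma~\ref{lem:biclique-skeleton} to obtain a $(\rho_2,1)$-skeleton with each $V_x$ large enough to host $H[\rho_2^{-1}(x)]$, and then combine the individual copies using the disjointness and $1$-density of the skeleton parts. The paper's proof simply asserts disjointness of the $\{V_x\}$ in passing, whereas you correctly identify this as the key subtlety not covered by the abstract skeleton definition and justify it by tracing the depth-first construction of Lemma~\ref{lem:biclique-skeleton} (noting that a $1$-dense pair in a tournament is forced to be disjoint); this is a valid and welcome elaboration rather than a different route.
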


\begin{proof}
Let $T$ be a tournament on $N$ vertices, and suppose that $T$ is $H$-free. We apply Lemma \ref{lem:biclique-skeleton}
to $T$, which allows us to find a $(\rho_{2},1)$-skeleton $\{V_{x}\}_{x\in C_{2}}$
where
\[
|V_{x}|\geq\max_{y\in C_{2}}\ovar{r_{1}}(H[\rho_2^{-1}(y)])\geq \ovar{r_{1}}(H[\rho_2^{-1}(x)])
\]
for all $x\in C_{2}$. In other words, this is a collection of disjoint sets $\{V_x\}_{x \in C_2}$ such that if $x$ precedes $y$ in the lexicographic ordering, then every edge is oriented from $V_x$ to $V_y$. By the definition of the Ramsey number, we see that the induced subtournament
$T[V_{x}]$ must contain a copy of $H[\rho_2^{-1}(x)]$ for all $x \in C_2$. We pick such
a copy arbitrarily for each $x\in C_{2}$, and observe that their union
forms a copy of $H$ in $T$. 
\end{proof}
\section{Upper bounds on oriented Ramsey numbers}\label{sec:upper-bounds}

\subsection{Upper bounds in terms of height}

Theorem \ref{thm:general-rho}, which bounds $\ovar{r_{1}}(H)$ in terms of the dyadic complexity and
depth of a prefix coloring, allows us to prove bounds on $\ovar{r_{1}}(H)$ in terms of other,
more natural, parameters. For instance, the next lemma relates the
height of an acyclic digraph to its dyadic complexity and depth.
\begin{lem}\label{lem:height-implies-rho}
If $H$ is an acyclic digraph of height $h$, then there exists a
prefix coloring $\rho:V(H)\to C$ with $\depth(\rho)\leq\lceil\log h\rceil$
and $\comp(\rho)\leq h^{\lceil\log h\rceil}$. 
\end{lem}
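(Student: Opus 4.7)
My plan is to take the height decomposition of $H$ and encode the part indices as fixed-length binary strings; the resulting prefix code is automatically balanced enough to give the required bounds on depth and complexity.

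Concretely, since $H$ has height $h$, partition $V(H) = S_1 \sqcup \cdots \sqcup S_h$ into independent sets so that every edge of $H$ goes from some $S_i$ to some $S_j$ with $i < j$. Let $L = \lceil \log h \rceil$, and for each $i$ with $S_i \ne \varnothing$, let $\sigma(i) \in \{0,1\}^L$ be the binary representation of $i-1$, padded with leading zeros to length $L$. Set $C = \{\sigma(i) : S_i \ne \varnothing\}$ and define $\rho : V(H) \to C$ by $\rho(v) = \sigma(i)$ whenever $v \in S_i$. Since every codeword in $C$ has the same length $L$, $C$ is automatically a prefix code and $\rho$ is surjective. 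Each fiber $\rho^{-1}(\sigma(i)) = S_i$ is an independent set, so $\rho$ is in fact a prefix coloring. Moreover, lexicographic order on fixed-length binary strings agrees with numerical order on $\{0,1,\dotsc,h-1\}$, so if $v \to w$ is an edge of $H$ with $v \in S_i$, $w \in S_j$, then independence forces $i < j$ and hence $\sigma(i) \prec \sigma(j)$, as required.

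It is then immediate that $\depth(\rho) = L = \lceil \log h \rceil$. For the complexity bound, fix any $y \in C$; its prefixes are strings of lengths $0, 1, \dotsc, L$, so there are at most $L$ proper prefixes, and $y$ itself contributes $a(y) = 1$ since no element of $C$ begins with $y0$ or $y1$. For each proper prefix $x$ of $y$, the trivial inequality $a(x) \le \min(a_0(x), a_1(x)) \le |C| \le h$ holds. Multiplying these bounds yields
\[
\comp(\rho) = \max_{y \in C} \prod_{x \text{ prefix of } y} a(x) \le h^L = h^{\lceil \log h \rceil},
\]
as claimed.

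There is no real obstacle here; the only choice that matters is to use a balanced prefix code (all codewords of length exactly $\lceil \log h \rceil$) ordered consistently with the height decomposition, and the bound on $\comp(\rho)$ follows from the crude estimate $a(x) \le h$ together with the depth bound on the number of contributing factors.
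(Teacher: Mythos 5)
Your construction is identical to the paper's (encode the height layers by fixed-length binary strings), and your proof is correct. The only difference is in bounding the dyadic complexity: the paper uses the length-dependent bound $a(x)\le 2^{\lceil\log h\rceil-\ell}$ for a prefix $x$ of length $\ell$ and multiplies the geometric series, whereas you use the cruder uniform bound $a(x)\le h$ for each of the $\lceil\log h\rceil$ proper prefixes. Your version is in fact cleaner and sidesteps a small infelicity in the paper's exponent bookkeeping, while producing the same final bound $h^{\lceil\log h\rceil}$. One tiny imprecision worth flagging: the chain $a(x)\le\min(a_0(x),a_1(x))\le|C|\le h$ is not literally valid when $\min(a_0(x),a_1(x))=0$, since in that case $a(x)$ is defined to be $1$; but since $1\le h$ the conclusion $a(x)\le h$ holds in all cases, so the argument goes through.
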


\begin{proof}
Recall that if $H$ has height $h$, then $H$ has a directed partition
$P_{0},\ldots,P_{h-1}$ into $h$ independent sets. This partition
naturally yields a prefix coloring using the prefix code $C$ consisting
of all binary strings of length $\lceil\log h\rceil$. Namely, we can label each vertex in $P_{i}$
by the base-$2$ representation of $i$, and this yields a prefix
coloring with depth $\lceil\log h\rceil$. To estimate the dyadic
complexity of this prefix coloring, note that for any binary string
$x$ of length $\ell\leq\lceil\log h\rceil$, we have that $a(x)\leq2^{\lceil\log h\rceil-\ell}$,
since there are at most $2^{\lceil\log h\rceil-\ell}$ codewords in
$C$ prefixed by $x$. Therefore,
\[
\comp(\rho)\leq\prod_{\ell=0}^{\lceil\log h\rceil}2^{\lceil\log h\rceil-\ell}=2^{\lceil\log h\rceil^{2}-\sum_{\ell=0}^{\lceil\log h\rceil}\ell}\leq h^{\lceil\log h\rceil}.\qedhere
\]
\end{proof}
Then Theorem \ref{thm:height-ub} follows as a simple corollary.
\begin{proof}[Proof of Theorem \ref{thm:height-ub}] The result is immediate if $\Delta=1$, so assume $\Delta\geq 2$.
Let $\rho$ be the prefix coloring from Lemma \ref{lem:height-implies-rho}, which has $\depth(\rho)\leq\lceil\log h\rceil$
and $\comp(\rho)\leq h^{\lceil\log h\rceil}$. By Theorem \ref{thm:general-rho},
\[
    \ovar{r_1}(H)\leq \left(2^{5\Delta+4} \Delta^{2\Delta+1}\right)^{\depth(\rho)}\comp(\rho)^\Delta n \leq 2^{7\Delta \log h} \Delta^{3\Delta \log h} h^{2\Delta \log h}n\leq (\Delta h)^{10\Delta \log h}n.\qedhere 
\]
\end{proof}

Similarly, given a graded digraph, one can find a prefix coloring with small depth and dyadic complexity.
\begin{lem}\label{lem:graded-implies-rho}
If $H$ is a graded acyclic digraph of height $h$, then there exists
a prefix coloring $\rho:V(H)\to C$ with $\depth(\rho)\leq\lceil\log h\rceil$
and $\comp(\rho)=1$.
\end{lem}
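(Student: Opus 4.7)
The plan is to use essentially the same prefix coloring as in the proof of Lemma~\ref{lem:height-implies-rho}, but to exploit the grading hypothesis to get a much better bound on the dyadic complexity.

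First I would invoke the grading assumption to fix a directed partition $S_1,\dotsc,S_h$ of $V(H)$ into independent sets such that every edge of $H$ goes from some $S_i$ to $S_{i+1}$. Then I would let $C$ be the prefix code consisting of all binary strings of length $\lceil\log h\rceil$, and define $\rho:V(H)\to C$ by labeling every vertex in $S_{i+1}$ with the $\lceil\log h\rceil$-bit binary representation of $i$ (for $0\le i\le h-1$, extending to $C$ with arbitrary placeholder parts being empty if $h$ is not a power of $2$). This is clearly a prefix coloring of depth $\lceil\log h\rceil$, since each $\rho^{-1}(x)$ is either empty or equals some $S_{i+1}$, hence is independent.

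The key step is to show $a(x)=1$ for every $x$ that is a proper prefix of some codeword. Fix such an $x$ of length $\ell$, and let $k=\lceil\log h\rceil-\ell-1$. Under our labeling, the codewords in $C_0(x)$ correspond to a block of consecutive indices $[i_0,i_0+2^k)$, and the codewords in $C_1(x)$ correspond to the immediately following block $[i_0+2^k,i_0+2^{k+1})$. Because $H$ is graded, an edge between $\rho^{-1}(C_0(x))$ and $\rho^{-1}(C_1(x))$ can exist only between $S_{j+1}$ and $S_{j+2}$ for indices $j,j+1$ lying on opposite sides of the boundary, i.e.\ only at $j=i_0+2^k-1$. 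Thus at most one codeword in $C_0(x)$ (the one for index $i_0+2^k-1$) is adjacent under $\rho$ to anything in $C_1(x)$, and similarly at most one codeword in $C_1(x)$ is adjacent to anything in $C_0(x)$. Hence $a_0(x),a_1(x)\in\{0,1\}$, and in every case the definition gives $a(x)=1$.

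Since $a(x)=1$ for every proper prefix $x$ of any codeword $y\in C$, we obtain
\[
\comp(\rho)=\max_{y\in C}\prod_{x\text{ a prefix of }y}a(x)=1,
\]
which together with the bound $\depth(\rho)=\lceil\log h\rceil$ completes the proof. No step seems genuinely hard; the only thing one must be careful about is bookkeeping the bijection between codewords and the grading indices so that $C_0(x)$ and $C_1(x)$ really correspond to two consecutive blocks of $S_i$'s, which is what makes the graded assumption kick in.
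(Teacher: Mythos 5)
Your proof is correct and follows essentially the same approach as the paper: you reuse the binary-representation prefix coloring from the height lemma and observe that gradedness forces $a_0(x),a_1(x)\le 1$, since edges only connect consecutive layers and only the boundary codewords $(x,0,1,\dotsc,1)$ and $(x,1,0,\dotsc,0)$ straddle the split. The paper's proof is just a one-line version of the same observation.
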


\begin{proof}
The proof is identical to that of Lemma \ref{lem:height-implies-rho}, except that
$a(x)\leq1$ for every binary string $x$, since the only codeword
prefixed by $x0$ that can have edges to a codeword prefixed by
$x1$ is the codeword $(x,0,1,1,\dotsc,1)$. 
\end{proof}
As before, Theorem \ref{thm:graded} follows as a corollary.
\begin{proof}[Proof of Theorem \ref{thm:graded}] We may again assume that $\Delta\geq 2$.
Let $\rho$ be the prefix coloring from Lemma \ref{lem:graded-implies-rho}, with $\depth(\rho)\leq\lceil\log h\rceil$
and $\comp(\rho)=1$. Then from Theorem \ref{thm:general-rho},
\[
    \ovar{r_1}(H)\leq \left(2^{5\Delta+4} \Delta^{2\Delta+1}\right)^{\depth(\rho)}\comp(\rho)^\Delta n \leq 2^{7\Delta \log h} \Delta^{3\Delta \log h}  n\leq h^{10\Delta \log \Delta} n.\qedhere 
\]
\end{proof}
Recall from the introduction that the bandwidth of an $n$-vertex acyclic digraph $H$ is the least $\ell$ so that $P_n^\ell$ contains $H$. Using the same argument, one can obtain a bound of $\ovar{r_1}(H)\leq n^{O_\ell(1)}$ for any $n$-vertex acyclic digraph of bandwidth at most $\ell$, using the fact that the same binary-representation prefix coloring have dyadic complexity at most $n^{O(\log \ell)}$. However, since the Ramsey number of bounded-bandwidth acyclic digraphs is known to be linear \cite{Draganicetal}, we omit the proof of this weaker result.

\subsection{Upper bounds for random digraphs}

In this section, we prove Theorem \ref{thm:G(n,d)-almost-linear},
showing that if $d\ge1$ is bounded, then w.h.p.\ 
$\ovar{r_{1}}(H)$ is nearly linear when $H=\overrightarrow{G}(n,d)$. Additionally, we prove a nearly-linear upper bound when $p=d/n$ and $H=\overrightarrow{G}(n,p)$. 

We will need the following result of Dross and Havet \cite{DrHa} mentioned in the introduction.
They only state this result for orientations of trees, but one
can easily extend it to orientations of forests by adding edges to
join distinct connected components.
\begin{thm}
\label{thm:dross-havet}Let $H$ be a $1$-degenerate digraph (i.e.\
an orientation of an undirected forest) on $n\ge 2$ vertices. If $T$
is any tournament on at least $\frac{21}{8}n-\frac{47}{16}$ vertices, then $T$ contains a copy of
$H$.
\end{thm}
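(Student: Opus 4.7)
The approach I would take is a simple reduction from the forest case to the tree case, using the stated result of Dross and Havet for oriented trees as a black box. The key observation is that the bound $\frac{21}{8}n - \frac{47}{16}$ depends only on the number of vertices, not the number of edges, so enlarging the forest into a tree on the same vertex set does not worsen the hypothesis on $|V(T)|$.

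Concretely, let $H$ be a $1$-degenerate digraph on $n \ge 2$ vertices, so its underlying undirected graph is a forest with some number $k \ge 1$ of connected components $F_1,\dotsc,F_k$. First, I would construct an oriented tree $H'$ on the same vertex set $V(H)$ containing $H$ as a subdigraph. To do this, pick any representative vertex $u_i \in V(F_i)$ for each $i$, and add the $k-1$ new edges $u_i \to u_{i+1}$ for $1 \le i \le k-1$. Since we are adding one directed edge per pair (not two anti-parallel edges), the result is an oriented graph; since the underlying undirected graph is still acyclic (we are merely joining disjoint trees by single edges), it is an oriented tree on $n$ vertices. Note $H \subseteq H'$ as digraphs.

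Next, I would invoke the Dross--Havet theorem for oriented trees, as stated in the introduction: since $H'$ is an oriented tree on $n \ge 2$ vertices, every tournament $T$ with $|V(T)| \ge \frac{21}{8}n - \frac{47}{16}$ contains a copy of $H'$. Any embedding $\phi : H' \hookrightarrow T$ restricts to an embedding $H \hookrightarrow T$ (edge orientations and injectivity are preserved under restriction), which completes the argument.

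The main obstacle, if any, is purely bookkeeping: one must verify that the added edges really do produce an oriented tree on $n$ vertices to which the Dross--Havet bound applies without loss. This is essentially tautological here, so I do not expect any serious difficulty; the whole extension rests on the fact that the Dross--Havet bound is phrased in terms of vertex count alone and that every $1$-degenerate digraph embeds into an oriented tree on the same vertex set.
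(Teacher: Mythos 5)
Your proof is correct and matches the paper's approach: the paper explicitly notes that the Dross--Havet result ``only state[s] this result for orientations of trees, but one can easily extend it to orientations of forests by adding edges to join distinct connected components,'' which is precisely your reduction. Your write-up just spells out the bookkeeping that the paper leaves implicit.
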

We recall that the \emph{vertex
arboricity} of an undirected graph is the minimum number of subsets
needed to cover the vertices of the graph, such that each subset induces
a forest (see e.g.\ \cite{Burr} for more on this undirected graph parameter). We first define a natural directed analogue of this quantity, though for technical reasons we also keep track of the maximum component size in such a partition.
\begin{defn}
The \emph{directed vertex arboricity} $\va(H)$ of an acyclic digraph
$H$ is the minimum size $r$ of a directed partition of $H$ into
sets $\{P_{i}\}_{i=1}^{r}$ where $H[P_{i}]$ is $1$-degenerate for
all $i$. 

Additionally, if $H$ has a directed partition into sets $\{P_i\}_{i=1}^r$ such that $H[P_i]$ is $1$-degenerate for all $i$, and every weakly connected component in $H[P_i]$ has at most $m$ vertices, then we say that $H$ has an \emph{$(r,m)$-forest partition}.
\end{defn}

Both of our upper bound results for random digraphs follow from the following
theorem, which yields an upper bound on $\ovar{r_{1}}(H)$ in terms of the
degeneracy, maximum degree, and forest partition size of $H$. 
\begin{thm}
\label{thm:general-arboricity}Let $H$ be an acyclic digraph on $n$
vertices with maximum degree $\Delta$, degeneracy $d\geq 2$, and with an $(r,m)$-forest partition. Then $\ovar{r_{1}}(H)\leq(r\Delta)^{6d(\log r)^{2}} m^{2 \log r} n$.
\end{thm}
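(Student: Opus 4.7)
The plan is to apply Theorem~\ref{thm:embed-two-labelings} with a single prefix labeling $\rho$ playing the roles of both $\rho_1$ and $\rho_2$, where $\rho$ is constructed directly from the given $(r,m)$-forest partition. The Ramsey numbers of the induced parts will then be controlled using the Dross--Havet theorem (Theorem~\ref{thm:dross-havet}), since each part is $1$-degenerate.

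Concretely, let $P_1,\dotsc,P_r$ denote the $(r,m)$-forest partition and let $C \subseteq \{0,1\}^{\lceil \log r\rceil}$ consist of the length-$\lceil \log r\rceil$ binary representations of $0,1,\dotsc,r-1$. Define $\rho:V(H)\to C$ by sending every $v\in P_i$ to the binary representation of $i-1$. Because $P_1,\dotsc,P_r$ is a directed partition, the lex order on $C$ is consistent with the orientations of edges between distinct parts, so $\rho$ is a valid prefix labeling. Since each $H[P_i]$ is $1$-degenerate with weakly connected components of size at most $m$, $\rho$ is a forest prefix labeling with maximum component size $m_1 = m$ and maximum degree $A_1 \leq r$. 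A calculation identical to the proof of Lemma~\ref{lem:height-implies-rho} then yields $\depth(\rho) \leq \lceil \log r\rceil$ and $\comp(\rho) \leq r^{\lceil \log r\rceil}$.

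Applying Theorem~\ref{thm:embed-two-labelings} with $\rho_1 = \rho_2 = \rho$ gives
\[
\ovar{r_1}(H) \leq \Bigl(m \cdot (2^{10} r \Delta^2)^{d \lceil \log r\rceil} \cdot r^{d\lceil \log r\rceil}\Bigr)^{\lceil \log r\rceil} \cdot \max\Bigl(n,\max_{i} \ovar{r_1}(H[P_i])\Bigr).
\]
Each $H[P_i]$ is a $1$-degenerate acyclic digraph on at most $n$ vertices, so by Theorem~\ref{thm:dross-havet}, $\ovar{r_1}(H[P_i]) \leq \tfrac{21}{8} n - \tfrac{47}{16} \leq 3n$. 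Combining the two factors $(2^{10}r\Delta^2)^{d\lceil\log r\rceil}$ and $r^{d\lceil\log r\rceil}$ into $(2^{10}r^2\Delta^2)^{d\lceil\log r\rceil}$, and then bounding $\lceil \log r\rceil \leq 2\log r$ for $r \geq 2$ and absorbing the absolute constant $2^{10}$ into a power of $r\Delta$, gives the target bound $(r\Delta)^{6d(\log r)^2} m^{2\log r} n$.

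The main obstacle is simply the bookkeeping required to match the explicit constants in the claimed exponents $6d(\log r)^2$ and $2\log r$; all of the structural work has already been packaged into Theorems~\ref{thm:embed-two-labelings} and~\ref{thm:dross-havet}. Small edge cases (e.g.\ $r=1$, where $H$ is itself $1$-degenerate and Theorem~\ref{thm:dross-havet} directly gives $\ovar{r_1}(H)=O(n)$, or $r\Delta$ too small to cleanly absorb the constant $2^{10}$ into $(r\Delta)^{c}$) can be verified separately without any new ideas.
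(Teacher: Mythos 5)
Your proposal is correct and follows essentially the same route as the paper's own proof: both build the same forest prefix labeling from the $(r,m)$-forest partition, apply Theorem~\ref{thm:embed-two-labelings} with $\rho_1=\rho_2=\rho$, and invoke Theorem~\ref{thm:dross-havet} to bound $\ovar{r_1}(H[\rho^{-1}(x)])\le 3n$. The only cosmetic difference is that you take $C$ to consist of just the $r$ codewords actually used (giving $A_1\le r$), whereas the paper takes $C$ to be all of $\{0,1\}^{\lceil \log r\rceil}$ and bounds $A_1\le 2r$; both lead to the stated bound after the same bookkeeping.
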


\begin{proof}
Let $s=\lceil\log r\rceil$, and let $C$ denote the prefix code consisting
of all strings of length $s$. Fix a partition $P_{0}\sqcup\dotsb\sqcup P_{r-1}$
of $V(H)$ into directed forests such that every edge between $P_{i}$
and $P_{j}$ is oriented from $P_{i}$ to $P_{j}$ for all $i<j$, and such that $H[P_i]$ has weakly connected components of size at most $m$.
Let $\rho:V(H)\to C$ be the forest prefix labeling mapping $P_{i}$
to the binary representation of $i$. Then the maximum degree
of $\rho$ is at most $|C|=2^s \leq 2r$, the maximum component size of $\rho$ is at most $m$, and $\depth(\rho)=s$. Moreover, we
can bound the dyadic complexity of $\rho$ as $\comp(\rho)\leq r^{s}$,
since every binary string is the prefix of at most $r$ codewords
in $C$. We now now apply Theorem \ref{thm:embed-two-labelings} with
$\rho_{1}=\rho_{2}=\rho$. We recall that since $\rho^{-1}(x)$ is
a forest for every $x$, we have that $\ovar{r_{1}}(H[\rho^{-1}(x)])\leq3n$
by Theorem \ref{thm:dross-havet}. Therefore, 
\begin{align*}
\ovar{r_{1}}(H)&\leq\left(m_1(2^{10}A_1\Delta^{2})^{d\depth(\rho_{1})}\comp(\rho_{1})^d\right)^{\depth(\rho_{2})}\max\left(n,\max_{x\in C_{2}}\ovar{r_{1}}(H[\rho_2^{-1}(x)])\right)\\
&\leq\left(m(2^{11}r\Delta^{2})^{ds}r^{ds}\right)^{s}\cdot(3n)\\
&\leq (r\Delta)^{6d(\log r)^{2}} m^{2 \log r} n. \qedhere
\end{align*}
\end{proof}
It remains to check that both $\overrightarrow{G}(n,d)$ and $\overrightarrow{G}(n,p)$
satisfy the conditions of Theorem \ref{thm:general-arboricity} for
appropriate $\Delta,d,m,$ and $r$. Maximum degree and degeneracy are both easy to control for these graphs, so the nontrivial
part is bounding their directed vertex arboricity, or more precisely the parameters of a forest partition.

Consider the random digraph $\overrightarrow{G}(n,p)$, where $np=d\ge1$. The idea for
bounding $\va(H)$ is to equitably divide $[n]$ into $5d$ intervals
$I_{i}$, and note that $H[I_{i}]\sim\overrightarrow{G}(n/5d,p)$
is in the subcritical regime of the Erd\H os--R\' enyi random graph process,
so w.h.p.\ is a union of trees and unicyclic components, each comprising $O(\log n)$ vertices. Each interval
$I_{i}$ can be further divided in two to break the unicyclic components,
which shows that w.h.p.\  we have a $(10d, O(\log n))$-forest partition. The analysis is broadly
similar for $H=\overrightarrow{G}(n,d)$ but somewhat more technical.

Recall that if $d\ge1$ and $nd$ is even, a uniformly random undirected
$d$-regular graph $G(n,d)$ can be generated using the ``pairings
model'' (also known as the ``configurations model'') of Bollob\' as
\cite{Bollobas}, see the survey of Wormald \cite{Wormald}. The pairings
model of $G(n,d)$ generates a random $d$-regular multi-graph (with
self-loops allowed) by taking a uniformly random perfect matching
(a ``pairing'') on $nd$ points divided into $n$ $d$-sets, and
then contracting each $d$-set into a single vertex. There are a total
of
\[
P(nd)=\frac{(nd)!}{(nd/2)!\cdot2^{nd/2}}
\]
such pairings, and any simple $d$-regular graph is equally likely
to be generated. If $d\ge1$ is fixed and $n\rightarrow\infty$, it
is known that the probability a pairing generates a simple graph is
asymptotic to $e^{(1-d^{2})/4}$. To generate an honest $G(n,d)$,
repeatedly sample from the above model (expected constant number of
samples) until a simple graph is found.
\begin{lem}
\label{lem:bounded-arboricity-G(n,d)}If $d\ge2$ is fixed, $nd$
is even, and $G$ is the induced subgraph of $G(n,d)$ on a fixed
set of $\frac{n}{5d}$ vertices, then w.h.p.\ every connected component
of $G$ has order at most $2\log n$ and contains at most one cycle.
\end{lem}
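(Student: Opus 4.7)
The plan is to work in the pairings (configurations) model of Bollob\'as described in the excerpt, in which a uniformly random perfect matching of $nd$ labeled points, divided into $n$ groups of size $d$, is contracted to obtain a $d$-regular multigraph. Since the probability that such a matching gives a simple graph converges to $e^{(1-d^{2})/4}>0$, any event with probability $o(1)$ in the pairings model also has probability $o(1)$ under $G(n,d)$. Thus it suffices to prove that each of the two failure events---some component of $G[S]$ has order $>2\log n$, or some component has $\geq 2$ independent cycles---occurs with probability $o(1)$ in the pairings model. By the exchangeability of vertex labels in that model, $S$ may be taken to be any fixed subset of size $|S|=n/(5d)$, and the key subcritical parameter will be $|S|d/n=1/5$.

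For the component-order bound, I would use a first-moment estimate on the number $X_k$ of connected $k$-vertex subgraphs of $G[S]$. Enumerating by choice of $k$-subset of $S$, spanning tree on that subset (at most $k^{k-2}$ choices, by Cayley), and, for each of the $k-1$ tree edges, one of $d^{2}$ choices of slots at its two endpoints, and using the fact that the probability that any prescribed partial matching of $k-1$ point-pairs appears in the uniform random perfect matching is at most $(nd-2k)^{-(k-1)}$, we obtain
\[
\mathbb{E}[X_{k}]\leq\binom{|S|}{k}k^{k-2}d^{2(k-1)}(nd-2k)^{-(k-1)}.
\]
Bounding $\binom{|S|}{k}\leq(e|S|/k)^{k}$ and simplifying gives a bound of the form $\tfrac{e|S|}{k^{2}}\bigl(e|S|d/n\bigr)^{k-1}=O\!\left(\tfrac{n}{dk^{2}}\right)(e/5)^{k-1}$, and since $e/5<1$ this decays geometrically. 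Summing over $k\geq 2\log n$ yields $\sum_{k\geq 2\log n}\mathbb{E}[X_{k}]=o(1)$, so w.h.p.\ no component has more than $2\log n$ vertices.

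For the ``at most one cycle per component'' bound, I would count $k$-vertex connected subgraphs with $k+1$ edges by choosing a spanning tree as above together with two additional edges (at most $\binom{\binom{k}{2}}{2}\leq k^{4}$ ways), incurring two further factors of $d^{2}/(nd-2k)=O(d/n)$ in the matching probability. The same manipulation yields a per-$k$ bound of $O((e/5)^{k}k^{2}d/n)$, and summing over $k\geq 3$ gives $O(d/n)=o(1)$, so w.h.p.\ no component has two independent cycles. Combined with the previous step, with probability $1-o(1)$ every component of $G[S]$ has order at most $2\log n$ and at most one cycle. The main obstacle is really just careful bookkeeping: we must be sure the arithmetic of the chosen ratio $|S|d/n=1/5$ yields a geometric base $e/5<1$ in the first-moment series, which is exactly the reason the problem is being set up in the subcritical regime.
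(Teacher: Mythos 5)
Your proposal is correct and follows essentially the same approach as the paper: work in the pairings model, condition on simplicity (which happens with probability bounded away from zero, so pairings-model $o(1)$ transfers to $G(n,d)$), and then run first-moment arguments, once to rule out components of order $>2\log n$ and once to rule out components with two independent cycles, with the subcritical ratio $|S|d/n=1/5$ guaranteeing geometric decay. The one place your argument deviates from the paper is in the enumeration used for the two-cycle part: you bound the number of connected $k$-vertex graphs with $k+1$ edges by $k^{k-2}\cdot\binom{\binom{k}{2}}{2}\leq k^{k+2}$ (spanning tree plus two chords), whereas the paper uses the structural fact that a \emph{minimal} graph with two independent cycles is a path with extra edges from its endpoints, giving the sharper count $k^2\cdot k!$. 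Your bound is weaker by roughly a factor $e^k/\sqrt{k}$, which is why your geometric base comes out as $e/5$ rather than the paper's $1/5$; since $e/5<1$ the sum still converges and the calculation closes, so this is a cosmetic difference. Your approach has the modest advantage of not requiring the structure theorem for minimal $2$-cyclic graphs, at the cost of the looser constant.
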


\begin{proof}
We first show that w.h.p.\ each component contains at most one cycle.

It is not difficult to show that any \emph{minimal} graph $H$
on $k$ vertices with at least two cycles is formed from a path of
length $k$ by adding edges from each of its ends, see e.g.\ \cite[Theorem 5.5]{JaLuRu}. Thus, there are at most $k^{2}\cdot k!$
labelled graphs on $k$ vertices with this property. For a fixed such
$H$, we bound the probability it appears among $k$ fixed vertices
$v_{1},\ldots,v_{k}$ in the pairings model for $G(n,d)$. The total
number of pairings giving such an $H$ (without giving rise to multi-edges)
can be bounded above by
\[
d^{2(k+1)}\cdot P(nd-2(k+1)),
\]
since there are at most $(d^{2})^{k+1}$ ways to choose the edges
between $d$-sets that correspond to the $k+1$ edges of $H$, and
then at most $P(nd-2(k+1))$ ways to pair the remaining points. Let $\mathcal E$ be the event that the pairings model generating a simple graph, which occurs with probability $(1+o(1))e^{(1-d^2)/4}$. We get that
\begin{align*}
\Pr[\{v_{1},\ldots,v_{k}\}\text{ is a copy of }H \mid \mathcal E]&\le(1+o(1))e^{-(1-d^{2})/4}\cdot\frac{d^{2(k+1)}P(nd-2(k+1))}{P(nd)}\\
&\le(1+o(1))e^{-(1-d^{2})/4}\cdot\frac{d^{k+1}}{n^{k+1}}.
\end{align*}
Thus,
\[
\Pr[\text{some such }H\text{ appears in }G]\le(1+o(1))e^{-(1-d^{2})/4}\sum_{k\ge4}\binom{n/(5d)}{k}k^{2}\cdot k!\cdot\frac{d^{k+1}}{n^{k+1}}=o(1),
\]
which completes the proof that every component of $G$ contains at most one cycle w.h.p.

Next, we show that w.h.p.\ every connected component has order at most $2\log n$ with a similar computation. For a given set size $k$, the number of labelled trees $H$ on $k$ vertices is $k^{k-2}$ by Cayley's theorem. We bound the probability that a fixed such $H$ appears among $k$ fixed vertices $v_1,\ldots, v_k$. Similarly to before, we obtain that the total number of pairings giving such an $H$ without multi-edges is bounded above by
\[
d^{2(k-1)} \cdot P(nd-2(k-1)),
\]
since there are at most $(d^2)^{k-1}$ ways to choose the edges between $d$-sets that correspond to the $k-1$ edges of the tree $H$, and at most $P(nd-2(k-1))$ ways to pair the remaining points. Conditioning on the pairings model generating a simple graph, which occurs with probability $(1+o(1))e^{(1-d^2)/4}$, we get
\begin{align*}
\Pr[\{v_{1},\ldots,v_{k}\}\text{ is a copy of }H \mid \mathcal E]&\le(1+o(1))e^{-(1-d^{2})/4}\cdot\frac{d^{2(k-1)}P(nd-2(k-1))}{P(nd)}\\
&\le(1+o(1))e^{-(1-d^{2})/4}\cdot\frac{d^{k-1}}{n^{k-1}}.
\end{align*}
Taking a union bound over all choices of $v_1,\ldots, v_k$,
\begin{align*}
\Pr[\text{a tree on } k \text{ vertices appears in }G]&\le(1+o(1))e^{-(1-d^{2})/4}\binom{n/(5d)}{k} k^{k-2}\cdot \frac{d^{k-1}}{n^{k-1}} \\
& \le (1+o(1))e^{-(1-d^{2})/4}\frac{n^k\cdot k^{k-2} \cdot d^{k-1}}{(5d)^k \cdot (k/e)^k \cdot n^{k-1}} \\
& \le n \cdot (e/5+o(1))^k,
\end{align*}
which is $o(1)$ for $k =2\log n$. This implies that w.h.p.\ every component of $G$ has order at most $2\log n$, and completes the proof.
\end{proof}
We can now prove Theorem \ref{thm:G(n,d)-almost-linear}.
\begin{cor}
For any $d\geq2$, we have that w.h.p., $\ovar{r_{1}}(\overrightarrow{G}(n,d))\leq n(\log n)^{4 \log d}$ as $n \to \infty$.
\end{cor}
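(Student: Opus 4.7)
The plan is to apply Theorem \ref{thm:general-arboricity} to $H = \ovar{G}(n,d)$. Since $H$ is the forward orientation of a $d$-regular graph, both its maximum degree and its degeneracy equal $d$. If I can show that $H$ w.h.p.\ admits a $(10d, 2\log n)$-forest partition, substituting $r=10d$, $\Delta=d$, $m=2\log n$ into Theorem \ref{thm:general-arboricity} gives
\[
    \ovar{r_1}(H) \leq (10d^2)^{6d(\log 10d)^2}(2\log n)^{2\log 10d}\, n,
\]
and since $d$ is fixed, this is at most $n(\log n)^{4\log d}$ once $n$ is large, because the leading $d$-dependent factor is a constant that can be absorbed into a subpolynomial correction in $\log n$.

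\textbf{Constructing the partition.} Divide $[n]$ into $5d$ consecutive intervals $I_1,\dotsc,I_{5d}$ of length roughly $n/(5d)$. By Lemma \ref{lem:bounded-arboricity-G(n,d)} applied to each interval, together with a union bound over the $5d$ (constantly many) intervals, w.h.p.\ every $G(n,d)[I_i]$ is a pseudo-forest whose connected components each have at most $2\log n$ vertices. I split each $I_i$ into two subsets $I_i^{(1)}$ and $I_i^{(2)}$ as follows: for each unicyclic component $C$ of $G(n,d)[I_i]$, let $v_C$ denote the largest-indexed vertex lying on the unique cycle of $C$, place $\{u \in V(C):u \geq v_C\}$ into $I_i^{(2)}$, and place all other vertices of $C$ into $I_i^{(1)}$. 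Tree components of $G(n,d)[I_i]$ go entirely into $I_i^{(1)}$. I then order the $10d$ parts as $I_1^{(1)}, I_1^{(2)}, I_2^{(1)}, I_2^{(2)}, \dotsc, I_{5d}^{(1)}, I_{5d}^{(2)}$.

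\textbf{Verification.} The component-size bound of $2\log n$ is immediate since taking vertex subsets cannot merge components. For the forest property: within each unicyclic component $C$, all cycle vertices other than $v_C$ have smaller index than $v_C$ and hence lie in $I_i^{(1)} \cap C$, so removing $v_C$ from $C$ breaks the unique cycle and $I_i^{(1)} \cap C$ induces a forest; meanwhile $I_i^{(2)} \cap C$ contains $v_C$ as its only cycle vertex, together with some high-index tree vertices that form subforests of the trees originally hanging off cycle vertices, so no cycle can survive. For directedness: edges between distinct intervals point forward automatically from the interval ordering, and edges between the two parts within a single $I_i$ must lie inside a single component $C$; by construction, every vertex of $I_i^{(2)} \cap C$ has index at least $v_C$ while every vertex of $I_i^{(1)} \cap C$ has index strictly less than $v_C$, so all such edges are oriented from $I_i^{(1)}$ to $I_i^{(2)}$.

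\textbf{Main obstacle.} The main subtlety, and the step where care is most needed, is simultaneously breaking the cycles and preserving the directedness of the partition. A naive approach moving only $v_C$ (or any other single cycle vertex) into $I_i^{(2)}$ would fail whenever $C$ contains tree vertices with index greater than $v_C$: the corresponding edges between those tree vertices and $I_i^{(2)}$ would run backwards across the part boundary, violating the directed partition property. The resolution is to take $I_i^{(2)} \cap C$ as the entire downward-closed set $\{u \in V(C) : u \geq v_C\}$; this enlargement is safe because among cycle vertices only $v_C$ lands in $I_i^{(2)}$, so the forest structure on both sides survives, while the late part is now downward-closed in the vertex ordering within each component and hence every inter-part edge within $I_i$ is forced to point forward.
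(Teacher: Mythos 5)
Your proof is essentially identical to the paper's: both apply Lemma \ref{lem:bounded-arboricity-G(n,d)} together with a union bound over $5d$ consecutive intervals to get a directed partition into $5d$ pseudoforest parts with small components, split each part in two to obtain a $(10d, 2\log n)$-forest partition, and then conclude via Theorem \ref{thm:general-arboricity}. Your treatment of the cycle-breaking step is more careful than the paper's one-sentence ``dividing each part in two to split every cycle'' --- you correctly note that an arbitrary one-vertex removal can break the directed-partition property when high-index tree vertices hang off the cycle, and you fix this by moving the entire upward-closed tail $\{u \in V(C) : u \ge v_C\}$ into the later part, a detail the paper leaves implicit.
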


\begin{proof}
By Lemma \ref{lem:bounded-arboricity-G(n,d)}, we know that w.h.p.,
any fixed $n/(5d)$ vertices of $\overrightarrow{G}(n,d)$ span a
disjoint union of trees and unicyclic components, each of which has order at most $2 \log n$. Therefore, by applying
the union bound to $5d$ events, we conclude that w.h.p., $\overrightarrow{G}(n,d)$
has a directed partition into $5d$ parts all with this property.
Dividing each part in two to split every cycle, we conclude that w.h.p.\ $\ovar G(n,d)$ has a $(10d, 2\log n)$-forest partition. Additionally, since the underlying
undirected graph of $\overrightarrow{G}(n,d)$ is $d$-regular, we
see that $\overrightarrow{G}(n,d)$ is $d$-degenerate and has maximum
degree $d$. So by Theorem \ref{thm:general-arboricity}, we conclude
that w.h.p.\ as $n\to \infty$,
\[
\ovar{r_{1}}(\overrightarrow{G}(n,d))\leq (10d^2)^{6d(\log(10 d))^{2}}(2\log n)^{2\log(10d)}n,
\]
which is upper-bounded by $n (\log n)^{4\log d}$ for any fixed $d$ and sufficiently large $n$.
\end{proof}

Let $\ovar G(n,p)$ denote the orientation of the Erd\H os--R\'enyi random graph $G(n,p)$ on vertex set $[n]$ where all edges are oriented to the right. Similarly to the above, we can prove a nearly-linear upper bound on $\ovar{r_1}(\ovar G(n,p))$.
\begin{cor}\label{cor:G(n,p)-almost-linear}
For any $d\geq2$, we have that w.h.p., $\ovar{r_{1}}(\overrightarrow{G}(n,p))\leq n\cdot(\log n)^{O(d(\log d)^{2})}$,
where $p=d/n$.
\end{cor}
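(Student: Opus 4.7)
The plan is to mirror the argument for $\overrightarrow{G}(n,d)$: show that $\overrightarrow{G}(n,p)$ w.h.p.\ satisfies the hypotheses of Theorem~\ref{thm:general-arboricity} with parameters $r=O(d)$, max component size $m=O(\log n)$, maximum degree $\Delta=O(\log n)$, and degeneracy $O(d)$, and then simply plug these into the theorem.

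To produce the forest partition, I would equipartition the vertex set $[n]$ into $5d$ consecutive intervals $I_1,\dotsc,I_{5d}$, each of size $n/(5d)$. The induced undirected subgraph on each $I_i$ is distributed as $G(n/(5d), d/n)$, whose expected vertex degree is $(n/(5d)-1)(d/n)<1/5<1$. Hence each $I_i$ lies in the strictly subcritical regime of the Erd\H os--R\'enyi process, and standard first-moment estimates (of exactly the same flavor as in Lemma~\ref{lem:bounded-arboricity-G(n,d)}, but simpler, since edges are independent) show that w.h.p.\ every component of $G(n,p)[I_i]$ is either a tree or unicyclic, and has at most $O(\log n)$ vertices. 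Taking a union bound over the $5d$ intervals and then splitting each $I_i$ into two halves to cut all cycles, we obtain w.h.p.\ a $(10d, O(\log n))$-forest partition of $\overrightarrow{G}(n,p)$.

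The degree and degeneracy bounds are both routine. Each vertex of $G(n,p)$ has binomial degree with mean less than $d$, so a Chernoff (or union) bound over the $n$ vertices yields $\Delta(\overrightarrow{G}(n,p))\le C\log n$ w.h.p. For the degeneracy, the standard estimate $\Pr[\exists\,U\subseteq[n],|U|=k, e(G[U])\ge 10dk] \le \binom nk \binom{\binom k2}{10dk}p^{10dk}$ is $o(1)$ summed over $k\ge 1$, giving w.h.p.\ degeneracy $O(d)$.

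Finally, I would invoke Theorem~\ref{thm:general-arboricity} with these parameters. The resulting bound is
\[
\ovar{r_1}(\overrightarrow{G}(n,p)) \le (r\Delta)^{6d(\log r)^2}\, m^{2\log r}\, n \le \bigl(O(d\log n)\bigr)^{O(d(\log d)^2)}\bigl(O(\log n)\bigr)^{O(\log d)}\,n,
\]
and absorbing the $d$-dependent constants (which are independent of $n$) into the $O(\cdot)$ in the exponent of $\log n$ yields $n\cdot(\log n)^{O(d(\log d)^2)}$. No step is a serious obstacle once the subcritical structure theorem for $G(m,c/m)$ with $c<1$ is in hand; the only mildly delicate point is checking that the constant-in-$d$ overhead $d^{O(d(\log d)^2)}$ from Theorem~\ref{thm:general-arboricity} can be swept into the exponent, which is fine because $d$ is fixed while $\log n\to\infty$.
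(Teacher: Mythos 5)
Your proposal is correct and follows essentially the same route as the paper: establish a $(10d, O(\log n))$-forest partition via the subcritical structure of $G(n,p)$ restricted to $5d$ intervals, note the $O(d)$-degeneracy and polylogarithmic maximum degree, and feed these into Theorem~\ref{thm:general-arboricity}. The only difference is that you sketch the subcritical, degeneracy, and maximum-degree estimates directly, whereas the paper cites them (to \cite{JaLuRu} and \cite{FoSu}) and records the slightly sharper $\Delta = O(\log n / \log\log n)$, which makes no difference to the final bound.
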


\begin{proof}
It is easy to show that if $p=d/n$, then $G(n,p)$ is $O(d)$-degenerate (see \cite[Theorem 4.8]{FoSu} for a proof of a stronger result).
Additionally, it is well-known that the maximum degree of $G(n,p)$
is $O(\log n/\log \log n)$ for any fixed $d\geq2$. Finally, by the easier version
of Lemma \ref{lem:bounded-arboricity-G(n,d)} (e.g.\ \cite[Theorem 5.5]{JaLuRu}), we see that $\overrightarrow{G}(n,p)$ has a $(10d, 2\log n)$-forest partition.
Hence, Theorem \ref{thm:general-arboricity} implies that
$\ovar{r_{1}}(\overrightarrow{G}(n,p))\leq n\cdot(\log n)^{O(d(\log d)^{2})}$.
\end{proof}

\section{Multiple colors and ordered Ramsey numbers}\label{sec:multicolor&ordered}

In this section we study oriented Ramsey numbers of more than one
color, proving Theorems \ref{thm:multicolor-lower-bound} and \ref{thm:multicolor-upper-bound}. 

We define an \emph{ordered graph} $G$ to be an undirected graph whose
vertex set comes with a total order. If the vertex set is a subset
of $\mathbb{N}$, then the total order is assumed to be that of $\mathbb{N}$.
If $G_1,\ldots, 
G_k$ are ordered graphs, then the \emph{ordered Ramsey
number} $r_{<}(G_1,\ldots, G_k)$ is the minimum $N$ such that any edge-coloring
of the complete graph on $[N]$ in colors $1,\ldots, k$ contains a monochromatic copy of $G_i$ in color $i$ for some $i$. Here
an \emph{ordered copy} of $G_{i}$ is a subgraph isomorphic to $G_{i}$
with vertices appearing in the same order. We write $r_{<}(G;k) \coloneqq r_{<}(G,\ldots, G)$ when all the graphs $G_i$ are the same.

\subsection{The lower bound}

To prove Theorem \ref{thm:multicolor-lower-bound}, we need the following
theorem from Conlon,
Fox, Lee, and Sudakov \cite[Theorem 2.3]{CoFoLeSu}.
\begin{thm}
\label{thm:scrambled-matchings}If $M$ is a random matching on vertex set $[n]$, then w.h.p.,
\[
r_{<}(M;2)>n^{\log n/20\log\log n}.
\]
\end{thm}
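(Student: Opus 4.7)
The plan is to exhibit, for $N=n^{\log n/20\log\log n}$, an explicit $2$-edge-coloring $\chi$ of $K_N$ such that with high probability over the random matching $M$, neither color class contains an ordered copy of $M$. The coloring is a standard ``first-difference'' product construction: I would choose an integer $k=\lceil\log n/(20\log\log n)\rceil$ and an integer $s$ with $s^k\ge N$, identify $[N]$ with lexicographically-ordered sequences in $[s]^k$, and for each edge $\{u,v\}$ with $u<v$ let $d(u,v)\in[k]$ be the first coordinate at which the $[s]^k$-representations of $u$ and $v$ differ. After fixing a partition $[k]=A\sqcup B$ into roughly equal halves, define $\chi(u,v)=1$ if $d(u,v)\in A$ and $\chi(u,v)=2$ otherwise.

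The key structural observation is that monochromatic ordered subgraphs under $\chi$ are severely restricted by a recursive block decomposition. At each depth $d$, the vertex set partitions into consecutive intervals (``depth-$d$ blocks'') corresponding to fixed length-$(d-1)$ prefixes, and every depth-$d$ edge must run between two such blocks sharing the same prefix. Iterating this, any monochromatic ordered copy of a would-be matching is encoded by a small amount of ``pattern'' data: the depth assignment to each edge and the block-interval coordinates of its endpoints at every level. I would show that the total number of such patterns for a matching of size $n/2$ is at most $n^{O(n/\log n)}$, coming from $k$ choices of depth per edge and $s^{O(1)}$ choices of block per vertex per level.

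Finally I would complete the argument by a union bound over patterns, where for each fixed pattern the probability that a uniformly random perfect matching $M$ on $[n]$ realizes it factors as a product over edges of conditional probabilities of the form $O(1/(n-i))$; in aggregate this yields probability at most $n^{-\Omega(n)}$, comfortably dominating the pattern count. The main obstacle will be the delicate bookkeeping in Step~3: one must show that the constraints imposed by the block structure really do ``spread out'' across $\Omega(n)$ independent matching-edge placements, rather than collapsing into a small number of strong constraints that hold with moderate probability. Achieving this requires using the fact that $k=\Theta(\log n/\log\log n)$ is large enough that the average block cannot contain too many matched pairs, combined with a careful dyadic argument balancing the pattern count $n^{O(n/\log n)}$ against the per-pattern probability $n^{-\Omega(n)}$ so that the exponent $\log n/(20\log\log n)$ emerges in the final bound.
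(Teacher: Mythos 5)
Your approach has a fatal flaw at the very first step: the coloring you propose cannot work, for any matching. If the color of an edge depends only on the depth $d(u,v)$ of the first differing coordinate, then each color class contains enormous complete ordered multipartite structures that are universal for ordered graphs on $n$ vertices. Concretely, with $N=n^{\log n/20\log\log n}$ and $k=\lceil \log n/(20\log\log n)\rceil$ you are forced to take $s\approx N^{1/k}\approx n$, so each block at any depth splits into about $n$ sub-blocks of size still far exceeding $n$. If, say, $2\in B$, place the $n$ vertices of $M$ into $n$ distinct depth-$2$ sub-blocks of a single depth-$1$ block: every pair then has first difference at coordinate $2$, so \emph{every} edge gets color $2$, and you have a monochromatic ordered copy of $M$ (indeed of every ordered graph on $n$ vertices). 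The same happens for the other color using a depth in $A$, and no choice of the partition $A\sqcup B$ avoids this, since even when $s$ is slightly smaller than $n$ one can nest this trick across two levels of the same color class. Because the failure is deterministic and holds for every matching, no amount of pattern counting or union bounding over the random matching $M$ (your Steps 2--3) can rescue the argument; the event you are trying to show holds w.h.p.\ has probability $0$.

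The missing idea is randomness (or Ramsey-type hardness) \emph{within each level} of the product structure: the color of an edge with first difference at coordinate $i$ must depend on which two sub-blocks it joins, e.g.\ $\chi(u,v)=\chi_i(u_i,v_i)$ where $\chi_1,\dots,\chi_k$ are independent uniformly random $2$-colorings of $K_s$, so that no single color contains a large monochromatic clique or dense bipartite pattern between many sub-blocks. Note also that this paper does not prove the statement at all --- it is quoted from Conlon--Fox--Lee--Sudakov \cite{CoFoLeSu} --- and their proof is of exactly this shape: a lexicographic-type product of independent random colorings, combined with the property (which is what actually holds w.h.p.\ for a random matching, and is the analogue of the ``interval mesh'' property in Section \ref{subsec:Interval-meshes} of this paper) that most pairs of sufficiently long intervals of $[n]$ are joined by an edge of $M$. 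A monochromatic ordered copy of such an $M$ would force, at some scale, a forbidden monochromatic structure in one of the random level-colorings, and that is where the quasi-polynomial bound with exponent $\log n/(20\log\log n)$ comes from. Your union bound over the random matching against a fixed deterministic coloring inverts the quantifiers of the actual argument and cannot be repaired without reintroducing per-level randomness.
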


We remark that the existence of ordered matchings $M$ with $r_<(M;2) > n^{\log n/20\log \log n}$ was proven independently by Balko, Cibulka, Kr\'al, and Kyn\v{c}l~\cite{BaCiKrKy}. We will only need this weaker result.

If $H$ is an acyclic digraph with a Hamiltonian (directed) path,
we say $H$ is \emph{Hamiltonian}. It has a unique vertex ordering
$v_{1},\ldots,v_{n}$ where consecutive vertices are adjacent and all edges point forwards. We assign to
$H$ a natural ordered graph $H^{+}$ on $[n]$ where $i\sim j$ if and only if $v_{i}\rightarrow v_{j}$
in $H$.
\begin{lem}
\label{lem:hamiltonian-to-ordered}If $k\ge 1$ and $H$ is an acyclic Hamiltonian
digraph, then
\[
\ovar{r_{k}}(H)\ge r_{<}(H^{+}; k).
\]
\end{lem}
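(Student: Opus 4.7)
The plan is to reduce the oriented Ramsey problem for $H$ directly to the ordered Ramsey problem for $H^+$ by exhibiting a transitive-tournament construction. Set $N = r_<(H^+;k) - 1$. By the definition of $r_<(H^+;k)$, there exists a $k$-edge-coloring $\chi$ of the complete graph on $[N]$ with no monochromatic ordered copy of $H^+$. I will turn $\chi$ into a coloring of the transitive tournament $\TT_N$ on $[N]$ (with all edges oriented from smaller to larger label) by assigning the edge $i\to j$ the color $\chi(\{i,j\})$, and then verify that the resulting $k$-edge-colored tournament contains no monochromatic copy of $H$.

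The key observation is that $H$ is acyclic and Hamiltonian, so its Hamiltonian directed path forces a unique topological ordering $v_1,\dotsc,v_n$: the edges $v_i\to v_{i+1}$ ensure that any topological sorting must coincide with this ordering. Consequently, any embedding $\phi: H \hookrightarrow \TT_N$ must satisfy $\phi(v_1) < \phi(v_2) < \dotsb < \phi(v_n)$, since $\TT_N$ itself is transitive and so its only acyclic orderings are refinements of its canonical one. Thus a monochromatic copy of $H$ in color $c$ inside our edge-colored $\TT_N$ corresponds to an order-preserving injection $\phi$ such that for every $v_i \to v_j$ in $H$ (which has $i<j$), the underlying edge $\{\phi(v_i),\phi(v_j)\}$ receives color $c$ under $\chi$.

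By the definition of $H^+$, the edges of $H^+$ are precisely the pairs $\{i,j\}$ with $v_i\to v_j$ in $H$, viewed as an ordered graph on $[n]$. So the injection $\phi$ is exactly an ordered embedding of $H^+$ into $[N]$ whose image is monochromatic in color $c$ under $\chi$, contradicting our choice of $\chi$. Therefore $\TT_N$ equipped with this coloring has no monochromatic copy of $H$, which yields $\ovar{r_k}(H) > N = r_<(H^+;k) - 1$, i.e., the desired inequality.

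There is no real obstacle here; the only point requiring care is verifying that the Hamiltonian hypothesis makes the topological order of $H$ unique, so that any embedding into the transitive tournament is forced to respect that order. Everything else is simply unpacking the definitions of $H^+$, ordered copies, and the oriented Ramsey number.
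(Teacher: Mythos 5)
Your proof is essentially identical to the paper's: you take a coloring of $K_N$ with $N = r_<(H^+;k)-1$ witnessing the ordered Ramsey lower bound, transfer it to the transitive tournament $\TT_N$, and use the Hamiltonian path to force any embedding of $H$ to be order-preserving, hence yielding an ordered copy of $H^+$. The only difference is that you spell out the forcing of the topological order in slightly more detail, which the paper leaves implicit.
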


\begin{proof}
If $N=r_{<}(H^+;k)-1$, there exists a $k$-edge-coloring $\chi$ of the complete graph on $[N]$
in which there is no monochromatic copy of $H^+$. Let $T$ be the transitive tournament on $[N]$ with all edges oriented forwards (i.e.\ $i\to j$ if and only if $i<j$), and the edge
between $i \to j$ also colored $\chi(i,j)$. Since $H$ has a Hamiltonian
path and all edges in $T$ point forwards in $[N]$, any copy of $H$ in $T$ corresponds to an ordered copy of $H^{+}$ in $\chi$. By construction, there is no monochromatic copy of $H$ in $T$, as desired.
\end{proof}
The two results above together can be used to prove Theorem \ref{thm:multicolor-lower-bound}.
\begin{proof}[Proof of Theorem \ref{thm:multicolor-lower-bound}]
Since $\ovar{r_k}(H)$ is nondecreasing in $k$, it suffices to prove the result for $k=2$. By Theorem \ref{thm:scrambled-matchings}, there exists a matching $M$ on $[n]$ which satisfies
$r_{<}(M;2)>n^{\log n/20\log\log n}$. Define $H$ to be the acyclic digraph on $[n]$ where $i\rightarrow j$
if $i<j$ and either $j=i+1$ or $(i,j)$ is an edge of $M$. By Lemma
\ref{lem:hamiltonian-to-ordered}, $\ovar{r_{2}}(H)\ge r_{<}(H^{+};2)$.
On the other hand, $M$ is a ordered subgraph of $H^{+}$, so $r_{<}(H^{+}; 2)\ge r_{<}(M;2)$.
It follows that
\[
\ovar{r_{2}}(H)\ge r_{<}(M;2)>n^{\log n/20\log\log n},
\]
as desired. Note that $H$ is the edge union of a path
and a matching, so it has maximum degree $3$.
\end{proof}

\subsection{The upper bound}

To prove Theorem \ref{thm:multicolor-upper-bound}, we upper bound
$k$-color oriented Ramsey numbers by $2k$-color ordered Ramsey numbers. Let $H^-$ be the ordered graph obtained from $H^+$ by reversing the vertex order.
\begin{lem}
\label{lem:ordered-to-oriented}If $k\ge1$ and $H$ is an acyclic
digraph, then
\[
\ovar{r_{k}}(H)\le r_{<}(\underbrace{H^{+},\ldots,H^{+}}_{k},\underbrace{H^{-},\ldots,H^{-}}_{k}).
\]
\end{lem}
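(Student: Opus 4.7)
The plan is to reduce the problem to the definition of the ordered Ramsey number by encoding the edge orientations of $T$ as additional colors. Given any $k$-edge-colored tournament $T$ on vertex set $[N]$ with $N=r_<(H^+,\ldots,H^+,H^-,\ldots,H^-)$, I would define a $2k$-edge-coloring $\chi$ of the complete graph on $[N]$ as follows. For each pair $i<j$, if the $T$-edge goes forward as $i\to j$ and has color $c\in[k]$ in $T$, assign $\chi(\{i,j\})=c$; if instead the $T$-edge goes backward as $j\to i$ with color $c\in[k]$, assign $\chi(\{i,j\})=k+c$. So colors $1,\ldots,k$ record ``forward'' edges and colors $k+1,\ldots,2k$ record ``backward'' edges of $T$.

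By the definition of $N$, this coloring $\chi$ must contain either an ordered monochromatic copy of $H^+$ in some color $c\in[k]$, or an ordered monochromatic copy of $H^-$ in some color $k+c$ with $c\in[k]$. In the first case, let $u_1<u_2<\cdots<u_n$ be the embedded vertices of $H^+$; whenever $i\sim j$ in $H^+$ (with $i<j$), the edge $\{u_i,u_j\}$ has color $c$, which by construction of $\chi$ means $u_i\to u_j$ in $T$ in color $c$. Since by definition $i\sim j$ in $H^+$ iff $v_i\to v_j$ in $H$, the map $v_i\mapsto u_i$ is a monochromatic copy of $H$ in $T$ of color $c$.

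In the second case, again let $u_1<u_2<\cdots<u_n$ be the vertices embedding the ordered copy of $H^-$; since $H^-$ is obtained from $H^+$ by reversing the vertex order, an edge $\{i',j'\}$ of $H^-$ with $i'<j'$ corresponds to the edge $\{n+1-j',n+1-i'\}$ of $H^+$, i.e., $v_{n+1-j'}\to v_{n+1-i'}$ in $H$. This edge has color $k+c$ in $\chi$, which by construction means $u_{j'}\to u_{i'}$ in $T$ in color $c$. Defining $\phi(v_a)=u_{n+1-a}$ and setting $a=n+1-j'$, $b=n+1-i'$, we have $\phi(v_a)=u_{j'}$ and $\phi(v_b)=u_{i'}$, so every edge $v_a\to v_b$ of $H$ is sent to an edge of $T$ in color $c$. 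Thus $\phi$ is the desired monochromatic copy of $H$ in $T$.

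There is no significant obstacle here — the argument is essentially bookkeeping, and the main subtlety is correctly matching the reversed ordering in the $H^-$ case with backward edges of $T$, which makes the two directions of each edge both accounted for by the $2k$ ordered colors.
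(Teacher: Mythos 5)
Your proposal is correct and follows exactly the same route as the paper's proof: identify $V(T)$ with $[N]$, encode each $T$-edge's color and orientation as a $2k$-coloring of $K_N$ (colors $1,\dots,k$ for forward, $k+1,\dots,2k$ for backward), and extract an ordered monochromatic $H^+$ or $H^-$, which in either case yields a monochromatic copy of $H$ in $T$. The only difference is that you spell out the index-reversal bookkeeping in the $H^-$ case, which the paper leaves implicit.
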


\begin{proof}
Let $T$ be a tournament on 
\[
N=r_{<}(\underbrace{H^{+},\ldots,H^{+}}_{k},\underbrace{H^{-},\ldots,H^{-}}_{k})
\]
vertices, with an edge-coloring $\chi$ using colors $1,\dotsc,k$. Arbitrarily identify $V(T)$
with $[N]$ and define a $(2k)$-edge-coloring $\chi'$ of $K_{N}$
by 
\[
\chi'(i,j)=\begin{cases}
\chi(i,j) & \text{if }i<j\text{ and }i\rightarrow j\\
\chi(i,j)+k & \text{else}.
\end{cases}
\]

By the definition of $N$, there is either some color $c\le k$ where
$\chi'$ has an ordered copy of $H^{+}$ in color $c$, or some color
$c>k$ where $\chi'$ has an ordered copy of $H^{-}$ in $c$. In
the former case, $T$ contains a monochromatic copy of $H$ in color
$c$ with all vertices pointed forwards in the arbitrary ordering,
and in the latter case $T$ contains a monochromatic copy of $H$
in color $c-k$ with all edges pointed backwards. In either case,
$T$ contains a monochromatic copy of $H$, as desired.
\end{proof}
To prove an upper bound on $\ovar{r_{k}}(H)$, it remains to generalize the
following upper bound of \cite{CoFoLeSu} on $2$-color ordered Ramsey numbers
to more colors.

\begin{thm}[{\cite[Theorem 3.6]{CoFoLeSu}}]
\label{thm:2-color-ordered}  If $H$ is an ordered graph on at most $n$ vertices with degeneracy $d\ge 2$, then
\[
r_{<}(H,K_n) \le 2^{O(d\log ^2(2n/d))}.
\]
\end{thm}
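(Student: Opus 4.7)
The plan is to adapt the classical dependent random choice plus greedy embedding strategy that bounds the (unordered) Ramsey number $r(H, K_n) \leq 2^{O(d \log(2n/d))}$ for $d$-degenerate $H$, paying one additional factor of $\log(2n/d)$ in the exponent to accommodate the linear order on $V(H)$. Let $N = 2^{Cd \log^2(2n/d)}$ for a large absolute constant $C$, and fix a coloring of $E(K_N)$ in red and blue containing no blue $K_n$; the goal is to produce a red ordered copy of $H$.

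The main technical step is a structural lemma producing a sequence of pairwise disjoint sets $U_1 < U_2 < \cdots < U_n$ inside $[N]$, where $U_i$ lies entirely to the left of $U_{i+1}$, each $|U_j| \geq n$, with the property that for every $d$-element subset $S \subseteq \bigcup_j U_j$, the common red neighborhood of $S$ meets each $U_j$ in at least $n$ vertices. To construct the $U_j$, I would partition $[N]$ into $n$ consecutive intervals of equal size $N/n$ and iteratively apply a position-aware dependent random choice: at each round, sample a uniformly random $d$-tuple from the current sets, and restrict each interval to the common red neighbors of this tuple. After $O(\log(2n/d))$ rounds, standard dependent random choice bookkeeping shows that each interval shrinks by a factor of at most $2^{O(d \log(2n/d))}$ per round, while the desired common-neighborhood property holds in expectation. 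Once the sets $U_1, \ldots, U_n$ are in hand, I would greedily embed $H$ along a $d$-degenerate ordering of $V(H)$, placing each vertex into the set $U_j$ dictated by its position in the given order on $V(H)$, and using the rich-neighborhood property to find a valid image adjacent in red to its already-embedded earlier neighbors.

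The main obstacle is to extract the rich sets $U_j$ without paying a factor of $n$ or worse for the ordered constraint. Naively applying dependent random choice pairwise between intervals would cost $\Omega(n^2)$ rounds in total. To avoid this, I would invoke the no-blue-$K_n$ hypothesis globally: if no such sequence $U_1 < \cdots < U_n$ exists, then one can iteratively extract vertices whose red neighborhoods are atypically small across many intervals, and a pigeonholing argument across $O(\log n)$ such extractions builds a blue $K_n$, yielding a contradiction. This global argument is where the extra factor of $\log(2n/d)$ in the final exponent appears, giving the claimed bound $2^{O(d \log^2(2n/d))}$.
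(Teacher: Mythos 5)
The paper only cites this result from Conlon--Fox--Lee--Sudakov and does not reprove it, so the comparison is against the proof in \cite{CoFoLeSu}. That proof does not use dependent random choice: it is a divide-and-conquer recursion built on an Erd\H os--Hajnal-style greedy embedding lemma (of exactly the type appearing as Lemma~\ref{lem:dense-pair} and Lemma~\ref{lem:build-skeleton} in the present paper, which are explicitly modeled on the CoFoLeSu argument). One bisects the vertex order, uses the failure of a greedy embedding to extract a dense pair of one color, uses the absence of a blue $K_n$ to control the blue-dense branch, and recurses to depth $O(\log(2n/d))$, paying a factor $2^{O(d\log(2n/d))}$ per level. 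Your proposed route through dependent random choice is a genuinely different strategy.

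However, the proposal has real gaps. First, the richness property you request---that \emph{every} $d$-subset of $\bigcup_j U_j$ has common red neighborhood of size at least $n$ in \emph{every} interval $U_j$---is far stronger than what a round of dependent random choice gives. DRC shows that for \emph{most} $d$-subsets of the surviving set the common neighborhood is large; a standard deletion step fixes the bad $d$-subsets, but deleting vertices to kill bad $d$-subsets will generically destroy the pigeonholing of each $U_j$ down to size at least $n$ across all $n$ intervals simultaneously, and you do not explain how to avoid this. Second, the role of the $O(\log(2n/d))$ iterations is not established: in the unordered Fox--Sudakov argument a \emph{single} round of DRC already yields a rich set and shrinkage $2^{O(d\log(2n/d))}$, and you never explain concretely why the ordered constraint forces $O(\log(2n/d))$ independent rounds rather than, say, $O(1)$ or $O(n)$. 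Third, the ``global'' step is hand-waved: you say that the failure to find the sets $U_1 < \cdots < U_n$ lets you iteratively extract vertices with small red degree across many intervals and that ``pigeonholing across $O(\log n)$ extractions'' builds a blue $K_n$, but $O(\log n)$ vertices cannot give a blue $K_n$; the quantitative contradiction is missing. Fourth, the absence of a blue $K_n$ does not by itself bound the global red density from below, which is the quantity that controls DRC shrinkage; the CoFoLeSu recursion sidesteps this issue because the dense-pair lemma localizes the density deficit into a single pair of sets of the appropriate color, a mechanism your sketch lacks. The intuition that a $\log^2$ in the exponent should come from a $\log$-depth recursion with a $\log$ cost per level is correct and matches the shape of the CoFoLeSu argument, but the DRC-based mechanism you propose for producing that recursion does not clearly close.
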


The multicolor bound can now be obtained by iterating the above theorem.

\begin{thm}
\label{thm:multi-color-ordered}If $k,d\ge2$ and $H_{1},\ldots,H_{k-1}$
are $d$-degenerate ordered graphs on at most $n$ vertices, then
\[
r_{<}(H_{1},\ldots,H_{k-1},K_{n})\le2^{O_{k,d}(\log^{2^{k-1}}n)}.
\]
\end{thm}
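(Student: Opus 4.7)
The plan is to deduce Theorem \ref{thm:multi-color-ordered} from Theorem \ref{thm:2-color-ordered} by induction on $k$, using the standard color-merging trick for multi-color Ramsey numbers. The base case $k=2$ is immediate: since $d$ is a constant (allowed to appear in $O_{k,d}(\cdot)$), Theorem \ref{thm:2-color-ordered} gives $r_{<}(H_1, K_n) \le 2^{O(d \log^2(2n/d))} = 2^{O_{d}(\log^2 n)}$, which matches the claimed bound for $k=2$ since $2^{k-1} = 2$.

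For the inductive step, assume the bound for $k-1$, and set
\[
N_{k-1} \coloneqq r_{<}(H_2, \ldots, H_{k-1}, K_n) \le 2^{O_{k-1,d}(\log^{2^{k-2}} n)}.
\]
I claim that $r_{<}(H_1, \ldots, H_{k-1}, K_n) \le r_{<}(H_1, K_{N_{k-1}})$. Indeed, given any $k$-edge-coloring $\chi$ of the complete graph on $[N]$ with $N = r_{<}(H_1, K_{N_{k-1}})$, define a $2$-edge-coloring $\chi'$ by keeping color $1$ as is (``red'') and merging colors $2, \ldots, k$ into a single color (``blue''). Applying Theorem \ref{thm:2-color-ordered} (since $H_1$ is $d$-degenerate on at most $n \le N_{k-1}$ vertices, and we only need to avoid an ordered copy of $H_1$ in red and a blue $K_{N_{k-1}}$), either $\chi'$ contains an ordered copy of $H_1$ in red, in which case $\chi$ does too, or it contains a blue clique of size $N_{k-1}$. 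This clique uses only the colors $2, \ldots, k$ under $\chi$, so by definition of $N_{k-1}$ it contains an ordered copy of $H_i$ in color $i$ for some $2 \le i \le k-1$, or a monochromatic $K_n$ in color $k$.

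It then remains to bound $r_{<}(H_1, K_{N_{k-1}}) \le 2^{O(d \log^2(2 N_{k-1}/d))}$, again using Theorem \ref{thm:2-color-ordered} (applied with ``$n$'' set to $N_{k-1}$, noting that $H_1$ has at most $n \le N_{k-1}$ vertices). Taking logarithms and substituting the inductive bound yields
\[
\log r_{<}(H_1, K_{N_{k-1}}) \le O(d \log^2 N_{k-1}) \le O_{d} \!\left( \bigl(O_{k-1,d}(\log^{2^{k-2}} n)\bigr)^{\!2} \right) = O_{k,d}(\log^{2^{k-1}} n),
\]
completing the induction.

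There is no substantive obstacle here; this is a routine iteration of the two-color bound, with the doubling of the exponent of $\log n$ at each step arising from the squaring that occurs when applying the two-color bound at scale $N_{k-1}$. The only care needed is to verify that $H_1$ is still $d$-degenerate on at most $N_{k-1}$ vertices (it is, since $n \le N_{k-1}$), so that Theorem \ref{thm:2-color-ordered} applies without degradation of the degeneracy parameter.
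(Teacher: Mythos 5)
Your proof is correct and follows essentially the same route as the paper: induction on $k$, merging all colors except the first into a single ``super-color'' to reduce to $r_{<}(H_1, K_{N_{k-1}})$, and then invoking Theorem~\ref{thm:2-color-ordered}. The only tiny stylistic point is that the inequality $r_{<}(H_1,\ldots,H_{k-1},K_n)\le r_{<}(H_1,K_{N_{k-1}})$ follows directly from the definition of the ordered Ramsey number and the color-merging trick, without any appeal to Theorem~\ref{thm:2-color-ordered} at that intermediate step.
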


\begin{proof}
We prove the theorem by induction on $k$. The base case $k=2$ is just Theorem~\ref{thm:2-color-ordered}. For the inductive step, note that if $M= r_<(H_2,\ldots, H_{k-1}, K_n) \le 2^{O(\log^{2^{k-2}} n)}$ then one obtains
\[
r_<(H_1, H_2, \ldots, H_{k-1}, K_n) \le r_<(H_1, K_M),
\]
by combining the last $k-1$ colors into one ``super-color.'' It follows by applying the base case that
\[
r_<(H_1, H_2, \ldots, H_{k-1}, K_n) \le r_<(H_1, K_M) \le 2^{O(\log ^2 M)} \le 2^{O(\log^{2^{k-1}}n)},
\]
as desired.
\end{proof}
Theorem \ref{thm:multicolor-upper-bound} follows by combining Lemma
\ref{lem:ordered-to-oriented} with Theorem \ref{thm:multi-color-ordered}.

\section{Concluding Remarks}\label{sec:concluding}

In this section we collect a few appealing open problems on the Ramsey numbers of digraphs. For $k,\Delta \ge 1$ and $n > \Delta$, let $H_{k,n,\Delta}$ be an acyclic digraph $H$ with $n$ vertices and maximum degree $\Delta$ maximizing the value of $\ovar{r_k}(H)$. Much of this paper was devoted to understanding the growth rate of $\ovar{r_k}(H_{k,n,\Delta})$ for fixed $k$ and $\Delta$.

We first consider the one-color case. Theorem~\ref{thm:general-lower-bound}, Lemma~\ref{lem:ordered-to-oriented}, and Theorem~\ref{thm:2-color-ordered} together show
\begin{equation}\label{eq:one-color-state}
n^{\Omega(\Delta^{2/3}/\log^{5/3}\Delta)}\le \ovar{r_1}(H_{1,n,\Delta}) \le 2^{O(\Delta\log ^2 (2n/\Delta))}
\end{equation}
While we do not know whether the above Ramsey number grows polynomially or super-polynomially in $n$ for $k=1$, we also showed that for $k\ge 2$ and $\Delta \ge 3$, $\ovar{r_k}(H_{k,n,\Delta})$ is at least $n^{\Omega(\log n/\log \log n)}$. We conjecture that a super-polynomial growth rate is also possible for $k=1$.
\begin{conj}\label{conj:super-poly}
There exists an absolute constant $\Delta$ such that
\[
\ovar{r_{1}}(H_{1,n,\Delta})\ge n^{\omega(1)}.
\]
\end{conj}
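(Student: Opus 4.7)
The goal is to exhibit, for some absolute constant $\Delta$, a family of acyclic digraphs $H_n$ on $n$ vertices with maximum degree at most $\Delta$ such that $\ovar{r_1}(H_n)$ grows faster than any polynomial in $n$. The interval mesh construction of Theorem \ref{thm:general-lower-bound} produces only polynomial lower bounds: the tournament $R^m$ has $r^m$ vertices with $r$ a fixed constant and $m = \Theta(\log n)$, giving an obstacle of size at most $n^{O(\log r)}$. To beat any polynomial one needs the ``number of factors'' in the tournament construction to grow with $n$, or a fundamentally different obstruction mechanism.

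My first strategy would be to iterate the $(R,f,s)$-walk framework along a genuinely hierarchical construction. Rather than a single tournament $R$ with no large transitive subtournament, I would take a nested sequence $R_1, R_2, \dotsc, R_L$ of tournaments with progressively stricter avoidance properties and form $T = R_1 \cdot R_2 \cdots R_L$. Correspondingly, the target $H_n$ would be a ``tiered interval mesh'' whose dyadic edge-length profile is matched to the parameters of each $R_i$, so that any embedding $\phi : H_n \hookrightarrow T$ forces a constrained walk at every level of the product. The hope is that each level strictly reduces the effective number of vertices that can absorb edges of $H_n$ by a super-constant factor; compounding $L = \omega(1)$ such savings should give super-polynomial growth. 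Making this rigorous requires a multi-scale generalization of Lemma \ref{lem:Rfs-specific}.

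A second strategy is to import the scrambled-matching lower bound (Theorem \ref{thm:scrambled-matchings}) used in the multicolor case. In Lemma \ref{lem:hamiltonian-to-ordered}, the two colors were encoded as forward and backward edges in an ordered graph. To simulate this with a single color, one might augment a random bounded-degree digraph with many small ``gadgets''---acyclic subdigraphs of bounded degree whose only tournament realizations force a particular orientation pattern on a reference edge. If such gadgets can be constructed with both bounded degree and strong enough rigidity, then embedding the combined digraph into a tournament $T$ would implicitly extract a $2$-coloring of a large transitive sub-structure of $T$, to which Theorem \ref{thm:scrambled-matchings} could be applied.

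The main obstacle, I expect, is the absence of any genuine Ramsey game in a single color: a tournament does not canonically decompose into monochromatic parts, so neither strategy has a clean pivot step. In the first strategy, iterating lexicographic products beyond $O(\log n)$ levels makes the innermost tournaments $R_i$ too small to have any nontrivial avoidance property. In the second, designing bounded-degree rigid gadgets that force a specific orientation of a reference edge in \emph{every} host tournament seems hard, essentially because bounded-degree digraphs are known to embed into random tournaments with only polylogarithmic overhead (Theorem \ref{thm:G(n,d)-almost-linear}). A successful resolution of the conjecture will likely require either a new family of pseudorandom tournaments with finer multi-scale obstructions, or a new class of target digraphs exhibiting some notion of multiscale complexity strictly stronger than that of interval meshes.
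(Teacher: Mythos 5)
This statement is Conjecture~\ref{conj:super-poly}, an open problem stated in the concluding remarks; the paper offers no proof of it, and indeed does not even claim one. Your ``proof proposal'' correctly recognizes this and is an honest discussion of potential strategies and obstacles rather than an argument, so there is nothing to compare against a paper proof.

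That said, a brief assessment of your analysis is warranted. Your diagnosis of why the interval-mesh construction tops out at a polynomial is accurate: with a fixed base tournament $R$ on $r$ vertices, the recursion of Lemma~\ref{lem:walk-reduction} gives $m(n) = \Theta(\log n)$, so the avoiding tournament $R^m$ has only $r^{O(\log n)} = n^{O(\log r)}$ vertices, and one cannot take $m = \omega(\log n)$ without the interval mesh running out of vertices to partition. Your second idea, of using gadgets to extract an implicit $2$-coloring, is where I would push back slightly: the obstacle is not just that gadgets might fail in random tournaments, but that in a single-color tournament there is no analogue of the ``super-color'' trick used in Theorem~\ref{thm:multi-color-ordered}, so even a rigid gadget at one location gives no control over orientations elsewhere. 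Your concluding remark that a resolution likely requires either a new family of pseudorandom tournaments with finer multiscale structure, or target digraphs with multiscale complexity strictly stronger than interval meshes, matches the authors' own view of the difficulty as expressed in the surrounding discussion of multiscale complexity; the conjecture remains genuinely open.
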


In the case of $k\ge 2$ colors, Theorems~\ref{thm:multicolor-lower-bound} and~\ref{thm:multicolor-upper-bound} together imply
\begin{equation}\label{eq:multi-color-state}
n^{\Omega(\log n/\log\log n)} \le \ovar{r_{k}}(H_{k,n,\Delta}) \le 2^{O(\log^{2^{2k-1}} n)}.
\end{equation}
It would be interesting to determine even the logarithmic order of $\ovar{r_{k}}(H_{k,n,\Delta})$.

\begin{problem} \label{prob:log-order}
For any fixed $k \ge 1$ and $\Delta\ge 2$, determine the order of growth of $\log \ovar{r_{k}}(H_{k,n,\Delta})$.
\end{problem}

To our knowledge, the only solved case of Problem~\ref{prob:log-order} is $k=1$, $\Delta = 2$. Here, $H_{1,n,2}$ must be a disjoint union of arbitrarily oriented paths and non-strongly oriented cycles (an acyclic $H$ cannot contain a strongly oriented cycle). Thomason~\cite{Thomason} proved that if $n$ is large enough, $\ovar{r_1}(C_n) = n$ for any non-strongly oriented cycle $C_n$ on $n$ vertices, which can be used to show $\ovar{r_{1}}(H_{1,n,2}) = n+O(1)$.

It would be interesting to improve either side of (\ref{eq:multi-color-state}) substantially. We tentatively conjecture that neither side is close to the truth, in the following quantitative form.

\begin{conj}
There exist $\Delta \ge 3$ and constants $c_k = \omega_k(1)$ and $C_k = 2^{o(k)}$ such that
\[
\Omega(\log ^{c_k} n) \le \log \ovar{r_{k}}(H_{k,n,\Delta}) \le O(\log ^{C_k} n).
\]
\end{conj}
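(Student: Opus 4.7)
My plan is to attack the two inequalities separately, since they require orthogonal improvements over the current state of the art. For the lower bound, the target is to exhibit, for every $k$, an acyclic digraph $H$ of bounded degree with $\log \ovar{r_k}(H) \geq \log^{c_k} n$ for some $c_k \to \infty$. The natural starting point is the reduction $\ovar{r_k}(H) \geq r_<(H^+; k)$ from Lemma \ref{lem:hamiltonian-to-ordered}, applied to the acyclic Hamiltonian digraph formed by the union of a Hamilton path and a random matching $M$ on $[n]$. Thus it suffices to show that $r_<(M; k)$ grows as $n^{(\log n/\log\log n)^{g(k)}}$ for some $g(k) \to \infty$. I would attempt a careful multicolor version of the probabilistic deletion argument of Conlon, Fox, Lee, and Sudakov \cite{CoFoLeSu}: when bounding the number of ``bad'' configurations in the random coloring, the available budget grows as $k^{\binom{s}{2}}$ rather than $2^{\binom{s}{2}}$, and one should be able to leverage this extra flexibility across the $\log n/\log\log n$ dyadic scales of their construction to gain an exponent that grows with $k$.

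For the upper bound, the desired $C_k = 2^{o(k)}$ is incompatible with the iterative argument of Theorem \ref{thm:multi-color-ordered}, which squares the exponent of $\log n$ at every step to obtain $C_k = 2^{k-1}$. My plan is to avoid this squaring, either by (i) sharpening the two-color base Theorem \ref{thm:2-color-ordered} to $r_<(H, K_n) \leq 2^{O(\log^{1 + \varepsilon} n)}$ with $\varepsilon = \varepsilon(k) \to 0$ sufficiently quickly in $k$, so that $k-1$ iterations yield an exponent of $(1+\varepsilon)^{k-1} = 2^{o(k)}$; or (ii) executing the Conlon--Fox--Lee--Sudakov multiscale embedding directly in the $k$-color setting. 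Approach (ii) seems more promising: in the 2-color proof, the recursion $\log N \mapsto \log^2 N$ arises from paying simultaneously for dyadic depth and for color ratio, but in $k$ colors one should be able to apportion the cost using the dyadic complexity framework of Section \ref{sec:Greedy-embedding}, so that each color contributes only a polylogarithmic factor rather than a squaring of the exponent.

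The main obstacle in both directions is the two-color ordered Ramsey problem itself. The upper bound $r_<(H, K_n) \leq 2^{O(\log^2 n)}$ from Theorem \ref{thm:2-color-ordered} has been the best known for over a decade, and the lower bound $r_<(M; 2) \geq n^{\log n/\log\log n}$ of \cite{CoFoLeSu, BaCiKrKy} has similarly resisted improvement. Both barriers sit at the limit of current embedding and deletion techniques. I would first invest effort into refining the two-color case using the multiscale complexity framework developed in this paper as a fresh angle, since any genuine improvement---even a vanishing $\varepsilon$ in the exponent---propagates through the iteration to give the upper-bound half of the conjecture. The lower-bound half appears harder still, because one needs the exponent to genuinely grow with $k$; here a new multicolor analogue of the random-matching deletion argument, or alternatively a multicolor version of the interval-mesh and $(R,f,s)$-walk lower bound developed in Section \ref{sec:Lower-bounds}, appears to be the only plausible route.
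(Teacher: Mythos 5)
This statement is an open conjecture in the paper's concluding remarks; the paper offers no proof of it, so there is nothing to compare your argument against. What you have written is (appropriately) a research plan rather than a proof, and you correctly identify the central obstacles: the $2^{2k-1}$ exponent in Theorem~\ref{thm:multi-color-ordered} comes from squaring the $\log$-exponent at each of $k-1$ iterations of the two-color base case, and the lower bound currently has no $k$-dependence at all beyond $k=2$. Your diagnosis that both halves ultimately bottleneck on the two-color ordered Ramsey problem (the gap between $n^{\log n/\log\log n}$ and $2^{O(\log^2 n)}$) is accurate.

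That said, I want to flag a few places where the sketch is thinner than it may appear. For the upper bound via route (i), iterating a hypothetical bound $r_<(H,K_n)\le 2^{O_\varepsilon(\log^{1+\varepsilon}n)}$ does give exponent $(1+\varepsilon)^{k-1}$, and this is $2^{o(k)}$ only if you may take $\varepsilon=\varepsilon(k)\to 0$; so what you actually need is $r_<(H,K_n)\le 2^{(\log n)^{1+o(1)}}$, i.e.\ a near-resolution of the two-color ordered problem, not merely a ``sharpening.'' Route (ii) is more promising in spirit, but ``apportion the cost using the dyadic complexity framework'' does not yet identify where the $k$-coloring enters the skeleton construction of Lemma~\ref{lem:build-skeleton}, which is a one-color statement about tournaments; making the greedy embedding color-aware is a genuinely new ingredient, not a rearrangement of what is already there. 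For the lower bound, the observation that the deletion budget scales as $k^{\binom{s}{2}}$ rather than $2^{\binom{s}{2}}$ is a true but local gain: in the Conlon--Fox--Lee--Sudakov argument the exponent $\log n/\log\log n$ comes from the number of dyadic scales, and it is not clear that extra color budget per scale compounds into a power $g(k)$ of that exponent rather than just a better constant. An alternative, which you mention only in passing, is a multicolor version of the interval-mesh/$(R,f,s)$-walk construction of Section~\ref{sec:Lower-bounds}; this would require a coloring of $R^m$ (not just an orientation) against which interval meshes are hard to embed monochromatically, and nothing in the current proof of Lemma~\ref{lem:walk-reduction} suggests how the walk condition should interact with colors. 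In short, the plan is sensible and the obstacles are correctly located, but each of the proposed attacks still has a missing central idea.
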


In the one-color case, another open problem we find interesting is to determine more precisely the Ramsey number of a sparse random digraph. Theorem \ref{thm:G(n,d)-almost-linear} shows that for fixed $d$, the Ramsey number of $\ovar G(n,d)$ is w.h.p.\ bounded above by $n(\log n)^{O_d(1)}$. We expect that the answer is in fact linear. 

\begin{conj}
\label{conj:random}
If $d\ge 2$ is fixed and $H = \ovar{G}(n,d)$, then w.h.p. $\ovar{r_1}(H) = O_d (n).$
\end{conj}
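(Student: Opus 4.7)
The plan is to eliminate the polylogarithmic overhead in Theorem \ref{thm:G(n,d)-almost-linear} by improving the general bound from Theorem \ref{thm:general-arboricity}. Tracing the expression $(r\Delta)^{6d(\log r)^2}\, m^{2\log r}\, n$ for $H = \ovar G(n,d)$ (with $r = 10d$, $\Delta = d$, and degeneracy $d$), the only factor depending on $n$ beyond the leading $n$ is $m^{2\log r}$, where $m$ is the maximum component size in a forest partition. Since $\ovar G(n,d)$ has girth $\Omega(\log n / \log d)$ w.h.p., one cannot hope to reduce $m$ below $\Theta(\log n)$, so this polylogarithmic factor is intrinsic to the current framework. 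Thus the plan is to eliminate the $m$-dependence from Theorem \ref{thm:general-arboricity} entirely, after which plugging in $r, \Delta = O(d)$ immediately gives $\ovar{r_1}(\ovar G(n,d)) \leq d^{O(d \log^2 d)} n = O_d(n)$.

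The $m$-factor is inherited by Theorem \ref{thm:general-arboricity} from Lemmas \ref{lem:biclique-skeleton}, \ref{lem:one-biclique-again}, and \ref{lem:one-biclique}, all of which ultimately trace it to the basic building block Lemma \ref{lem:one-biclique-basic}. There, the $m$ enters because the candidate sets $U_i$ are split into $m$ disjoint buckets $V_i$ of size $M/m$ to a priori enforce injectivity of the embedding across the tree. The desired strengthening—consistent with the conjectural improvement flagged in the remark after that lemma—asserts that for any weakly connected $1$-degenerate digraph $H$ on $m$ vertices with maximum degree $\Delta$ and candidate sets $U_1, \dotsc, U_m \subseteq V(T)$ of size $M \geq \mathrm{poly}(\Delta)$, either $H$ embeds into $T$ with $\phi(v_i) \in U_i$, or $T$ contains a $1$-dense pair of size $\Omega(M/\mathrm{poly}(\Delta))$, with no $m$-dependence. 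Granting this, the propagation through Lemmas \ref{lem:one-biclique}, \ref{lem:one-biclique-again}, \ref{lem:biclique-skeleton}, and Theorem \ref{thm:general-arboricity} preserves a purely $\Delta$-dependent constant, and the linear bound follows.

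The hard part is thus proving the improved Lemma \ref{lem:one-biclique-basic}. The natural approach is to abandon the a priori disjoint splitting of the $U_i$ in favor of a random (or dependent-random) embedding: root the tree $H$ at some $v_1$ and process vertices in BFS order, choosing each image $\phi(v)$ from a live candidate set whose size shrinks by only a $\mathrm{poly}(\Delta)$ factor per embedded neighbor, and crucially not by a factor depending on $m$. Since each embedded vertex has at most $\Delta$ neighbors in $H$, the total expected shrinkage of any remaining candidate set depends only on $\Delta$. Injectivity can be enforced a posteriori via a union bound: the at most $m-1$ already-embedded vertices remove that many elements from each candidate set, which is negligible once $M \gg m$. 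Upon failure of the embedding at some step $t$, a pigeonhole argument over the at most $\Delta$ neighbors of $v_t$ (in place of the current pigeonhole over $m$ buckets) should produce a popular neighbor $v_j$ and a large subset of $U_t$ with small density toward $U_j$ in the required orientation, yielding the desired $1$-dense pair of size $\Omega(M/\mathrm{poly}(\Delta))$. Executing this random-embedding scheme cleanly in the oriented setting, with careful control over the dependencies across the tree and over the probability that images collide, is the principal technical obstacle—precisely the obstacle that the conjectural improvement to Lemma \ref{lem:one-biclique-basic} leaves open.
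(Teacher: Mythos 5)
The statement you were asked to prove is labeled as a conjecture (Conjecture \ref{conj:random}), and the paper does not prove it; the authors only reduce it to another open conjecture. Your proposal faithfully reproduces that same reduction: the concluding remarks (Conjecture \ref{conj:better-biclique} and the surrounding discussion) identify exactly the chain you trace, namely that the only obstruction to a linear bound is the factor $m$ (maximum tree-component size) inherited from Lemma \ref{lem:one-biclique-basic} through Lemmas \ref{lem:one-biclique}, \ref{lem:one-biclique-again}, \ref{lem:biclique-skeleton} and Theorem \ref{thm:general-arboricity}, and that deleting this factor would give $\ovar{r_1}(\ovar G(n,d)) = O_d(n)$ upon plugging in $r = O(d)$, $\Delta = d$, degeneracy $d$. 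Up to this point your reasoning and the paper's are the same.

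The genuine gap is that you do not prove the $m$-free strengthening of Lemma \ref{lem:one-biclique-basic} (Conjecture \ref{conj:better-biclique}); you explicitly defer it as ``the principal technical obstacle.'' Your sketch---process the tree in BFS order, choose images at random, enforce injectivity a posteriori by a union bound, and pigeonhole over the at most $\Delta$ neighbors of a failing vertex---is plausible but unverified, and the difficulty is concrete. The paper's proof of Lemma \ref{lem:one-biclique-basic} is not a forward greedy embedding: it builds witness sets $W_t$ bottom-up in reverse degeneracy order, and the disjoint buckets $V_i \subseteq U_i$ of size $M/m$ are precisely what make a witness $w \in W_t$ certify an \emph{injective} embedding of $H[S_t]$ across branches of the tree. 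Replacing that disjointification with a random embedding requires controlling collisions between images chosen in different subtrees that are later merged, and this cross-branch dependency is where your union-bound plan is under-specified. The paper itself flags this as the sticking point: Conjecture \ref{conj:better-biclique} ``is not hard to prove if the injectivity constraint on $\phi$ is removed,'' so it is exactly the injectivity analysis that remains missing. As written, your argument is a reduction of one open conjecture to another, not a proof.
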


This would follow from our techniques if one could prove the following strengthening of Lemma \ref{lem:one-biclique-basic}, in which the size of the $1$-dense pair depends only on the maximum degree $\Delta$ (and not on the number $m$ of vertices).
\begin{conj}
\label{conj:better-biclique}
For every $\Delta \geq 1$, there exists $C_\Delta>0$ such that the following holds. Let $H$ be a $1$-degenerate digraph with
maximum degree $\Delta$ on $m$ vertices $v_{1},\ldots,v_{m}$, and
let $T$ be an arbitrary tournament. If there exist sets $U_{1},\ldots,U_{m}\subseteq V(T)$, each
of size $M\geq C_\Delta m$, such that there is no embedding $\phi:H\hookrightarrow T$
satisfying $\phi(v_{i})\in U_{i}$, then $T$ contains a $1$-dense
pair with size at least $M/C_\Delta$.
\end{conj}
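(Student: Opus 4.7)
The plan is to refine the greedy embedding strategy used to prove Lemma~\ref{lem:one-biclique-basic}, with the goal of removing the factor of $m$ that appears there because of the initial step of choosing disjoint subsets $V_i \subseteq U_i$ of size $M/m$. First, by the same component-by-component reduction used to derive Lemma~\ref{lem:one-biclique} from Lemma~\ref{lem:one-biclique-basic}---noting that after embedding one component we lose at most $m$ vertices from each $U_i$, which is negligible compared to $M \geq C_\Delta m$ when $C_\Delta$ is large enough---it suffices to treat the case where $H$ is a weakly connected $1$-degenerate digraph, namely an oriented tree, rooted at some arbitrary vertex $r$.

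Next, for a parameter $K$ to be chosen, I would define sets $\hat W_v \subseteq U_v$ inductively from leaves up to the root: set $\hat W_v = U_v$ when $v$ is a leaf, and for an internal vertex $v$ with children $c_1, \dotsc, c_s$, declare $w \in U_v$ to lie in $\hat W_v$ if and only if for every child $c_j$, $w$ has at least $K$ edges in the correct orientation (matching the orientation of the edge between $v$ and $c_j$ in $H$) to $\hat W_{c_j}$. The central observation should be that if $K \geq m$ and $\hat W_r$ is nonempty, then $H$ embeds in $T$: one picks any $w_r \in \hat W_r$ and inductively picks $w_{c_j}$ in the correctly oriented neighborhood of $w$ inside $\hat W_{c_j}$ avoiding the set $F$ of previously embedded images; since $|F| < m \leq K$, such a choice always exists. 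Contrapositively, when no embedding exists, $\hat W_r$ must be empty, so I would locate a vertex $v$ that is lowest in the tree (closest to the leaves) among those with $|\hat W_v| < M/C_\Delta$; by construction, every child $c$ of $v$ then satisfies $|\hat W_c| \geq M/C_\Delta$.

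At this critical vertex $v$, the set $U_v \setminus \hat W_v$ has size at least $M - M/C_\Delta$, and pigeonholing over the at most $\Delta$ children of $v$ singles out a child $c$ and a set $X \subseteq U_v$ of size $\Omega(M/\Delta)$ such that every $w \in X$ has fewer than $K$ correctly oriented edges to $Y \coloneqq \hat W_c$. This would suffice if we could conclude that $(Y,X)$ (or a large subpair) is a $1$-dense pair of size at least $M/C_\Delta$. However, with $K \geq m$ the naive argument only produces an approximately $(1 - K/|Y|)$-dense pair, and passing to a genuine $1$-dense subpair by deleting the offending endpoints in $Y$ reduces $|Y|$ by up to $K|X|$ vertices, reproducing the bound of $O(M/(mC_\Delta))$ from Lemma~\ref{lem:one-biclique-basic}.

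The heart of the matter, and what I expect to be the main obstacle, is the tension between the two requirements on $K$: the greedy embedding appears to need $K \geq m$ to absorb the up-to-$m$ forbidden images, but any $K$ larger than $O(\Delta)$ prevents $(Y,X)$ from being genuinely $1$-dense. Two plausible routes present themselves for breaking this impasse. The first is to strengthen the embedding step so that it requires only $K = O_\Delta(1)$ slack rather than $K \geq m$, perhaps via a random embedding with dependencies controlled by the Lov\'asz Local Lemma (in the spirit of the remark following Lemma~\ref{lem:inner-stage}), an algorithmic resampling or entropy-compression argument, or a Hall-type matching argument that uses the tree structure to guarantee distinct representatives once a mild local surplus is maintained. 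The second is to iterate the weak dense-pair conclusion to bootstrap it into a true $1$-dense pair of size $M/C_\Delta$, perhaps by restricting to the induced sub-tournament on an approximately $1$-dense pair and recursing. In either approach, new ideas beyond the direct greedy framework in this paper will be needed, and making the slack parameter $K$ depend only on $\Delta$ is the principal technical hurdle I foresee.
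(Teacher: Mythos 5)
This statement is an open conjecture in the paper; the authors state it as Conjecture~\ref{conj:better-biclique} precisely because they were unable to prove it, so there is no ``paper's own proof'' to compare against. Your analysis of the obstruction is essentially correct and matches the implicit message of the paper: the greedy embedding framework behind Lemma~\ref{lem:one-biclique-basic} inherently pays a factor of $m$ (or equivalently needs slack $K \geq m$ in your formulation), because the candidate sets must absorb up to $m-1$ previously embedded images, while a genuine $1$-dense pair can only tolerate $O_\Delta(1)$ exceptional edges per vertex. Your bookkeeping is right: with $|X| = \Omega(M/\Delta)$, $|Y| = \Omega(M/C_\Delta)$, and each $w \in X$ having $< K$ out-neighbors in $Y$, deleting all $Y$-endpoints of $Y\to X$ edges can remove up to $K|X|$ vertices, which already exceeds $|Y|$ once $K \gg \Delta/C_\Delta$, so no nontrivial $1$-dense subpair survives.

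The two escape routes you sketch (a Lov\'asz Local Lemma or resampling-based embedding that reduces the needed slack from $m$ to $O_\Delta(1)$, or bootstrapping an approximately $1$-dense pair into a genuine one by recursion) are natural and neither is obviously doomed, but both remain unresolved; in particular, the LLL idea is what the authors gesture at after Lemma~\ref{lem:inner-stage}, and it is not clear that it transfers to the present setting where the $\hat W_v$ sets are defined by thresholds rather than by the clean density hypotheses of that lemma. To be clear, what you have written is a correct diagnosis and a roadmap, not a proof; since the statement is genuinely open, that is the appropriate conclusion, and the main thing to flag is that you should not present this as if it settled the conjecture.
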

Indeed, if one could prove Conjecture \ref{conj:better-biclique}, then it would imply that Lemma \ref{lem:one-biclique-again}, Lemma \ref{lem:biclique-skeleton}, Theorem \ref{thm:embed-two-labelings}, and Theorem \ref{thm:general-arboricity} would all no longer depend on $m$, the maximum size of a tree component in a directed partition of $H$ into oriented forests. In particular, this would imply Conjecture~\ref{conj:random}. Moreover, it is entirely possible that Conjecture~\ref{conj:better-biclique} could be proven using similar greedy embedding arguments, since the conjecture is not hard to prove if the injectivity constraint on $\phi$ is removed. 

We would also like to highlight one powerful digraph embedding technique that we have not used in this paper, but are hopeful can be incorporated into our arguments to prove more general results. The {\it median ordering} of a tournament $T$ is the vertex ordering $v_1,\ldots,v_n$ maximizing the number of forward edges. To see the power of the median ordering, note that $v_i\rightarrow v_{i+1}$ for every $1\le i\le n-1$ in this ordering, so this immediately exhibits a Hamiltonian path in $T$. Previous work showing linear upper bounds on $\ovar{r_1}(H)$ when $H$ is an oriented tree (e.g.\ \cite{ElSahili}) or an acyclic digraph of bounded bandwidth~\cite{Draganicetal} all depend on embedding $H$ into a tournament $T$ in some iterative manner according to its median ordering. We were not able to reproduce these upper bounds using greedy embedding arguments, which seem primarily suited for embedding digraphs $H$ without long paths.

Finally, we recall from the introduction the \emph{directed Ramsey number} $\overleftrightarrow{r_k}(H)$, which is defined as the least $N$ such that every $k$-coloring of the edges of $\overleftrightarrow{K_N}$ contains a monochromatic copy of $H$. It is easy to see that $\ovar{r_k}(H) \geq \overleftrightarrow{r_k}(H)$ for any $k$ and any acyclic $H$. Indeed, if $N=\ovar{r_k}(H)$, then given a $k$-edge-coloring of $\overleftrightarrow{K_N}$, we may ignore one edge from each anti-parallel pair to obtain a $k$-edge-colored $N$-vertex tournament, which contains a monochromatic $H$. There is also an inequality in the other direction, whose proof is identical to that given in Lemma \ref{lem:ordered-to-oriented}, and which states that $\ovar{r_k}(H) \leq \overleftrightarrow{r_{2k}}(H,\dots,H, H',\dots,H')$, where there are $k$ copies of $H$ and $k$ of $H'$, and $H'$ is obtained from $H$ by reversing all the edges. Thanks to these connections, one can convert many of our results on oriented Ramsey numbers to results on directed Ramsey numbers. For example, Theorem \ref{thm:multicolor-upper-bound} immediately implies a quasi-polynomial upper bound on $\overleftrightarrow{r_k}(H)$ for any bounded-degree acyclic digraph $H$. In the other direction, since reversing the edges of any interval mesh yields another interval mesh, Theorem \ref{thm:general-lower-bound} shows the existence of a bounded-degree acyclic digraph $H$ with $\overleftrightarrow{r_2}(H)$ which grows faster than any fixed polynomial in the number of vertices of $H$. For $k \geq 4$ colors, we can similarly use Theorem \ref{thm:multicolor-lower-bound} to produce a bounded-degree acyclic digraph $H$ such that $\overleftrightarrow{r_k}(H)$ grows super-polynomially. However, there is an interesting intermediate case at $k=3$ colors, and we end with the following conjecture, which may be easier than Conjecture \ref{conj:super-poly}.
\begin{conj}
	There is an absolute constant $\Delta$ and an infinite sequence $\{H_n\}$ of $n$-vertex acyclic digraphs with maximum degree at most $\Delta$ and $\overleftrightarrow{r_3}(H_n) \geq n^{\omega(1)}$.
\end{conj}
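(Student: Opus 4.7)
The plan is to construct a bounded-degree $H_n$ and, for some $N = n^{\omega(1)}$, exhibit a $3$-coloring of $\overleftrightarrow{K_N}$ with no monochromatic $H_n$, following the broad strategy of Theorem~\ref{thm:multicolor-lower-bound}. Let $H_n$ be the $n$-vertex digraph consisting of a directed Hamiltonian path $v_1 \to v_2 \to \cdots \to v_n$ together with the forward edges of a uniformly random perfect matching $M$; then $H_n$ has maximum degree at most $3$, and by Theorem~\ref{thm:scrambled-matchings} we have $r_<(M;2) \geq n^{\Omega(\log n/\log\log n)}$ w.h.p.

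My first attempt would mirror Lemma~\ref{lem:hamiltonian-to-ordered}: take a $2$-coloring $\chi: E(K_N) \to \{1,2\}$ with no ordered monochromatic copy of $M$ (and hence none of $H_n^+$) in either color, and color $\overleftrightarrow{K_N}$ by giving the forward edge $(i,j)$ (with $i<j$) the color $\chi(i,j)$ and all backward edges the new color $3$. Monochromatic $H_n$ in colors $1$ or $2$ is then ruled out, but any order-reversing embedding of $H_n$ is monochromatic in color $3$; so the backward edges must use more than one color.

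The refinement I propose is to introduce a tournament $T$ on $[N]$ that avoids $H_n^{-1}$ as a subdigraph, constructed in the style of the lexicographic-product tournaments $R^m$ of Section~\ref{sec:Lower-bounds}. Color each backward edge $(j,i)$ with $i<j$ by color $3$ precisely when $(j,i)\in T$, and otherwise assign it one of the colors in $\{1,2\}$ in coordination with $\chi$. Then a monochromatic $H_n$ in color $3$ is an embedding of $H_n^{-1}$ into $T$, which $T$ avoids by construction. What remains is to rule out monochromatic copies of $H_n$ in colors $1$ and $2$; any such copy uses some forward edges and some backward edges (from $T^{-1}$), i.e.\ it is a \emph{mixed} embedding.

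The mixed case will be the main obstacle. Handling it is likely to require exploiting the lex-product structure of $T$ via a walk-style argument in the spirit of Theorem~\ref{thm:general-lower-bound} and Lemma~\ref{lem:Rfs}, showing that a mixed embedding of $H_n$ into the constructed color class would force the existence of an excessively long $(R,f,s)$-walk in $R$. Combining the ordered-Ramsey lower bound for scrambled matchings (which controls monotone embeddings) with such an interval-mesh-style walk bound (which would control mixed embeddings) is the central difficulty. An alternative route would be to abandon the tournament decomposition entirely and construct a multiscale $3$-coloring of $\overleftrightarrow{K_N}$ directly, partitioning edges by dyadic length scales and distributing the three colors so that each color class admits a walk-style argument forbidding $H_n$. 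Either route would represent a genuine advance, which explains why the conjecture remains open even though the authors suggest it may be easier than Conjecture~\ref{conj:super-poly}.
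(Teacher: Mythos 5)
This statement is an open conjecture in the paper, not a proven theorem; the authors give no proof, and indeed they explicitly remark that it ``may be easier than Conjecture~\ref{conj:super-poly}'' precisely because it remains unresolved. There is therefore nothing in the paper to compare your argument against, and your submission is correctly framed as a plan rather than a proof.

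That said, it is worth being explicit about where the plan is incomplete. Your first attempt (coloring all forward edges by $\chi$ and all backward edges by a single new color $3$) fails because an order-reversing embedding of $H_n$ lands entirely in color $3$. Your refinement picks out a tournament $T$ avoiding $H_n^{-1}$ and restricts color $3$ to the backward edges in $T$, but this pushes the remaining backward edges of $\overleftrightarrow{K_N}$ (those of $T^{-1}$) into colors $1$ and $2$. Now colors $1$ and $2$ contain both forward and backward edges, and the ordered-Ramsey lower bound for $M$ only controls order-preserving embeddings. A monochromatic copy of $H_n$ in color $1$ could map some edges of $H_n$ to forward edges colored by $\chi$ and other edges of $H_n$ to backward edges of $T^{-1}$ that happen to also have color $1$. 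Nothing in your construction couples the choice of $\chi$ with the choice of $T$ in a way that forbids such a mixed embedding, and this is exactly the ``genuine obstacle'' you flag. The walk-style argument of Lemma~\ref{lem:Rfs} is tailored to pure tournament embeddings of an interval mesh, and it is not at all clear how to adapt it to constrain a hybrid object that is partly governed by a $2$-coloring of a complete ordered graph and partly by the lexicographic-power tournament; your alternative suggestion (a direct multiscale $3$-coloring) is equally speculative. So the proposal correctly identifies the difficulty but does not close the gap, and the conjecture remains open.
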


\vspace{3mm}

\noindent {\bf Acknowledgments.} We would like to thank Jasmine Yan for producing Figure~\ref{fig:greedy-embedding}, and the anonymous referee for many helpful comments.


\begin{thebibliography}{10}
\bibitem{BaCiKrKy} M. Balko, J. Cibulka, K. Kr\'al, and J. Kyn\v{c}l,
Ramsey numbers of ordered graphs, \emph{Electron. J. Comb. }\textbf{27
}(2020), P1.16.

\bibitem{Bermond} J.-C. Bermond, Some Ramsey numbers for directed graphs, \emph{Discrete Math.} \textbf{9} (1974), 313--321.

\bibitem{Bollobas} B. Bollob\' as, A probabilistic proof of an asymptotic
formula for the number of labelled regular graphs, \emph{European
J. Combin}. \textbf{1} (1980), 311--316.

\bibitem{BuLeSu} M. Buci\' c, S. Letzter, and B. Sudakov, Directed
Ramsey number for trees, \emph{J. Combin. Theory Ser. B} \textbf{137
}(2019), 145--177.

\bibitem{Burr} S. A. Burr, An inequality involving the vertex arboricity and edge arboricity of a graph, \emph{J. Graph Theory} \textbf{10} (1986), 403--404.

\bibitem{BuEr} S. A. Burr and P. Erd\H{o}s, On the magnitude of
generalized Ramsey numbers for graphs, in: \emph{Infinite and Finite
Sets I}, Colloq. Math. Soc Janos Bolyai 10, North-Holland, Amsterdam
(1975), 214--240.

\bibitem{Chvatal}V. Chv\' atal, Monochromatic paths in edge-colored
graphs, \emph{J. Combin. Theory Ser. B }\textbf{13} (1972), 69--70.

\bibitem{ChRoSzTr} V. Chv\'atal, V. R\"odl, E. Szemer\'edi and W. T. Trotter Jr., The Ramsey number of a graph with
bounded maximum degree, \emph{J. Combin. Theory Ser. B} \textbf{34} (1983), 239--243.

\bibitem{Conlon09} D. Conlon, A new upper bound for diagonal Ramsey numbers, \emph{Ann. of Math.} \textbf{170} (2009), 941--960.

\bibitem{Conlon} D. Conlon, The Ramsey number of dense graphs, \emph{Bull. Lond. Math. Soc.} \textbf{45} (2013), 483--496.

\bibitem{CoFoLeSu} D. Conlon, J. Fox, C. Lee, and B. Sudakov, Ordered
Ramsey numbers,\emph{ J. Combin. Theory Ser. B}\textbf{ 122} (2017),
353--383.

\bibitem{CoFoSu} D. Conlon, J. Fox, and B. Sudakov, Recent developments in graph Ramsey theory, \emph{London Math. Soc. Lecture Note Ser.} \textbf{424} (2015), 49--118.

\bibitem{CoFoSuII} D. Conlon, J. Fox, and B. Sudakov, Short proofs of some extremal results II, \emph{J. Combin. Theory Ser. B} \textbf{121} (2016), 173--196.

\bibitem{Draganicetal}N. Dragani\'c, F. Dross, J. Fox, A. Gir\~ao,
F. Havet, D. Kor\'andi, W. Lochet, D. Munh\'a Correia, A. Scott, and B.
Sudakov, Powers of paths in tournaments, {\it Combin. Probab. Comput.} \textbf{30} (2021), 894--898.

\bibitem{DrHa} F. Dross and F. Havet, On the unavoidability of oriented trees, \emph{Electron. Notes Theor. Comput. Sci.} \textbf{346} (2019), 425--436.

\bibitem{ElSahili}A. El Sahili, Trees in tournaments, \emph{J. Combin.
Theory Ser. B }\textbf{92} (2004), 183--187.

\bibitem{ErHa}P. Erd\H os and A. Hajnal, Ramsey-type theorems, \emph{Discrete Appl. Math.} \textbf{25} (1989), 37--52.

\bibitem{ErMo}P. Erd\H os and L. Moser, On the representation of
directed graphs as unions of orderings, \emph{Publ. Math. Inst. Hung.
Acad. Sci., Ser. A} \textbf{9} (1964), 125--132. 

\bibitem{FoSu} J. Fox and B. Sudakov, Two remarks on the Burr--Erd\H os conjecture,
\emph{European J. Combin.} \textbf{30} (2009), 1630--1645.

\bibitem{Gallai}T. Gallai, On directed paths and circuits, \emph{Theory
of graphs (Proc. Colloq., Tihany, 1966)} (1968), 115--118.


\bibitem{GrRoRu} R. L. Graham, V. R\"odl, and A. Ruci\'nski, On graphs with linear Ramsey numbers, \emph{J. Graph Theory} \textbf{35} (2000), 176--192.

\bibitem{GyLe}A. Gy\'arf\'as and J. Lehel, A Ramsey-type problem
in directed and bipartite graphs, \emph{Period. Math. Hungar. }\textbf{3}
(1973), 299--304.

\bibitem{HagTh}R. H\"aggkvist and A. Thomason, Trees in tournaments,
\emph{Combinatorica} \textbf{11 }(1991), 123--130.

\bibitem{Hasse}M. Hasse, Zur algebraischen Begr\"undung der Graphentheorie.
I, \emph{Mathematische Nachrichten} \textbf{28} (1965), 275--290.

\bibitem{Havet}F. Havet, Trees in tournaments, \emph{Discr. Math.} \textbf{243} (2002), 121--134.

\bibitem{HavTh}F. Havet and S. Thomass\'e, Median orders of tournaments:
A tool for the second neighborhood problem and Sumner's conjecture,
\emph{J. Graph Theory} \textbf{35} (2000), 244--256.

\bibitem{JaLuRu} S. Janson, T. \L{}uczak, and A. Ruci\'nski, \emph{Random graphs}, Wiley, New York, 2000.

\bibitem{KoSu} A. Kostochka and B. Sudakov, On Ramsey numbers of sparse graphs, \emph{Combin. Probab. Comput.} \textbf{12} (2003), 627--641.

\bibitem{KuMyOs} D. K\"uhn, R. Mycroft, and D. Osthus, A proof of
Sumner's universal tournament conjecture for large tournaments, \emph{Proc.
Lond. Math. Soc. (3) }\textbf{102 }(2011), 731--766.

\bibitem{Lee} C. Lee, Ramsey numbers of degenerate graphs, \emph{Ann.
of Math. }\textbf{185 }(2017), 791--829.

\bibitem{Lovasz} L. Lov\'asz, \emph{Graphs and Geometry}, American Mathematical Society Colloquium Publications, American Mathematical Society, Providence, RI, 2019.

\bibitem{Roy}B. Roy, Nombre chromatique et plus longs chemins d'un
graphe, \emph{Rev. Fran\c caise Informat. Recherche Op\'erationnelle}
\textbf{1} (1967), 129--132.

\bibitem{Sah} A. Sah, Diagonal Ramsey via effective quasirandomness, 2020. Preprint available at arXiv:2005.09251.

\bibitem{Spencer} J. Spencer, Ramsey's theorem---a new lower bound, \emph{J. Combin. Theory Ser. A} \textbf{18} (1975), 108--115.

\bibitem{Stearns} R. Stearns, The voting problem, \emph{Amer. Math.
Monthly} \textbf{66} (1959), 761--763.

\bibitem{Sudakov} B. Sudakov, A conjecture of Erd\H os on graph Ramsey numbers, \emph{Adv. Math.} \textbf{227} (2011), 601--609.

\bibitem{Thomason}A. Thomason, Paths and cycles in tournaments, \emph{Trans.
Amer. Math. Soc.} \textbf{296} (1986), 167--180.

\bibitem{Vitaver} L. M. Vitaver, Determination of minimal coloring
of vertices of a graph by means of Boolean powers of the incidence
matrix, \emph{Dokl. Akad. Nauk SSSR} \textbf{147} (1962), 728.

\bibitem{Wormald}N. C. Wormald, Models of random regular graphs.
In \emph{Surveys in combinatorics,} 1999 (Canterbury), volume 267
of \emph{London Math. Soc. Lecture Note Ser.}, pages 239--298. Cambridge
Univ. Press, Cambridge, 1999. 

\bibitem{Yuster}R. Yuster, Paths with many shortcuts in tournaments,
\emph{Discrete Math.} \textbf{334} (2021), 112--168.
\end{thebibliography}
\end{document}